\title[Hessian stability and convergence rates for entropic and Sinkhorn potentials]{Hessian stability and convergence rates for entropic and Sinkhorn potentials via semiconcavity}
\author{Giacomo Greco}
\address{Università degli studi di Roma Tor Vergata}
\curraddr{RoMaDS - Department of Mathematics, 00133RM Rome, Italy.}
\email{greco@mat.uniroma2.it}
\thanks{We warmly thank Alberto Chiarini and Giovanni Conforti for suggesting this problem and for many useful discussions. Both authors are associated to INdAM (Istituto Nazionale di Alta
Matematica ``Francesco Severi'') and the group GNAMPA.
GG is supported by the PRIN project GRAFIA (CUP:
E53D23005530006) and MUR Departement of Excellence Programme 2023-2027 MatMod@Tov (CUP: E83C23000330006). GG is thankful to Università Cattolica del Sacro Cuore for its hospitality and where part of this work has been carried out. LT acknowledges financial support by the INdAM-GNAMPA Project 2024 ``Mancanza di regolarità e spazi non lisci: studio di autofunzioni e autovalori'' (CUP: E53C23001670001).}
\author{Luca Tamanini}
\address{Università Cattolica del Sacro Cuore}
\curraddr{Dipartimento di Matematica e Fisica ``Niccol\`o Tartaglia'', I-25133 Brescia, Italy.}
\email{luca.tamanini@unicatt.it}
\DeclareMathAlphabet{\mathpzc}{OT1}{pzc}{m}{it}
\numberwithin{equation}{section}
\theoremstyle{plain}
\newtheorem{theorem}{Theorem}[section]
\crefname{theorem}{theorem}{Theorems}
\Crefname{Theorem}{Theorem}{Theorems}
\newaliascnt{lemma}{theorem}
\newtheorem{lemma}[lemma]{Lemma}
\crefname{lemma}{lemma}{lemmas}
\Crefname{Lemma}{Lemma}{Lemmas}
\newaliascnt{corollary}{theorem}
\newtheorem{corollary}[corollary]{Corollary}
\crefname{corollary}{corollary}{corollaries}
\Crefname{Corollary}{Corollary}{Corollaries}
\newaliascnt{proposition}{theorem}
\newtheorem{proposition}[proposition]{Proposition}
\crefname{proposition}{proposition}{propositions}
\Crefname{Proposition}{Proposition}{Propositions}
\theoremstyle{definition}
\newtheorem{definition}{Definition}
\crefname{definition}{definition}{definitions}
\Crefname{Definition}{Definition}{Definitions}
\newtheorem{remark}[definition]{Remark}
\crefname{remark}{remark}{remarks}
\Crefname{Remark}{Remark}{Remarks}
\crefname{example}{example}{examples}
\Crefname{Example}{Example}{Examples}
\newtheorem{assumption}{\textbf{H}\hspace{-3pt}}
\Crefname{assumption}{\textbf{H}\hspace{-3pt}}{\textbf{H}\hspace{-3pt}}
\crefname{assumption}{\textbf{H}}{\textbf{H}}
\Crefname{assumptionA}{\textbf{A}\hspace{-3pt}}{\textbf{A}\hspace{-3pt}}
\crefname{assumptionA}{\textbf{A}}{\textbf{A}}
\newcommand{\bes}{\begin{equation*}}
\newcommand{\ees}{\end{equation*}}
\newcommand{\beas}{\begin{eqnarray}}
\newcommand{\eeas}{\end{eqnarray}}
\newcommand{\bea}{\begin{eqnarray}}
\newcommand{\eea}{\end{eqnarray}}
\newcommand{\be}{\begin{equation}}
\newcommand{\ee}{\end{equation}}
\newcommand{\bei}{\begin{itemize}}
\newcommand{\eei}{\end{itemize}}
\newcommand{\bec}{\begin{cases}}
\newcommand{\eec}{\end{cases}}
\newcommand{\ben}{\begin{enumerate}}
\newcommand{\een}{\end{enumerate}}
\DeclareMathOperator{\Tr}{Tr}
\DeclareMathOperator{\supp}{supp}
\def\PE{\mathbb{E}}
\def\txts{\textstyle}
\def\bfW{\mathbf{W}}
\def\rmL{\mathrm{L}}
\newcommand{\IND}{\mathbf{1}}
\newcommand{\bbR}{\mathbb{R}}
\newcommand{\bbRD}{\mathbb{R}^d}
\newcommand{\N}{\mathbb{N}}
\newcommand{\cP}{\mathcal{P}}
\newcommand{\cT}{\mathcal{T}}
\newcommand{\cL}{\mathcal{L}}
\newcommand{\scrH}{\mathscr{H}}
\newcommand{\bbP}{\mathbb{P}}
\newcommand{\bbE}{\mathbb{E}}
\newcommand{\bbl}{\begin{block}}
\newcommand{\ebl}{\end{block}}
\newcommand{\De}{\mathrm{d}}
\newcommandx{\Vnorm}[2][1=V]{\| #2 \|_{#1}}
\newcommandx{\norm}[2][1=]{\ifthenelse{\equal{#1}{}}{\left\Vert #2 \right\Vert}{\left\Vert #2 \right\Vert^{#1}}}
\newcommandx{\normLigne}[2][1=]{\ifthenelse{\equal{#1}{}}{\Vert #2 \Vert}{\Vert #2\Vert^{#1}}}
\def\rset{\mathbb{R}}
\def\nset{\mathbb{N}}
\def\ie{\textit{i.e.}}
\def\eqsp{\;}
\def\rmd{\mathrm{d}}
\def\Leb{\mathrm{Leb}}
\def\Leb{\mathrm{Leb}}
\newcommand{\Lam}[1]{\Lambda(#1)}
\newcommand{\Cphi}{C^{\varphi^\nu}} 
\newcommand{\Cpsi}{C^{\psi^\nu}}
\newcommand{\Crhonu}{C_{\rho\nu}}
\newcommand{\Crhomu}{C_{\rho\mu}}
\newcommand{\HS}[1]{\|#1\|_{\mathrm{HS}}}
\newcommand{\Kappadel}{K_{\delta'\delta}^{\rho\nu}}
\newcommand{\Kappadelmu}{K_{\delta'\delta}^{\rho\mu}}
\newcommand{\SymIpsi}[3]{\mathcal{I}^{#3}(#1,#2)}
\begin{document}

\begin{abstract}
In this paper we determine quantitative stability bounds for the Hessian of entropic potentials, \ie, the dual solution to the entropic optimal transport problem. To the authors' knowledge this is the first work addressing this second-order quantitative stability estimate in general unbounded settings. Our proof strategy relies on semiconcavity properties of entropic potentials and on the representation of entropic transport plans as laws of forward and backward diffusion processes, known as Schr\"odinger bridges. Moreover, our approach allows to deduce a stochastic proof of quantitative stability estimates for entropic transport plans and for gradients of entropic potentials as well. Finally, as a direct consequence of these stability bounds, we deduce exponential convergence rates for gradient and Hessian of Sinkhorn iterates along Sinkhorn's algorithm, a problem that was still open in unbounded settings. Our rates have a polynomial dependence on the regularization parameter.

\medskip

\noindent\textbf{Keywords:}  Entropic Optimal Transport; Hamilton-Jacobi-Bellman; Hessian stability; Schr\"odinger bridges; Sinkhorn's algorithm
\end{abstract}

\maketitle



\section{Introduction}
Given two probability measures $\rho,\,\mu \in \cP(\mathbb{R}^d)$ and a regularization parameter $T>0$, the Entropic Optimal Transport problem (EOT henceforth) reads as
\bes
\text{minimize } \int_{\mathbb{R}^d \times \mathbb{R}^d}
\frac{|x-y|^2}{2}\,\De\pi+T\,\scrH(\pi|\rho\otimes\mu) \text{
  under the constraint } \pi\in\Pi(\rho,\mu)\eqsp,
\ees
where $\scrH$ denotes the relative entropy functional (aka Kullback--Leibler divergence) and $\Pi(\rho,\mu)$ is the set of couplings of $\rho$ and $\mu$. This problem can be seen as an entropic regularization of the Optimal Transport (OT) problem, which indeed is recovered in the limit case $T=0$. For this reason, EOT has been widely studied in the last years and the solutions to its primal and dual formulation are respectively used as proxies for optimal transport plans and Brenier's optimal transport map \cite{Mikami04, BerntonGhosalNutz, nutz2021entropic, lagg2022gradient}. Lastly, EOT is equivalent to a statistical mechanics problem, known as the Schr\"odinger problem, introduced in \cite{Schr,Schr32} where E.\ Schr\"odinger was interested in the most likely evolution of a cloud of Brownian particles, conditionally to its initial and final distribution at time $s=0$ and $s=T$ respectively. Therefore EOT has a cutting-edge nature that lies at the interface between analysis and stochastics. Moreover, this problem has recently gained more popularity due to its use in machine learning and generative modeling applications \cite{bortoli2021diffusion,wang2021DeepGenviaSB,shi2022ConditionalSimviaSB}, mainly due to the possibility of solving EOT via an iterative algorithm, known as \emph{Sinkhorn's algorithm} \cite{Sinkhorn64,SinkhornKnopp67} or \emph{Iterative Proportional Fitting Procedure}, which can be used to quickly obtain approximate solutions for EOT \cite{cuturi2013sinkhorn} in a much easier and faster way, compared to standard OT solvers.

\medskip

In this article, we are interested in analyzing how changes in the marginals $\rho,\mu$ affect solutions to EOT. By relying on semiconcavity bounds and stochastic calculus, we are going to show below quantitative stability estimates for EOT potentials up to the second order, namely for their gradient and Hessian. To the best of our knowledge, this is the first work where second-order quantitative stability estimates are obtained. This is even more remarkable when compared with unregularized optimal transport, where higher-order quantitative stability estimates are more difficult to obtain, as in general potentials may lack regularity and the Ma--Trudinger--Wang condition \cite{MTW05} is imposed in order to ensure it; without this demanding assumption, only first-order quantitative stability bounds are available in general (see for instance the very recent \cite{letrouit2024gluingmethodsquantitativestability,kitagawa2025stabilityoptimaltransportmaps} and references therein). On the contrary, our main stability theorem is valid under fairly general assumptions, significantly weaker than the Ma--Trudinger--Wang condition, and since EOT is used as a proxy for OT, this highlights the importance of our result.

\medskip

In order to continue the exposition and state clearly our main contributions, let us collect a few basic facts about EOT and its solutions. First, let us recall that under mild assumptions on the marginals $\rho,\mu$ (see for instance \cite[Proposition 2.2]{lagg2022gradient}), EOT admits a unique minimizer $\pi^\mu\in\Pi(\rho,\mu)$, referred to as the \emph{entropic plan} (or Schr\"odinger plan), and there exist two functions $\varphi^\mu \in \mathrm{L}^1(\rho)$ and $\psi^\mu \in \mathrm{L}^1(\mu)$, called \emph{entropic potentials}, such that
\bes
\pi^\mu(\De x\De y)=(2\pi
T)^{-d/2}\,\exp\biggl(-\frac{|x-y|^2}{2T}-\varphi^\mu(x)-\psi^\mu(y)\biggr)\,\De
x\,\De y\eqsp.
\ees
Both the optimal plan $\pi^\mu$ and the entropic potentials $\varphi^\mu,\psi^\mu$ depend on $T$ and on $\rho$, but for ease of notation we omit this dependence, as $T$ and $\rho$ will be kept fixed throughout the whole manuscript, whereas we are interested in stability bounds for changes in the second marginal in EOT. The pair $(\varphi^\mu,\psi^\mu)$ is unique up to constant translations $a\mapsto (\varphi^\mu+a,\psi^\mu -a)$ and it is characterized as solution to a system of equations. Indeed, if we suppose that the marginals admit densities of the form
\bes
\rho(\De
x)=\exp(-U_\rho(x))\De x\,,\qquad\mu(\De y)=\exp(-U_\mu(y))\De y\eqsp,
\ees
then, imposing that $\pi^\mu\in\Pi(\rho,\mu)$ one finds that $\varphi^\mu,\psi^\mu$ solve the following system of implicit functional equations, known as \emph{Schr\"odinger system}
\be\label{schrodinger:system}
   \varphi^\mu= U_\rho + \log P_T \exp(-\psi^\mu)\,,\qquad
   \psi^\mu= U_\mu + \log P_T \exp(-\varphi^\mu)\eqsp,
\ee
where $(P_s)_{s\geq 0}$ is the Markov semigroup generated by the standard $d$-dimensional Brownian motion $(B_s)_{s \geq 0}$, defined as $P_sf(x) = \PE[f(x+B_s)]$ for any non-negative measurable function $f:\rset^d\to \rset$. 

The structure of the Schr\"odinger system motivates the introduction of the ``interpolated potentials''
\bes
\varphi^\mu_s = -\log P_{T-s}\exp(-\varphi^\mu)\,, \qquad \psi^\mu_s = -\log P_{T-s}\exp(-\psi^\mu) \,.
\ees
It is easily seen that they are solutions to the backward Hamilton--Jacobi--Bellman equation  
\be\tag{HJB}\label{eq:HJB}
\partial_s u_s+\frac12\Delta u_s-\frac12|\nabla u_s|^2=0
\ee
with final conditions $u_T = \varphi^\mu$ and $u_T = \psi^\mu$ respectively. Such a PDE enjoys a fundamental property of backpropagation of convexity (see \Cref{lemma:backpropagation:convexity} in the Appendix) and this has recently been employed in a stochastic analysis framework in order to prove convexity/concavity estimates for entropic potentials in \cite{conforti2024weak}, providing an entropic version of the celebrated Caffarelli Theorem for Lipschitzianity of transport maps (see also \cite{chewi2022entropic,fathi2019proof} for a non-stochastic proof).
As shown in \cite{conforti2023Sinkhorn,chiarini2024semiconcavity}, semiconcavity estimates play a pivotal role in establishing entropic quantitative stability results. In this work, we continue the research line started there, where semiconcavity was used for entropic stability of entropic plans and exponential convergence of Sinkhorn's algorithm; here, we focus on quantitative stability bounds for gradient and Hessian of entropic potentials. For these reasons, let us  introduce the notion of semiconcavity that we employ in our paper. We say that a function $f:\bbRD\rightarrow \mathbb{R}$ is $\Lambda$-semiconcave if for all $z,y\in \bbRD$ we have
\be\label{def:Lsemiconcave}
f(z)-f(y)\leq \langle\nabla f(y),\,z-y\rangle+\frac{\Lambda}{2}\,|z-y|^2\,.
\ee
As already observed in \cite{conforti2023Sinkhorn,chiarini2024semiconcavity}, a crucial role is played by the semiconcavity of the function
\be\label{eq:def:costo+funzione}
g^y_h(z)\coloneqq \frac{|z-y|^2}{2}-T\,h(z)
\ee
where $h\in\{\varphi^\mu_0,\,\psi^\mu_0\}$ is a backpropagated entropic potential along~\ref{eq:HJB}.
We will denote with $\Lam{h}$ a semiconcavity parameter of $g^y_h$ (uniform in $y$). To be more precise, in our examples and in the explicit computations we will fix a parameter $\Lambda\in\rset$ such that~\eqref{def:Lsemiconcave} holds. We do not assume it to be the optimal parameter choice.

\medskip

We are now ready to state our main assumptions and results:

\begin{assumption}\label{ass:basic}
    Let us assume that $\rho,\,\mu\in \cP_2(\mathbb{R}^d)$ have finite relative entropy, namely $\scrH(\rho|\Leb) < +\infty$ and $\scrH(\mu|\Leb) < +\infty$.
\end{assumption}

This first assumption is standard in EOT when considering its Schr\"odinger problem formulation and it guarantees the existence and uniqueness of optimal plan, entropic potentials as well as the validity of the stochastic representation via forward-backward Schr\"odinger bridge processes, as described in \Cref{sec:SB:processes} below. The second assumption is needed when introducing a different marginal $\nu\in\cP(\bbRD)$.

\begin{assumption}\label{ass:mu}
    Assume that $\nu\in \cP_2(\mathbb{R}^d)$ has finite relative entropy, namely $\scrH(\nu|\Leb) < +\infty$. Moreover, let us assume that: (a) either $\scrH(\mu|\nu)<+\infty$; (b) or $\mu\ll\nu$ and $\Lam{\varphi^\mu_0}$ is finite.
\end{assumption}

\begin{remark}
Let us stress that, despite that the finiteness of $\Lam{\varphi^\mu_0}$ in \Cref{ass:mu} may seem as a condition on $\mu$, there exist sufficient conditions on $\rho$ that ensure its validity without any extra assumption on $\mu$. For instance, the compactness of the support of $\rho$ or the log-concavity of its Radon--Nikod\'ym derivative, as shown by the computations performed in the Appendix.
\end{remark}

Under these assumptions, we will prove a general Hessian (and gradient) stability result which builds upon semiconcavity estimates for $\Lam{\varphi^\nu_0}$. In order to show its wide validity, we will further specialize these general estimates in two landmark examples: compactly supported and log-concave marginals. By building upon estimates obtained in \cite{chiarini2024semiconcavity} our quantitative stability estimates could be applied to weakly log-concave marginals or could be further specialized to the more regular Caffarelli's setting (namely when the Hessian of marginals' log-densities are both upper and lower bounded). For sake of exposition, we have omitted these two applications where the constants are less readable. In what follows, whenever we write that a constant depends polynomially on a measure $\rho\in\cP(\bbRD)$, we mean that it depends at most polynomially on the geometric parameters of $\rho$, such as the diameter of the support or the log-concavity parameter.
Our main stability result reads as follows.

\begin{theorem}[Informal main result]\label{thm:intro}
Assume \Cref{ass:basic} and \Cref{ass:mu}. We have
    \bes 
\|\nabla\varphi^\nu-\nabla\varphi^\mu \|^2_{\rmL^2(\rho)} \lesssim \bfW_2^2(\mu,\nu)\quad\text{ and }\quad
    \| \nabla^2 \varphi^\mu -  \nabla^2 \varphi^\nu\|_{\rmL^1(\rho)} \lesssim 
  \bfW_2(\mu,\nu) + \bfW_2^2(\mu,\nu)\,,
    \ees
up to multiplicative constants that depend polynomially only on $\rho,\,\nu,\,T$ (and not on $\mu$), and are explicit. Moreover, if we specify our result to the following settings we deduce that
\begin{itemize}
        \item If \Cref{ass:basic} holds, $\scrH(\nu|\Leb)<\infty$, $\supp(\rho),\,\supp(\nu)\subseteq B_R(0)$ (for some radius big enough, \ie, $R^2\geq T$) and either $\mu\ll\nu$ or $\supp(\mu)\subseteq B_R(0)$, then 
        \bes 
        \begin{aligned}
            &\|\nabla\varphi^\nu-\nabla\varphi^\mu \|^2_{\rmL^2(\rho)} \lesssim\nicefrac{R^4}{T^4}\,\bfW_2^2(\mu,\nu)\,,\\
    &\| \nabla^2 \varphi^\mu -  \nabla^2 \varphi^\nu\|_{\rmL^1(\rho)} \lesssim
   (\nicefrac{R^4}{T^{\nicefrac72}}+\nicefrac{d}{T})\,\bfW_2(\mu,\nu)+
   \nicefrac{R^6}{T^5}\,\bfW_2^2(\mu,\nu)\,, 
   \end{aligned}
    \ees
    \item If \Cref{ass:basic} holds, $\scrH(\nu|\Leb)<\infty$, and both $\rho$ and $\nu$ are log-concave, \ie, their (negative) log-densities satisfy  $
 \nabla^2U_\rho\geq \alpha_\rho$  and $\nabla^2U_\nu\geq \alpha_\nu$ for some  $\alpha_\rho,\,\alpha_\nu>0$ (wlog such that $\alpha_\rho\vee\alpha_\nu<T^{-1}$), then 
   \bes 
        \begin{aligned}
            &\|\nabla\varphi^\nu-\nabla\varphi^\mu \|^2_{\rmL^2(\rho)} \lesssim \frac{1}{\alpha_\rho\,\alpha_\nu\,T^4}\,\bfW_2^2(\mu,\nu)\,,\\
    &\| \nabla^2 \varphi^\mu -  \nabla^2 \varphi^\nu\|_{\rmL^1(\rho)} \lesssim
   \biggl(\frac1{\alpha_\nu\,\sqrt{\alpha_\rho}\,T^3}+\frac{d}{\sqrt{\alpha_\rho\,\alpha_\nu}\,T^2}\biggr)\,\bfW_2(\mu,\nu)+
   \frac1{\alpha_\rho\,\alpha_\nu\,T^4}\,\bfW_2^2(\mu,\nu)\,.
   \end{aligned}
    \ees
    \end{itemize}
\end{theorem}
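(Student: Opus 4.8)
The plan is to reduce the whole statement to the first- and second-order stability of the heat-regularised potentials $\psi^\mu_0=-\log P_T\exp(-\psi^\mu)$. The second line of the Schr\"odinger system~\eqref{schrodinger:system} is exactly $\varphi^\mu=U_\rho-\psi^\mu_0$, so, $\rho$ being fixed, $\nabla\varphi^\nu-\nabla\varphi^\mu=\nabla\psi^\mu_0-\nabla\psi^\nu_0$ and likewise for the Hessians. Differentiating $e^{-\psi^\mu_0(x)}=(2\pi T)^{-d/2}\!\int\!\exp(-\tfrac{|x-y|^2}{2T}-\psi^\mu(y))\,\rmd y$ under the integral (legitimate by the smoothing of $P_T$) gives
\bes
\nabla\psi^\mu_0(x)=\tfrac1T\bigl(x-Y^\mu(x)\bigr),\qquad \nabla^2\psi^\mu_0(x)=\tfrac1T\Id-\tfrac1{T^2}\,\Sigma^\mu(x),
\ees
where $Y^\mu(x),\Sigma^\mu(x)$ are the mean and the covariance of $\pi^\mu(\rmd y|x)\propto\exp(-\tfrac{|x-y|^2}{2T}-\psi^\mu(y))\,\rmd y$. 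By \Cref{sec:SB:processes}, $\pi^\mu(\cdot|x)$ is the law of $X_T$ given $X_0=x$ for the forward Schr\"odinger bridge $\rmd X_s=-\nabla\psi^\mu_s(X_s)\,\rmd s+\rmd B_s$ from $\rho$ to $\mu$; moreover $\nabla^2 g^{y}_{\psi^\mu_0}=\tfrac1T\Sigma^\mu$ for every $y$, so semiconcavity of $g^{y}_{\psi^\mu_0}$ is equivalent to $0\le\Sigma^\mu\le T\,\Lam{\psi^\mu_0}\,\Id$, while $\int\Tr\Sigma^\mu\,\rmd\rho\le\int|y|^2\,\rmd\mu$ in any case. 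Hence the theorem is equivalent to the two integrated bounds $\int|Y^\mu-Y^\nu|^2\,\rmd\rho\lesssim\bfW_2^2(\mu,\nu)$ and $\int\HS{\Sigma^\mu-\Sigma^\nu}\,\rmd\rho\lesssim\bfW_2(\mu,\nu)+\bfW_2^2(\mu,\nu)$.

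Next I compare the conditional laws $\pi^\mu(\cdot|x),\pi^\nu(\cdot|x)$. They are mutually absolutely continuous with explicit density ratio
\bes
\frac{\rmd\pi^\mu(\cdot|x)}{\rmd\pi^\nu(\cdot|x)}(y)=\exp\bigl((\varphi^\nu-\varphi^\mu)(x)+(\psi^\nu-\psi^\mu)(y)\bigr)
\ees
(using again $\psi^\mu_0=U_\rho-\varphi^\mu$); equivalently, one may couple the two conditioned bridges by the same Brownian motion started at the same point, the resulting dissipation being controlled by the back-propagation of convexity along~\ref{eq:HJB} (\Cref{lemma:backpropagation:convexity}), which for the bridge associated with $\nu$ is governed at time $0$ by $\Lam{\psi^\nu_0}$ (one keeps the $\nu$-parameters since the final constants must not depend on $\mu$). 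Either way one obtains pointwise bounds of the form $|Y^\mu(x)-Y^\nu(x)|\lesssim\bigl(|\varphi^\mu-\varphi^\nu|(x)+\mathrm{osc}_{\,\pi^\nu(\cdot|x)}(\psi^\mu-\psi^\nu)\bigr)\cdot m_1(x)$ and, expanding $\Sigma^\mu-\Sigma^\nu=\int(y-Y^\mu)^{\otimes2}\rmd\pi^\mu(\cdot|x)-\int(y-Y^\nu)^{\otimes2}\rmd\pi^\nu(\cdot|x)$, a bound of the same type for $\HS{\Sigma^\mu(x)-\Sigma^\nu(x)}$ with a second moment $m_2(x)$ plus a purely quadratic remainder; the trace/Poincar\'e book-keeping in this second-order step is what introduces the dimensional factor $d$ into the Hessian estimate. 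Here the a priori covariance bound $\Sigma^\nu(x)\le T\,\Lam{\psi^\nu_0}\,\Id$ — i.e. semiconcavity again — and the moments of $\nu$ control $m_1,m_2$ after integration against $\rho$.

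The input still needed is the zeroth-order (entropic) stability of the potentials themselves, $|\varphi^\mu-\varphi^\nu|+|\psi^\mu-\psi^\nu|\lesssim\bfW_2(\mu,\nu)$ in the relevant weighted sense. This is where the genuine difficulty lies: since $\psi^\mu-\psi^\nu=(U_\mu-U_\nu)-(\varphi^\mu_0-\varphi^\nu_0)$ and $U_\mu-U_\nu$ is \emph{not} controlled by $\bfW_2(\mu,\nu)$, the stability of $\varphi$ and that of $\psi$ are circularly coupled. I would close the loop exactly as in \cite{conforti2023Sinkhorn,chiarini2024semiconcavity}, re-derived stochastically: a synchronous coupling of the forward, resp. backward, Schr\"odinger bridges, with forcing term the competitor plan produced by the $\bfW_2$-optimal coupling of $\mu$ and $\nu$; because the back-propagation of convexity makes both couplings contractive (with rates governed by $\Lam{\psi^\nu_0}$ and $\Lam{\varphi^\nu_0}$), the coupled system of inequalities closes and yields both the entropic stability and the needed potential bounds. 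For the gradient part one also has the clean shortcut
\bes
\int|Y^\mu-Y^\nu|^2\,\rmd\rho\le\int\bfW_2^2\bigl(\pi^\mu(\cdot|x),\pi^\nu(\cdot|x)\bigr)\,\rmd\rho(x)\le C\!\int\scrH\bigl(\pi^\mu(\cdot|x)\,\big|\,\pi^\nu(\cdot|x)\bigr)\,\rmd\rho(x)=C\,\scrH(\pi^\mu|\pi^\nu),
\ees
using that $\pi^\mu,\pi^\nu$ share the marginal $\rho$ (chain rule for relative entropy) and a transport--entropy inequality for $\pi^\nu(\cdot|x)$, with constant $C$ of order $\alpha_\nu^{-1}$ in the log-concave regime and governed by $\Lam{\psi^\nu_0}$ in general; then $\scrH(\pi^\mu|\pi^\nu)\lesssim\bfW_2^2(\mu,\nu)$ is itself the entropic stability estimate.

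Finally, the two displayed corollaries follow by substituting explicit semiconcavity parameters into the general bound: for $\supp\rho,\supp\nu\subseteq B_R(0)$ one has $\Lam{\varphi^\nu_0},\Lam{\psi^\nu_0}$ bounded by an explicit function of $R^2/T$ that is $\le 1$ once $R^2\ge T$, and for $\nabla^2U_\rho\ge\alpha_\rho$, $\nabla^2U_\nu\ge\alpha_\nu$ one gets $\Lam{\varphi^\nu_0}\le(1+\alpha_\rho T)^{-1}$, $\Lam{\psi^\nu_0}\le(1+\alpha_\nu T)^{-1}$ (see \cite{chiarini2024semiconcavity} and \Cref{sec:app:phi}); tracking the second moments of $\rho,\nu$ (bounded by $R$, resp. by $\alpha_\rho^{-1},\alpha_\nu^{-1}$) then produces the stated constants. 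The expected main obstacle is precisely the circular coupling between the $\varphi$- and $\psi$-side estimates: it is only the contractivity furnished by the back-propagation of convexity — hence by semiconcavity — that lets the coupled stability inequalities close, which is also what forces the constants to be polynomial in $1/T$ and to degenerate as $T\downarrow0$, the regime restrictions ($R^2\ge T$, $\alpha_\rho\vee\alpha_\nu<T^{-1}$) being exactly what guarantees this contractivity.
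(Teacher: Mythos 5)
There are two genuine gaps. First, your ``clean shortcut'' for the gradient bound hinges on the claim that $\scrH(\pi^\mu|\pi^\nu)\lesssim\bfW_2^2(\mu,\nu)$, but that is not the entropic stability estimate: what holds (\Cref{thm:entropic_stab}) is $\scrH(\pi^\mu|\pi^\nu)\leq\scrH(\mu|\nu)+\frac{\Lam{\varphi^\nu_0}}{2T}\bfW_2^2(\mu,\nu)$, and the term $\scrH(\mu|\nu)$ is not controlled by the Wasserstein distance — it can even be $+\infty$ under case (b) of \Cref{ass:mu}, and in any case would make the constant depend on $\mu$, contradicting the statement. Disintegrating over the shared first marginal $\rho$, as you do, gives exactly $\int\scrH(\pi^\mu(\cdot|x)|\pi^\nu(\cdot|x))\,\De\rho(x)=\scrH(\pi^\mu|\pi^\nu)$, so this route cannot avoid $\scrH(\mu|\nu)$. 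The quantity the paper bounds by $\bfW_2^2$ alone is the conditional entropy disintegrated over the \emph{second} marginal, $\bbE_\mu[\scrH(\pi^\mu(\cdot|X)|\pi^\nu(\cdot|X))]$ in \eqref{eq:bound:conditionals:W2} (which requires a nontrivial truncation argument when $\scrH(\mu|\nu)=\infty$), and for the $\rho$-side gradient it runs a Gr\"onwall argument for $Y^\theta$ only on $[0,\delta T]$ with $\delta<1$, pairing it with the stability of the time-$(1-\delta)T$ marginals (\Cref{cor:bound_marg_entropy}), which is finite precisely because the time is strictly positive. Relatedly, your transport–entropy inequality for $\pi^\nu(\cdot|x)$ with constant ``governed by $\Lam{\psi^\nu_0}$ in general'' is unjustified: semiconcavity gives an upper bound on $\nabla^2\psi^\nu$, not the uniform convexity of $y\mapsto\frac{|x-y|^2}{2T}+\psi^\nu(y)$ that Bakry–\'Emery/Otto–Villani would need outside the log-concave case.

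Second, your Hessian step is not closed. Expanding $\Sigma^\mu-\Sigma^\nu$ and invoking ``pointwise bounds'' of the type $|\varphi^\mu-\varphi^\nu|(x)+\mathrm{osc}_{\pi^\nu(\cdot|x)}(\psi^\mu-\psi^\nu)\lesssim\bfW_2(\mu,\nu)$ presupposes a zeroth-order, sup-norm/oscillation stability of the potentials which is not proved in \cite{conforti2023Sinkhorn,chiarini2024semiconcavity} and is essentially only known in bounded settings (\cite{carlier2024displacement,deligiannidis2021quantitative}); deferring to ``closing the loop as in those references'' does not supply it, and the circularity you yourself identify is left unresolved. The paper never uses pointwise potential differences: it works with the matrix process $Z^\theta_s=\nabla^2\theta_s(X^{\psi^\mu,\rho}_s)$, controls $\bbE\HS{Z^\theta_0}$ via \Cref{lemma:HS:Z0}, converts $\int\bbE\HS{Z^\theta_s}^2\De s$ into already-controlled gradient quantities through the energy identity of \Cref{lemma:int:HS:Z}, and — crucially — needs a quantitative bound on $\nabla^3\psi^\nu_s$ acting on $Y^\theta_s$, obtained in \Cref{lemma:nabla3} by a coupling-by-reflection argument and the reflection principle. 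Your proposal contains no ingredient playing the role of this third-derivative estimate, and without it the covariance-difference expansion cannot be closed into a bound by $\bfW_2+\bfW_2^2$ with constants independent of $\mu$. The covariance representation of $\nabla\psi^\mu_0,\nabla^2\psi^\mu_0$ you start from is correct (it matches \eqref{Hess:Cov:psi:s}) and the appendix-style computation of the explicit constants is fine, but the two analytic cores of the theorem — avoiding $\scrH(\mu|\nu)$ and controlling the third derivative — are missing.
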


In this paper, whenever we write the $\rmL^1$-norm of a matrix we are considering the $\rmL^1$-norm of its Hilbert--Schmidt norm, the latter being defined as $
\HS{A}^2 = \sum_{i,j} A_{i,j}^2$. 
The presence of the dimension $d$ in these last second-order bounds comes from the Hilbert--Schmidt norm. Indeed, all our estimates are dimension-free up to being able to control the Hilbert--Schmidt norm of the Hessian of backpropagated potentials $(\nabla^2\psi^\nu_s)_{s\in[0,T)}$. In order to bound these last norms, in the Appendix, we rely on the known identity $\HS{\nabla^2\psi^\nu_s}\leq \sqrt{d}\norm{\nabla^2\psi^\nu_s}_2$ which allows us to efficiently bound this Hilbert--Schmidt norm in terms of the semiconcavity parameter $\Lam{\psi^\nu_0}$.

\begin{remark}
   We have stated our general Wasserstein first- and second-order stability estimates under the absolute continuity assumption $\mu\ll\nu$. This restriction is mainly due to the stochastic control and entropic strategy we have employed in the proof of our main result. In particular, this is a necessary condition for the pivotal bound~\eqref{eq:bound:conditionals:W2} we prove for conditional Schr\"odinger bridges. Despite that, our Wasserstein stability estimates can be extended via a regularization argument to a more general setting where $\mu\not\ll\nu$, provided one is able to control the semiconcavity parameter alongside the regularization procedure. This requires a case-by-case discussion. In the compactly supported case we show that if we further assume $\supp(\mu)\subseteq B_R(0)$, then in~\Cref{ass:mu} we may  drop $\mu\ll\nu$ (which is a stronger assumption). Moreover, we show via a heat kernel regularization that $\mu\ll\nu$ in~\Cref{ass:mu} can be completely dropped in the log-concave case. For the interested reader, these regularization procedures are performed in the proof of Corollaries B.1, B.2, B.3, and B.4 in the Appendix, where we further compute explicit rates and constants for these two specific settings.
\end{remark}

Let us also comment on a hidden technical point and a first reason why the previous statement is ``informal''. Rather than the differences $\nabla\varphi^\nu - \nabla\varphi^\mu$ and $\nabla^2\varphi^\nu - \nabla^2\varphi^\mu$, in Theorem \ref{thm:intro} we control $\nabla(\varphi^\nu - \varphi^\mu)$ and $\nabla^2(\varphi^\nu - \varphi^\mu)$. Note indeed that no regularity assumptions are formulated on $\rho$, so that $\varphi^\nu,\varphi^\mu$ may lack the required regularity. However, $\varphi^\nu - \varphi^\mu = \psi^\mu_0 - \psi^\nu_0$, which is instead the difference of two solutions to \ref{eq:HJB}, hence of two regular functions. Moreover, under some regularity assumption on $\rho$ (e.g.\ $\rho \in C^2(\mathbb{R}^d)$), gradient and Hessian of $\varphi^\mu,\varphi^\nu$ are in fact well defined. 

\medskip

For the reader's sake, we collect here the references within this article where our informal main result is stated and proven. The quantitative stability bound for gradients is proven in \Cref{thm:stability:mappe:general} whereas the Hessian stability bound is proven in \Cref{thm:stability_hessians:general}, where the explicit constants are expressed in terms of $T$ and of the semiconcavity and geometric parameters of $\rho,\nu$. The above statement is informal also for a second reason: solely under \Cref{ass:basic} and \Cref{ass:mu}, it is not clear whether these constants are finite, although we are able to show it and compute their asymptotics in our specialized setting. In particular, the compact setting and log-concave bounds are based on the explicit computations we perform in  \Cref{cor:grad:compact}, \Cref{cor:grad:log:conc}, \Cref{cor:hess:compact} and  \Cref{cor:hess:log:conc} in the Appendix.

\subsection{Exponential convergence of Hessian of Sinkhorn's iterates}
Most of the popularity EOT has recently gotten is due to the possibility of rapidly computing its solutions via an iterative algorithm, known as \emph{Sinkhorn's algorithm} \cite{Sinkhorn64,SinkhornKnopp67} or \emph{Iterative Proportional Fitting Procedure (IPFP)}. Given any initialization $\varphi^{0}\colon\bbRD\to\rset$, this algorithm solves \eqref{schrodinger:system} as a fixed point problem by generating two sequences $\{\varphi^n,\psi^n\}_{n \in\nset}$, called \emph{Sinkhorn potentials}, defined recursively as:
\bes
   \varphi^{n+1}= U_\rho + \log P_T \exp(-\psi^{n})\,,\qquad
   \psi^{n+1}= U_\mu + \log P_T \exp(-\varphi^{n+1})\eqsp.
\ees
As pointed out in~\cite{benamou2015iterative}, this is also equivalent to Bregman's iterated projection algorithm for relative entropy, which in the current setup produces two sequences of plans $(\pi^{n,n},\pi^{n+1,n})_{n\in\nset}$ starting from a positive measure $\pi^{0,0}$ according to the following recursion:
\bes\label{eq:primal_pb}
\pi^{n+1,n}\coloneqq{\arg\min}_{\Pi(\rho,\star)} \scrH(\cdot|\pi^{n,n})\,, \qquad\pi^{n+1,n+1}\coloneqq {\arg\min}_{\Pi(\star,\mu)} \scrH(\cdot|\pi^{n+1,n})\,,
\ees
where $\Pi(\rho,\star)$ (resp.\ $\Pi(\star,\mu)$) is the set of probability measures $\pi$ on $\mathbb{R}^{2d}$ such that the first marginal is $\rho$, \ie, $(\mathrm{proj}_x)_{\sharp}\pi = \rho$  (resp.\ the second marginal is $\mu$, \ie, $(\mathrm{proj}_y)_{\sharp}\pi = \mu$).
It is relatively easy (cf. \cite[Section 6]{Marcel:notes}) to show that, starting from $\pi^{0,0}(\rmd x \rmd y) \propto \exp(-\nicefrac{|x-y|^2}{2T}-\psi^{0}(y) - \varphi^{0}(x)) \rmd x \rmd y$, the  iterates in \eqref{eq:primal_pb} are related to Sinkhorn potentials through 
\begin{align*}
\txts \pi^{n+1,n}(\De x \De y) \txts \propto e^{-\frac{|x-y|^2}{2T}-\varphi^{n+1}(x)-\psi^n(y)} \rmd x \rmd y \eqsp, \qquad
\txts   \pi^{n+1,n+1}(\De x\De y)  \txts \propto e^{-\frac{|x-y|^2}{2T}-\varphi^{n+1}(x)-\psi^{n+1}(y)}\rmd x \rmd y \eqsp.
\end{align*}
In the sequel, we will refer to the couplings $(\pi^{n,n},\pi^{n+1,n})_{n\in\nset}$ as Sinkhorn plans. By definition $\pi^{n+1,n}$ has the correct first marginal, but wrong second marginal, which we denote with  $\mu^{n+1,n}$. Similarly, the second marginal of $\pi^{n,n}$ is fitted, however the first one might not be correct and hereafter we will denote it as  $\rho^{n,n}$. 
Moreover, $\pi^{n+1,n}$ is the optimal entropic plan associated to the EOT problem with marginals $\rho,\,\mu^{n+1,n}$ whereas $\pi^{n,n}$ is the optimal EOT plan associated to the problem with marginals $\rho^{n,n},\,\mu$. 
Due to this partial marginal fitting nature of the algorithm, since we can see Sinkhorn plans $\{\pi^{n+1,n}\}_{n\in\N}$ as a sequence of entropic plans where the first marginal is always fixed and the second one changes according to $\{\mu^{n+1,n}\}$, we see that proving the exponential convergence of the algorithm boils down to apply quantitative stability estimates and to control the sequence of wrong marginals. For these reasons, Sinkhorn's algorithm and quantitative convergence bounds quantitative stability bounds for EOT are two problems tightly related and both have been addressed from a vast literature (see literature review below). Despite this, in the unbounded settings, much less has been known until the recent contributions of \cite{conforti2023Sinkhorn,eckstein2023hilberts,chiarini2024semiconcavity}, where this problem has been addressed in full generality and where exponential convergence rates were shown to hold in relative entropy for Sinkhorn plans and in $\rmL^p$-norm (with $p\in\{1,2\}$) for gradients of Sinkhorn potentials. Here our Hessian stability estimates allow us to deduce also a second-order convergence result, \ie, that the Hessian of Sinkhorn potentials converges exponentially fast with the same rate obtained in \cite{chiarini2024semiconcavity} for Sinkhorn plans. To state it, let us recall that a probability measure $\nu\in\cP(\bbRD)$ is said to satisfy a Talagrand inequality with constant $\tau$, \ref{eq:TI} for short, if 
\be\label{eq:TI}\tag{TI($\tau$)}
\bfW_2^2(\mu,\nu) \leq 2\tau\,\scrH(\mu|\nu)\,, \quad \forall \mu \in\cP(\bbRD)\,.
\ee

\begin{theorem}\label{thm:sinkhorn:hessian}
    Assume \Cref{ass:basic}  and  that there exist $\Lambda\in(0,+\infty)$ and $N\geq 2$ such that the function $g^x_{\varphi_0^n}$ 
is $\Lambda$-semiconcave uniformly in $x\in\supp(\rho)$ and $n\geq N$. If $\mu^{n,n-1}$ satisfies \ref{eq:TI} for some $\tau\in(0,+\infty)$ and for all $n\geq N$, then 

 \bes
    \begin{aligned}
    &\|\nabla\varphi^{n+1}-\nabla\varphi^\mu \|^2_{\rmL^2(\rho)}\lesssim\bigg(1-\frac{  T }{T+\tau \Lambda}\bigg)^{(n-N+1)}\tau\, \scrH(\pi^{\mu}|\pi^{0,0})\,,    \\
    &\|    \nabla^2 \varphi^{{n+1}}-\nabla^2 \varphi^\mu\|_{\rmL^1(\rho)} \lesssim  \bigg(1-\frac{  T }{T+\tau \Lambda}\bigg)^{\frac{n-N+1}{2}}\,\sqrt{\tau\scrH(\pi^{\mu}|\pi^{0,0})}+\bigg(1-\frac{  T }{T+\tau \Lambda}\bigg)^{(n-N+1)} \tau\scrH(\pi^{\mu}|\pi^{0,0}),    
    \end{aligned}
    \ees
hold for all $n\geq N$ up to multiplicative constants that depend polynomially only on $\rho,\,\mu,\,T$ (and not on the iterates). These constants are explicit. 

In particular, up to numerical universal constants, we have
    \begin{itemize}
        \item if $\supp(\rho),\,\supp(\mu)\subseteq B_R(0)$ (for some radius big enough, \ie, $R^2\geq T$), then the uniform semiconcavity parameter reads as $\Lambda=R^2\,T^{-1}$ and 
        \bes 
        \begin{aligned}
            &\|\nabla\varphi^{n+1}-\nabla\varphi^\mu \|^2_{\rmL^2(\rho)} \lesssim\nicefrac{R^4}{T^4}\,\bigg(1-\frac{  T^2 }{T^2+\tau R^2}\bigg)^{(n-N+1)} \tau\,\scrH(\pi^{\mu}|\pi^{0,0})\,,   \\
    &\|    \nabla^2 \varphi^{{n+1}}-\nabla^2 \varphi^\mu\|_{\rmL^1(\rho)} \lesssim
   (\nicefrac{R^4}{T^{\nicefrac72}}+\nicefrac{d}{T})\,\bigg(1-\frac{  T^2 }{T^2+\tau R^2}\bigg)^{\frac{n-N+1}{2}}\,\sqrt{\tau\scrH(\pi^{\mu}|\pi^{0,0})}\\
   &\qquad\qquad\qquad\qquad\qquad\qquad+
   \nicefrac{R^6}{T^5}\,\bigg(1-\frac{  T^2 }{T^2+\tau R^2}\bigg)^{(n-N+1)} \tau\,\scrH(\pi^{\mu}|\pi^{0,0})\,,
   \end{aligned}
    \ees
    \item if both $\rho$ and $\mu$ are log-concave, \ie, their (negative) log-densities satisfy  $
 \nabla^2U_\rho\geq \alpha_\rho$  and $\nabla^2U_\mu\geq \alpha_\mu$ for some  $\alpha_\rho,\,\alpha_\mu>0$ (wlog such that $\alpha_\rho\vee\alpha_\mu<T^{-1}$),  then the uniform semiconcavity parameter reads as $\Lambda=(\alpha_\rho\,T)^{-1}$ and 
   \bes 
        \begin{aligned}
            &\|\nabla\varphi^{n+1}-\nabla\varphi^\mu \|^2_{\rmL^2(\rho)}\lesssim \frac{\tau}{\alpha_\rho\,\alpha_\mu\,T^4}\,\bigg(1-\frac{  \alpha_\rho\,T^2 }{\alpha_\rho\,T^2+\tau }\bigg)^{(n-N+1)} \scrH(\pi^{\mu}|\pi^{0,0})\,, \\
    &\|    \nabla^2 \varphi^{{n+1}}-\nabla^2 \varphi^\mu\|_{\rmL^1(\rho)} \lesssim
   \biggl(\frac1{\alpha_\mu\,\sqrt{\alpha_\rho}\,T^3}+\frac{d}{\sqrt{\alpha_\rho\,\alpha_\mu}\,T^2}\biggr)\,\bigg(1-\frac{  \alpha_\rho\,T^2 }{\alpha_\rho\,T^2+\tau }\bigg)^{\frac{n-N+1}{2}}\,\sqrt{\tau\scrH(\pi^{\mu}|\pi^{0,0})}\\
   &\qquad\qquad\qquad\qquad\qquad\qquad\qquad\qquad\qquad+
   \frac\tau{\alpha_\rho\,\alpha_\mu\,T^4}\,\bigg(1-\frac{  \alpha_\rho\,T^2 }{\alpha_\rho\,T^2+\tau }\bigg)^{(n-N+1)} \scrH(\pi^{\mu}|\pi^{0,0})\,.
   \end{aligned}
    \ees
    \end{itemize}
\end{theorem}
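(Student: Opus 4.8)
The plan is to derive this statement as a corollary of the general stability bounds (\Cref{thm:stability:mappe:general} and \Cref{thm:stability_hessians:general}, informally \Cref{thm:intro}) applied along the Sinkhorn iteration, exploiting the identification of $\pi^{n+1,n}$ with the entropic plan between $\rho$ and the "wrong" marginal $\mu^{n+1,n}$. Concretely, $\varphi^{n+1}$ is (up to the additive constant, which is immaterial for $\nabla$ and $\nabla^2$) the first entropic potential of the EOT problem $(\rho,\mu^{n+1,n})$, while $\varphi^\mu$ is that of $(\rho,\mu)$. Since $\rho$ is fixed, the general theorems give, with $\nu$ replaced by $\mu^{n+1,n}$,
\bes
\|\nabla\varphi^{n+1}-\nabla\varphi^\mu\|_{\rmL^2(\rho)}^2 \lesssim \bfW_2^2(\mu,\mu^{n+1,n})\,,\qquad
\|\nabla^2\varphi^{n+1}-\nabla^2\varphi^\mu\|_{\rmL^1(\rho)} \lesssim \bfW_2(\mu,\mu^{n+1,n}) + \bfW_2^2(\mu,\mu^{n+1,n})\,,
\ees
with constants depending only on $\rho,\mu,T$ once one checks that the relevant semiconcavity parameter $\Lam{\varphi^{\mu^{n+1,n}}_0}$ is controlled uniformly in $n\geq N$ — but this is exactly what the hypothesis $\Lambda$-semiconcavity of $g^x_{\psi_0^n}$ uniform in $n\geq N$ provides (it is the same quantity, since $\varphi_0^n$ is the backpropagated first potential of the $(\rho,\mu^{n+1,n})$ problem). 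So the first key step is to verify that the hypotheses of the general Hessian/gradient theorems are met uniformly along the tail of the iteration, identifying each object in those theorems with its Sinkhorn counterpart and checking that the geometric constants ($R$, or $\alpha_\rho,\alpha_\mu$) transfer unchanged — in the log-concave case one must confirm that $\mu^{n+1,n}$, although not equal to $\mu$, still only enters through $\Lam{\varphi^{\mu^{n+1,n}}_0}$, which the assumption bounds directly, so no log-concavity of $\mu^{n+1,n}$ itself is needed.

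The second key step is to convert $\bfW_2^2(\mu,\mu^{n+1,n})$ into the geometric decay $\big(1-\tfrac{T}{T+\tau\Lambda}\big)^{n-N+1}\scrH(\pi^\mu|\pi^{0,0})$. This is precisely the content of the exponential convergence result for Sinkhorn plans from \cite{chiarini2024semiconcavity}: under the stated $\Lambda$-semiconcavity and \ref{eq:TI}$(\tau)$ for $\mu^{n,n-1}$, one has $\scrH(\pi^{n,n}|\pi^\mu)\leq \big(1-\tfrac{T}{T+\tau\Lambda}\big)^{n-N+1}\scrH(\pi^{N-1,N-1}|\pi^\mu)$ (or directly in terms of $\scrH(\pi^\mu|\pi^{0,0})$ after absorbing the first $N$ steps into the constant), and then $\bfW_2^2(\mu,\mu^{n+1,n})\leq 2\tau\,\scrH(\mu^{n+1,n}|\mu)$ by Talagrand, while the marginal entropy $\scrH(\mu^{n+1,n}|\mu)$ is dominated by the plan entropy $\scrH(\pi^{n+1,n}|\pi^\mu)$ via the data-processing inequality for relative entropy (projection onto the second coordinate). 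Chaining these gives $\bfW_2^2(\mu,\mu^{n+1,n})\lesssim \tau\big(1-\tfrac{T}{T+\tau\Lambda}\big)^{n-N+1}\scrH(\pi^\mu|\pi^{0,0})$, hence $\bfW_2(\mu,\mu^{n+1,n})\lesssim \sqrt{\tau}\big(1-\tfrac{T}{T+\tau\Lambda}\big)^{(n-N+1)/2}\sqrt{\scrH(\pi^\mu|\pi^{0,0})}$. Substituting the $\bfW_2$ and $\bfW_2^2$ bounds into the displayed stability inequalities yields exactly the claimed two-term form for the Hessian (linear-in-$\bfW_2$ term giving the half-power decay, quadratic term giving the full-power decay) and the single-term form for the gradient.

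For the two explicit regimes, the third step is purely bookkeeping: plug into the general constants the asymptotics of the semiconcavity and geometric parameters computed in \Cref{sec:app:phi} and \Cref{sec:app:psi} — $R^4/T^4$ etc. in the compact case, $1/(\alpha_\rho\alpha_\mu T^4)$ etc. in the log-concave case — exactly as in \Cref{cor:grad:compact}, \Cref{cor:hess:compact}, \Cref{cor:grad:log:conc}, \Cref{cor:hess:log:conc}, but now carrying the extra factor $\tau$ (resp. $\sqrt\tau$) and the geometric rate along for the ride. The main obstacle is the bookkeeping of the first step: one must be careful that the constants in the general theorems depend on $\nu=\mu^{n+1,n}$ only through quantities that are controlled \emph{uniformly in $n$} under the present hypotheses — in particular $\Lam{\varphi^{\mu^{n+1,n}}_0}$ (handled by assumption) and second-moment/entropy quantities of $\mu^{n+1,n}$ (handled since these converge to those of $\mu$ and hence are uniformly bounded on the tail). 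Once uniformity is secured, replacing "$\rho,\nu$" by "$\rho,\mu$" in the final constants is legitimate because, asymptotically, the geometric parameters of $\mu^{n+1,n}$ are those of $\mu$, and the transient discrepancy for $N\le n\le$ some finite index is absorbed into the multiplicative constant.
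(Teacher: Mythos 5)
Your overall architecture is the same as the paper's: identify $\varphi^{n+1}$ with the first entropic potential of the EOT problem $(\rho,\mu^{n+1,n})$, apply the general stability theorems, and convert $\bfW_2(\mu,\mu^{n+1,n})$ into geometric decay via Talagrand, data processing and the convergence result of \cite{chiarini2024semiconcavity}. The gap lies in how you instantiate the stability theorems. You substitute $\nu\mapsto\mu^{n+1,n}$, so the constants of \Cref{thm:stability:mappe:general} and \Cref{thm:stability_hessians:general} are built from the $(\rho,\mu^{n+1,n})$ problem: they involve not only $\Lam{\varphi^{\mu^{n+1,n}}_0}$ but also the lower bounds $\lambda(\psi^{\mu^{n+1,n}}_s)$, the constant $\gamma_{\tau_u}=\sup_s\sup_x\HS{\nabla^2\psi^{\mu^{n+1,n}}_s}$, $\bar\lambda_{\tau_\ell}$ and the integrals of $\mathcal{I}^{\psi^{\mu^{n+1,n}}}$, none of which is controlled by the assumed uniform semiconcavity of $g^x_{\psi_0^n}$ alone. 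Hence your claim that in the log-concave regime ``no log-concavity of $\mu^{n+1,n}$ itself is needed'' is incorrect for your choice of roles: the computations of \Cref{sec:app:psi} producing the constants of \Cref{cor:hess:log:conc} use $\nabla^2U_\nu\geq\alpha_\nu$ for the second marginal of the problem being perturbed, and the iterates $\mu^{n+1,n}$ are not known to be log-concave (the compact case does transfer, since $\mu^{n+1,n}\ll\mu$ forces $\supp(\mu^{n+1,n})\subseteq B_R$). The paper avoids this issue by the opposite substitution: it applies the two theorems with $(\nu,\mu)$ there replaced by $(\mu,\mu^{n+1,n})$, so that all constants ($\Crhomu^{\delta}$, $A$, $\Kappadelmu$) depend only on $\rho,\mu,T$ from the start, and the only property required of the iterate is the validity of \Cref{ass:mu}, which follows from the finiteness of $\scrH(\mu|\mu^{n+1,n})$ obtained from the decay estimate.

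A second, smaller misstep is the direction of your entropy chain: the hypothesis is that $\mu^{n,n-1}$ satisfies \ref{eq:TI}, not $\mu$, so the correct chain is $\bfW_2^2(\mu,\mu^{n+1,n})\leq 2\tau\,\scrH(\mu|\mu^{n+1,n})\leq 2\tau\,\scrH(\pi^\mu|\pi^{n+1,n})$, with \cite{chiarini2024semiconcavity} providing the geometric decay of $\scrH(\pi^\mu|\pi^{n+1,n})$ in terms of $\scrH(\pi^\mu|\pi^{0,0})$; your version invokes Talagrand for $\mu$ and decay of $\scrH(\pi^{n,n}|\pi^\mu)$, neither of which is granted by the stated hypotheses. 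With the roles swapped as in the paper and these directions fixed, the remainder of your argument (inserting $\bfW_2$ and $\bfW_2^2$ into the linear and quadratic terms and then reading off the appendix asymptotics) coincides with the paper's proof.
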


Let us remark that the uniform $\Lambda$-semiconcavity of $g^x_{\varphi^n_0}$ (as defined in~\eqref{eq:def:costo+funzione}) and the Talagrand inequality are the same assumptions considered in \cite{chiarini2024semiconcavity} when proving the exponential convergence of Sinkhorn's plans, and combining them with our stability bounds for gradients and Hessians leads to the above first- and second-order convergence for Sinkhorn's iterates. Moreover, as shown in \Cref{sub:rho:compact} and \Cref{app:rho:logconcave:no:beta}, whenever $\rho$ is either compactly supported or log-concave, we are guaranteed that the function $g^x_{\varphi^n_0}$ is $\Lambda$-semiconcave uniformly in $n\in\N$, whence the validity of the first assumption in the previous theorem.

As concerns the uniform Talagrand inequality assumption on the marginals $\mu^{n,n-1}$ generated along Sinkhorn's algorithm, let us comment its role and a possible alternative assumption. As for the role, we employ Talagrand inequality for a twofold reason: to apply \cite{chiarini2024semiconcavity} and to translate their entropic convergence estimates into our $\bfW_2$-stability bounds (cf.~\eqref{eq:cor:from:sink}). As for the alternative assumption, our convergence result can still be obtained by alternatively assuming $\mu$ to satisfy a Talagrand inequality (in order to apply \cite[Theorem 1.2]{chiarini2024semiconcavity}) and Sinkhorn's plans $\pi^{n,n-1}$ to satisfy a Talagrand inequality (in order to translate entropic bounds into Wasserstein ones). This last condition is met if for instance $\rho$ satisfies a log-Sobolev inequality, which implies its validity also for Sinkhorn's plans $\pi^{n,n-1}$ \cite[Theorem 1.3]{conforti2024weak}, and hence a Talagrand inequality (see for instance \cite[Theorem 9.6.1]{bakry2013analysis}).

\subsection{Schr\"odinger bridge point of view}\label{sec:SB:processes}

Our proof strategy relies on the stochastic control representation of entropic plans as laws of solutions to time-inhomogeneous SDEs. More precisely, we are going to consider the forward Schr\"odinger bridge process (from $\rho$ to $\mu$) defined as the SDE driven by $-\nabla\psi^\mu_s$, that is the stochastic process $(X^{\psi^\mu,\rho})_{s\in[0,T]}$ solution to (cf. \cite{conforti2024weak})
\begin{equation}\label{eq:def:SB:dritto}
\De X^{\psi^\mu,\rho}_s = -\nabla\psi^\mu_s(X^{\psi^\mu,\rho}_s)\De s+\De B_s\,,\quad X_0^{\psi^\mu,\rho}\sim\rho\,.
\end{equation}
Then the joint law $\cL(X^{\psi^\mu,\rho}_0, X^{\psi^\mu,\rho}_{T})$ coincides with the optimal entropic coupling $\pi^\mu$, \ie, the solution to EOT with marginals $\rho,\mu$.

Similarly, we will consider its time-reversal corresponding process, \ie, the (backward) Schr\"odinger bridge (from $\mu$ to $\rho$) which solves
\begin{equation}\label{eq:def:SB:back}
\De X^{\varphi^\mu,\mu}_s = -\nabla\varphi^\mu_s(X^{\varphi^\mu,\mu}_s)\De s+\De B_s\,,\quad X^{\varphi^\mu,\mu}_0\sim\mu\,.
\end{equation}
Let us recall here that the bridge 
$X^{\varphi^\mu,\mu}_\cdot$ is the time-reversal process 
of the forward bridge $X^{\psi^\mu,\rho}_\cdot$, \ie, for any $s\in[0,T]$ the following identities in law hold
\be\label{eq:time:reversal:bridge}
X^{\varphi^\mu,\mu}_s\overset{\text{law}}{=}X^{\psi^\mu,\rho}_{T-s} \,,
\ee
and clearly that $\cL(X^{\varphi^\mu,\mu}_T,X^{\varphi^\mu,\mu}_0)=\cL(X^{\psi^\mu,\rho}_0, X^{\psi^\mu,\rho}_{T})=\pi^\mu$. 

In light of these representations, it is clear that semiconcavity and functional properties of the EOT plan $\pi^\mu$ are affected by convexity properties of the drifts appearing in~\eqref{eq:def:SB:dritto} and~\eqref{eq:def:SB:back}, as already noticed in \cite{conforti2024weak}. For this reason, alongside the semiconcavity parameter $\Lam{\varphi^\mu_0}$ our constants appearing below will depend on lower bounds on the Hessians of propagated potentials along HJB, \ie, for any $h\in\{\varphi^\mu,\,\psi^\mu\}$ we will consider the lower bounds $\nabla^2h_s\geq \lambda(h_s)$ with $\lambda(h_s)\in\rset$. 
Our general results are stated for any given sequence $(\lambda(h_s))_{s\in[0,T)}$ satisfying this lower bound (and we do not assume it to be the optimal one, as we did for $\Lam{\varphi^\mu_0}$). In the Appendix we  provide explicit lower bounds for the examples considered here.

\subsection{Literature review}

\noindent \textbf{Quantitative stability.} In recent years a rich literature has flourished around quantitative stability questions for primal and dual solutions of the EOT problem.

At the level of entropic plans, let us mention \cite{lagg2022gradient} and \cite{eckstein2021quantitative}. In the former, the difference in (symmetric) entropy between the solutions to two different EOT problems is controlled in terms of a negative Sobolev norm, for a wide class of problems with costs induced by diffusions on Riemannian manifolds with Ricci curvature bounded from below (which includes the quadratic cost on $\bbRD$). The latter obtains instead a quantitative H\"older estimate between the Wasserstein distance of optimal plans and that of their marginals.  Let us further cite \cite{ghosal2021stability}, where a more qualitative stability result is proven under mild hypotheses.
Finally, \cite{chiarini2024semiconcavity} provides a control on the entropy between two entropic plans in terms of the (squared) Wasserstein distance between the marginals. The peculiarity of this last work is the approach, since it exploits for the first time the propagation of semiconcavity along HJB to obtain a quantitative stability result for primal solutions. The second-order quantitative stability bounds on entropic and Sinkhorn potentials that we will show in this manuscript build upon this previous contribution. For this reason and for sake of completeness, we prove the entropic stability estimate via semiconcavity also in the present manuscript, but we provide a different proof, based on the stochastic representation of Schr\"odinger bridges (see \Cref{thm:entropic_stab} below).

As concerns dual solutions, \ie, entropic potentials, in \cite{carlier2020differential} an $\rmL^\infty$-Lipschitz bound is obtained; it applies to multimarginal OT, but it requires either the space or the cost to be bounded. In \cite{deligiannidis2021quantitative} the $\rmL^\infty$-norm of the difference between entropic potentials associated to two EOT problems is controlled by the Wasserstein distance of order one between the corresponding marginals, using an approach based on Hilbert's metric; but again, both the cost function and the marginals' supports are assumed to be bounded. On the other hand, \cite{carlier2024displacement} succeed in controlling the same difference with the Wasserstein distance of order two of the respective marginals, provided the cost is $C^{2,\infty}$, \ie, bounded with two bounded derivatives; if the regularity of the cost is higher, say $C^{k+2,\infty}$, then the $\rmL^\infty$-norm of the difference between entropic potentials can be replaced by the $C^{k,\infty}$-norm. The interest in higher-order stability results for entropic potentials is motivated by the fact that their gradients provide good proxies for OT maps (\cite{greco2024thesis,malamut2023convergence,lagg2022gradient} in unbounded settings and \cite{pooladian2021entropic} in semidiscrete ones) and entropic estimates can be leveraged to obtain in the $T$ vanishing limit estimates for Kantorovich potentials and OT maps  \cite{fathi2019proof, chewi2022entropic, kitagawa2025stabilityoptimaltransportmaps}. In particular, very recently \cite{kitagawa2025stabilityoptimaltransportmaps} rely on estimates for regularized potentials combined with gluing arguments in the vanishing $T$ limit, to get quantitative stability estimates for OT maps.
Finally, in \cite{divol2024tight} the $\rmL^2$-norm of the difference of the gradients of entropic potentials is controlled in a Lipschitz way by the Wasserstein distance between the corresponding marginals, using a functional inequality for tilt-stable probability measures, see \cite{chen2022localization} and \cite[Lemma 3.21]{bauerschmidt2024stochastic}, and under the assumption that both entropic potentials have bounded Hessian. The dependence of the Lipschitz constant on the regularization parameter is polynomial, thus improving on earlier results, and marginals may have unbounded support.
Among those just mentioned, this is the closest contribution to ours, since \cite{divol2024tight} use Lipschitzianity of the Schr\"odinger maps (and hence concavity/convexity bounds for entropic potentials) in order to prove stability bounds for $\nabla\varphi^\mu$. Therefore, our work can be seen as an extension to second-order quantitative bounds. Moreover, our stability bounds for the gradients behave as theirs (our potentials and theirs differ from a multiplicative prefactor $-T$). In the compact setting we get the same asymptotic behavior in $R$ and $T$, whereas if we put ourselves in the Caffarelli's setting (\ie, Hessian of marginals upper and lower bounded), then our general estimate would not depend on $T$ and would behave as their stability result when assuming bi-Lipschitzianity.

\medskip

\noindent \textbf{Sinkhorn's algorithm.} Contributions to Sinkhorn's algorithm in the literature date back to \cite{Sinkhorn64} and \cite{SinkhornKnopp67}. It was originally considered in a discrete setting framework for doubly stochastic matrices and the first exponential convergence result was given in \cite{Franklin89hilbert,borwein1994entropy} by exploiting properties of Hilbert's projective metric. After the seminal work of  \cite{cuturi2013sinkhorn}, which opened up to possible application of EOT to  machine learning, multiple papers dealt with the convergence of the algorithm. Particularly in bounded settings (\ie, compact spaces or bounded costs) this has already been well established in \cite{chen2016hilbertmetric,marinogerolin2020,Carlier22multisink}. In particular \cite{chen2016hilbertmetric} obtained the first exponential convergence results in the continuous setting using the Hilbert's metric approach. However, this approach provides rates that depend exponentially on the regularizing parameter $T$ and cannot be extended to unbounded settings. 

On the other hand, much less was known for unbounded settings (including the most iconic and simple quadratic cost setting with log-concave marginals). In fact, the only widely general known qualitative convergence result was due to \cite{ruschendorf1995convergence}, and it has been recently improved in \cite{nutz2021entropic}. The first quantitative result in unbounded settings we are aware of is \cite{eckstein2021quantitative}, subsequently improved in \cite{ghosal2022nutz}, where the authors prove a polynomial convergence rate. These works are based on  quantitative stability estimates for EOT and the insightful interpretation of Sinkhorn's algorithm as a block-coordinate descent algorithm on the dual problem \cite{leger2021gradient,aubin2022mirror,leger2023gradient}.

Only very recently, it has been established the exponential convergence for unbounded costs and marginals. Up to the authors' knowledge, the first contribution in this setting is given in  \cite{conforti2023Sinkhorn}, which studies the quadratic cost. There, the main result is that if the marginals are weakly log-concave and the regularization parameter $T$ is large enough, exponential convergence of the gradients of the iterates holds (and their results work for any $T>0$ for Gaussian marginals). 
Moreover, this is the first contribution that has highlighted how geometric assumptions on the marginals (such as log-concavity) can be leveraged to improve the dependence in the convergence rates, from exponential to polynomial in $T$. Later, following similar considerations, \cite{chizat2025sharper} improved the exponential convergence results in the bounded setting, showing that the exponential rate of convergence deteriorates polynomially in $T$. With regard to the unbounded setting, \cite{eckstein2023hilberts} has subsequently managed to construct a suitable version of Hilbert's metric for general unbounded costs. In contrast with \cite{conforti2023Sinkhorn}, exponential convergence is shown for all values of $T$, under a growth condition assumption. Roughly speaking, therein the author assumes that the tails of the marginals decay (strictly) faster than the cost function considered.  When applied to the quadratic cost, this assumption does not completely cover log-concave distributions and their perturbations, leaving out Gaussian marginals for example. 

The first paper that has finally managed to provide exponential convergence rates in general (possibly unbounded) settings, working for any regularization parameter $T>0$ and with polynomial dependence in $T$, is~\cite{chiarini2024semiconcavity}. There, together with our coauthors, we show how  semiconcavity bounds on Sinkhorn potentials can be leveraged to obtain exponential convergence. Our geometric approach is broadly general and covers as particular cases the bounded settings as well as the (anisotropic) quadratic costs, which include, for instance, also the case when the cost function is the transition kernel induced by an Ornstein--Uhlenbeck process (\ie, the framework of the Schr\"odinger bridge problem with a non-Gaussian reference process). The key observation employed there is that the semiconcavity of the function defined in~\eqref{eq:def:costo+funzione} is enough to deduce quantitative stability estimates and exponential convergence rates depending on the semiconcavity parameter $\Lam{\varphi^\mu_0}$.

Lastly, it is worth mentioning different contributions that
over the past few years have focused on different asymptotic properties of Sinkhorn's algorithm. Let us just mention \cite{berman2020sinkhorn} for the relation with Monge--Amp\`ere equation, \cite{deb2023wasserstein} for the construction of Wasserstein mirror gradient flows,  \cite{sander2022sinkformers} for construction of a Transformer variant inspired by Sinkhorn's algorithm, and the very recent series of contributions \cite{akyildiz2024gaussianentropicoptimaltransport,akyildiz2025contractionpropertiessinkhornsemigroups,delmoral2025stabilityschrodingerbridgessinkhorn} that focus on the relation of Schr\"odinger bridges and Sinkhorn's algorithm with the Riccati matrix difference equations, and the impact of these results in the context of multivariate linear Gaussian models and statistical finite mixture models. We conclude by mentioning the recent work \cite{eckstein2025exponentialconvergencegeneraliterative}, where the authors investigate the convergence of IPFP for a more general class of problems (which includes EOT), whose proof is based on strong convexity arguments for the dual problem, highlighting the role of the geometric interplay between the subspaces defining the constraints.

\medskip

We would like to conclude this review by mentioning results that have inspired us or are related to ours.  As we have already stated, our strategy is based on stochastic analysis and second-order estimates along HJB equations. This approach has been initially introduced in \cite{conforti2024weak}, where Conforti has proved weak semiconcavity estimates for entropic potentials by studying how this property propagates along HJB equations. In  \cite{conforti2023Sinkhorn} and \cite{chiarini2024semiconcavity}, this has already been employed for proving the exponential convergence of Sinkhorn's algorithm and for showing stability estimates of entropic plans. Here we further extend its use to show second-order stability estimates. 
In order to prove the convergence of Sinkhorn's algorithm, a similar approach has been employed also in \cite{greco2023SinkhornTorus,greco2024thesis} where we have studied how Lipschitzianity propagates along HJB equations, leading to a more perturbative convergence result (instead of a geometric one).
Lastly, we would like to mention \cite{chaintron2025regularitystabilitygibbsconditioning}, though not directly applied to EOT; there, the authors provide third-order estimates propagated along HJB in order to prove stability estimates for stochastic optimal control problems. These new ideas open up to further investigation of third-order estimates for entropic potentials.


\section{Preliminaries}\label{sec:preliminari}

In this paper we are interested in the behavior of the forward process $(Y^\theta_s)_{s\in[0,T]}$ and backward process $(Y^\eta_s)_{s\in[0,T]}$ defined as $Y^\theta_s\coloneqq \nabla\theta_s(X^{\psi^\mu,\rho}_s)$ where $\theta_s\coloneqq \psi^\nu_s-\psi^\mu_s$ and similarly as $Y^\eta\coloneqq \nabla\eta_s(X^{\varphi^\mu,\mu}_s)$ with $\eta_s\coloneqq \varphi^\nu_s-\varphi^\mu_s$. Since both $\varphi^\nu_\cdot$ and $\varphi^\mu_\cdot$ solve~\eqref{eq:HJB} it is immediate to see that $\eta_\cdot$ and $\theta_\cdot$ solve
\bes
\partial_s \eta_s + \frac12\Delta \eta_s -\nabla \varphi^\mu_s\cdot \nabla \eta_s -\frac12|\nabla\eta_s|^2=0 \,, \qquad \partial_s \theta_s + \frac12\Delta \theta_s -\nabla \psi^\mu_s\cdot \nabla \theta_s -\frac12|\nabla\theta_s|^2=0 \,.
\ees
Hence from It\^o's formula we further deduce that 
\begin{subequations}
\be\label{eq:sde:Yeta}
\De Y^\eta_s=\nabla^2\varphi^\nu_s(X^{\varphi^\mu,\mu}_s)Y^\eta_s\,\De s+\nabla^2\eta_s(X^{\varphi^\mu,\mu}_s)\,\De B_s\,,
\ee
\be\label{eq:sde:Ytheta}
\De Y^\theta_s=\nabla^2\psi^\nu_s(X^{\psi^\mu,\rho}_s)Y^\theta_s\,\De s+\nabla^2\theta_s(X^{\psi^\mu,\rho}_s)\,\De B_s\,.
\ee
\end{subequations}
Finally, notice that $
\bbE[|Y^\eta_T|^2]=\|\nabla\varphi^\nu-\nabla\varphi^\mu \|_{\rmL^2(\rho)}^2 =\|\nabla\psi^\nu_0-\nabla\psi^\mu_0\|_{\rmL^2(\rho)}^2=\bbE[|Y^\theta_0|^2]$.

Besides the relation at initial and terminal times with the integrated difference between the gradients of the potentials, the processes $(Y^\eta_s)_{s\in[0,T]}$ and $(Y^\theta_s)_{s\in[0,T]}$ play a crucial role since their integrated in time mean squares measure the entropic distance between $\pi^\nu$ and $\pi^\mu$. Namely, from Girsanov's theory we know that 
\bes
\frac12\,\int_0^{\delta T}\bbE[|Y^\eta_s|^2]\De s=\scrH(\cL(X^{\varphi^\mu,\mu}_{[0,\delta T]})|\cL(X^{\varphi^\nu,\mu}_{[0,\delta T]}))
\ees
and in particular whenever $\mu\ll\nu$ and for $\delta=1$ we then have
\be\label{eq:energia:entropia:running}
\frac12\,\int_0^{ T}\bbE[|Y^\eta_s|^2]\De s=\bbE_\mu[\scrH(\pi^{\mu}(\cdot|X)|\pi^\nu(\cdot|X))] \,,
\ee
which gives rise to 
\be\label{eq:energia:entropia}
\scrH(\mu|\nu)+\frac12\,\int_0^ T\bbE[|Y^\eta_s|^2]\De s=\scrH(\cL(X^{\varphi^\mu,\mu}_{[0, T]})|\cL(X^{\varphi^\nu,\nu}_{[0, T]}))=\scrH(\pi^\mu|\pi^\nu)\,
\ee
whenever $\scrH(\mu|\nu)$ is finite.
Similarly we have 
\be\label{eq:energia:entropia:forward}
\frac12\,\int_0^ {\delta T}\bbE[|Y^\theta_s|^2]\De s=\scrH(\cL(X^{\psi^\mu,\rho}_{[0, \delta T]})|\cL(X^{\psi^\nu,\rho}_{[0, \delta T]}))\,,
\ee
which equals $\scrH(\pi^\mu|\pi^\nu)$ for $\delta=1$.

\subsection{Entropic and gradients' stability}

Let us start by showing how the stochastic control point of view can be employed in studying entropic stability of the optimal plans.

\begin{theorem}\label{thm:entropic_stab}
Assume \Cref{ass:basic} and let $\pi^\nu,\,\pi^\mu$ denote the optimal plans associated to EOT with marginals $(\rho,\nu)$ and $(\rho,\mu)$ respectively. Then the following entropic stability bound holds
\be\label{eq:ent_stab}
\scrH(\pi^{\mu}|\pi^\nu)\leq \scrH(\mu|\nu)+\frac{\Lam{\varphi^\nu_0}}{2T}\,\bfW_2^2(\mu,\nu)\,.
\ee
Moreover, if \Cref{ass:mu} holds then we have 
\begin{subequations}
\be\label{eq:bound:conditionals:W2}
\bbE_\mu[\scrH(\pi^{\mu}(\cdot|X)|\pi^\nu(\cdot|X))]\leq \frac{\Lam{\varphi^\nu_0}}{2T}\,\bfW_2^2(\mu,\nu) \,,
\ee
\begin{equation}\label{eq:bound:Y:eta:0}
\bbE[|Y^\eta_0|^2]=\| \nabla\varphi^\mu_0-\nabla\varphi^\nu_0 \|_{\rmL^2(\mu)}^2 \leq \frac{\Lam{\varphi^\nu_0}\,\Cphi}{T^2}\,\bfW_2^2(\mu,\nu)\,,
\end{equation}
\end{subequations}
where the positive constant $\Cphi$ is defined as  
  \be\label{eq:def:Cphi}
  \Cphi\coloneqq T\biggl (\int_0^{T}e^{\int_0^s 2\lambda(\varphi^\nu_t)\De t}\De s\biggr)^{-1} \,.
  \ee
\end{theorem}

The bound \eqref{eq:ent_stab} has already been proven by the authors and collaborators in \cite[Theorem 1.1]{chiarini2024semiconcavity}. We report it here since its proof can be employed in order to get \eqref{eq:bound:conditionals:W2} and \eqref{eq:bound:Y:eta:0}, which will play a crucial role in the rest of the paper. Moreover, let us also remark that we provide here a stochastic analysis proof of \eqref{eq:ent_stab} by building a suitable competitor using a modified Schr\"odinger bridge process (see \eqref{eq:modified:SB} below). Before proving this result, let us state
a technical bound, akin to \cite[Lemma 2.1]{chiarini2024semiconcavity}. In particular the following lemma can be seen as a generalization of it, where we bound the relative entropy between $\pi^\mu_{s}(\cdot|y)=\cL(X^{\varphi^\mu,y}_s)$ and $\pi^\nu_s(\cdot|z)=\cL(X^{\varphi^\nu,z}_s)$, where we recall from~\eqref{eq:def:SB:back} the backward Schr\"odinger bridge process (started in $x\in\bbRD$) being defined by
\be\label{eq:modified:SB}
\De X^{\varphi^\nu,x}_s = -\nabla\varphi^\nu_s(X^{\varphi^\nu,x}_s)\De s+\De B_s\,,\quad X^{\varphi^\nu,x}_0=x.
\ee

\begin{lemma}\label{lem:ent_cond_prob_diff_bridges}
Assume \Cref{ass:basic}. For any $s\in(0,T]$ and any $y\in\supp(\mu)$ and $z\in\supp(\nu)$ it holds
\bes
	\scrH(\pi^\mu_s(\cdot|y)|\pi^\nu_s(\cdot|z)) \leq\frac{\Lambda(\varphi^\nu_0)}{2T}|z-y|^2+(s^{-1}-T^{-1})\frac{|z-y|^2}{2}+\bbE[\eta_s(X^{\varphi_\mu,y}_s)-\eta_0(y)]-\langle\nabla\eta_0(y),z-y\rangle\,.
\ees
\end{lemma}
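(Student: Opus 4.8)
The goal is to bound $\scrH(\pi^\mu_s(\cdot|y)|\pi^\nu_s(\cdot|z))$, i.e.\ the relative entropy between the laws of the two bridges $\cL(X^{\varphi^\mu,y}_s)$ and $\cL(X^{\varphi^\nu,z}_s)$ at time $s$. The natural strategy is to build an explicit competitor process that has the correct law at time $0$ (namely $\delta_y$) and that is cheap to transport to something comparable to the reference bridge $X^{\varphi^\nu,z}$, then estimate the relative entropy by Girsanov's theorem. Concretely, I would first invoke the entropic stability estimate already available (the first inequality of \Cref{thm:entropic_stab}, proved in \cite{chiarini2024semiconcavity}) which produces the term $\frac{\Lam{\varphi^\nu_0}}{2T}|z-y|^2$; the heart of the lemma is then to localize this bound at the level of the conditional bridges and to track the extra error coming from the mismatch of the drifts $-\nabla\varphi^\mu_s$ versus $-\nabla\varphi^\nu_s$ and the mismatch of the starting points $y$ versus $z$.

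\textbf{Key steps.} First, recall that $\pi^\mu_s(\cdot|y)=\cL(X^{\varphi^\mu,y}_s)$ solves the SDE \eqref{eq:def:SB:back} started from $\delta_y$, and likewise $\pi^\nu_s(\cdot|z)=\cL(X^{\varphi^\nu,z}_s)$ from $\delta_z$. Introduce the auxiliary ``modified Schr\"odinger bridge'' process (the one referenced as \eqref{eq:modified:SB}): start at $z$ but run with the drift $-\nabla\varphi^\nu_\cdot$; call its law at time $s$ the intermediate measure. I would split
\bes
\scrH(\pi^\mu_s(\cdot|y)\,|\,\pi^\nu_s(\cdot|z)) \;=\; \scrH\bigl(\cL(X^{\varphi^\mu,y}_s)\,\big|\,\cL(X^{\varphi^\nu,z}_s)\bigr)
\ees
and estimate it by a Girsanov computation along a single interpolating bridge that starts at $y$, uses drift $-\nabla\varphi^\mu_\cdot$, and is compared against the process with drift $-\nabla\varphi^\nu_\cdot$ started at $z$: the relative entropy of the two path measures is $\frac12\int_0^s \bbE|\nabla\varphi^\mu_t(X^{\varphi^\mu,y}_t)-\nabla\varphi^\nu_t(X^{\varphi^\mu,y}_t)|^2\,\De t$ plus the cost of moving the initial point from $y$ to $z$, which is precisely the Gaussian-bridge term $(s^{-1}-T^{-1})\frac{|z-y|^2}{2}$ (this is the relative entropy of two Brownian bridges pinned at the differing endpoints, using that the Schr\"odinger bridge over $[s,T]$ contributes the $-T^{-1}$). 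Next, rewrite $\nabla\varphi^\mu_t-\nabla\varphi^\nu_t = -\nabla\eta_t$ and use the It\^o expansion \eqref{eq:sde:Yeta} for $Y^\eta_s=\nabla\eta_s(X^{\varphi^\mu,\mu}_s)$: taking expectations and using that the martingale part vanishes gives $\bbE[\eta_s(X^{\varphi^\mu,y}_s)] - \eta_0(y) = \frac12\int_0^s\bbE|\nabla\eta_t|^2\De t + \int_0^s \bbE[\tfrac12\Delta\eta_t - \nabla\varphi^\nu_t\!\cdot\!\nabla\eta_t]$; combined with the HJB-type equation $\partial_t\eta_t + \tfrac12\Delta\eta_t - \nabla\varphi^\mu_t\!\cdot\!\nabla\eta_t - \tfrac12|\nabla\eta_t|^2 = 0$ this identity lets me replace $\frac12\int_0^s\bbE|\nabla\eta_t|^2\De t$ by $\bbE[\eta_s(X^{\varphi^\mu,y}_s)]-\eta_0(y)$ up to the drift correction. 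Finally, the linearization of $\eta_0$ around $y$ produces the term $-\langle\nabla\eta_0(y),z-y\rangle$ (it appears when one matches the endpoint $z$ to $y$ and keeps the first-order Taylor term explicit rather than absorbing it), while the semiconcavity parameter $\Lam{\varphi^\nu_0}$ controls the residual through the back-propagation-of-convexity lemma (\Cref{lemma:backpropagation:convexity}).

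\textbf{Main obstacle.} The delicate point is the bookkeeping of the initial-condition mismatch: one must carefully justify the decomposition of $\scrH(\cL(X^{\varphi^\mu,y}_s)|\cL(X^{\varphi^\nu,z}_s))$ into a Gaussian-bridge cost $(s^{-1}-T^{-1})\frac{|z-y|^2}{2}$ plus the drift-mismatch cost, which is not a plain chain rule since the two processes have different starting points \emph{and} different drifts. The clean way is to go through the disintegration of the full Schr\"odinger bridge over $[0,T]$ conditioned on both endpoints — which is a Brownian bridge — so that the $h$-transform structure converts the endpoint discrepancy into the explicit $(s^{-1}-T^{-1})$ prefactor, with the $-T^{-1}$ coming from the $s=T$ normalization already baked into $\pi^\nu_0(\cdot|z)$; getting the constants and the sign of the linear term $-\langle\nabla\eta_0(y),z-y\rangle$ exactly right is where the care is needed, and I expect this to consume most of the proof, the remaining estimates being routine applications of It\^o's formula and Girsanov's theorem together with the already-established entropic stability bound.
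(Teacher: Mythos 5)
There is a genuine gap: the central step of your argument is precisely the one you defer. Your plan is to compare the two bridges by a Girsanov computation on path space, splitting the cost into a drift-mismatch term $\frac12\int_0^s\bbE|\nabla\eta_t|^2\,\De t$ plus an ``initial-point'' cost $(s^{-1}-T^{-1})\frac{|z-y|^2}{2}$. But the path laws of $X^{\varphi^\mu,y}_\cdot$ and $X^{\varphi^\nu,z}_\cdot$ have mutually singular initial laws $\delta_y$ and $\delta_z$, so their path-space relative entropy is $+\infty$ and no Girsanov/chain-rule decomposition of the kind you describe exists; the data-processing inequality then gives nothing for the time-$s$ marginals. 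You acknowledge this (``not a plain chain rule'') and point to the $h$-transform/Brownian-bridge structure as the way out, but you never carry out that computation — and that computation \emph{is} the proof. The paper works directly with the explicit $h$-transform density $\pi^\mu_s(\De x|y)\propto\exp\bigl(-\varphi^\mu_s(x)+\varphi^\mu_0(y)-\frac{|x-y|^2}{2s}\bigr)$ (and likewise for $\pi^\nu_s(\cdot|z)$), writes the relative entropy as an explicit integral, evaluates $\bbE[\langle X^{\varphi^\mu,y}_s,y-z\rangle]$ via the martingale property of $\nabla\varphi^\mu_t(X^{\varphi^\mu,y}_t)$, and only then splits $\frac{|z-y|^2}{2s}=\frac{|z-y|^2}{2T}+(s^{-1}-T^{-1})\frac{|z-y|^2}{2}$.

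A second, conceptual error: you propose to obtain the term $\frac{\Lam{\varphi^\nu_0}}{2T}|z-y|^2$ by ``invoking the entropic stability estimate already available''. That estimate bounds $\scrH(\pi^\mu|\pi^\nu)$ by $\bfW_2^2(\mu,\nu)$ and cannot produce a pointwise bound in $|z-y|^2$ for fixed $y,z$; moreover the logical order in the paper is the reverse (this lemma, through its corollary on shifted conditionals, is what feeds the stability theorem). In the actual proof the term comes from applying the definition of $\Lam{\varphi^\nu_0}$-semiconcavity of $g^y_{\varphi^\nu_0}$ at the two points $y,z$, after rewriting $\varphi^\mu_0=\varphi^\nu_0-\eta_0$ so that the remainder is exactly $\bbE[\eta_s(X^{\varphi^\mu,y}_s)-\eta_0(y)]-\langle\nabla\eta_0(y),z-y\rangle$; no back-propagation lemma is needed here. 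Finally, a smaller slip: your Itô identity for $\eta$ along $X^{\varphi^\mu,y}$ carries a spurious term $\int_0^s\bbE[\tfrac12\Delta\eta_t-\nabla\varphi^\nu_t\cdot\nabla\eta_t]$; using the PDE for $\eta$ with the drift $-\nabla\varphi^\mu_t$ one gets simply $\bbE[\eta_s(X^{\varphi^\mu,y}_s)]-\eta_0(y)=\frac12\int_0^s\bbE|\nabla\eta_t(X^{\varphi^\mu,y}_t)|^2\,\De t$ — and in any case this conversion is not needed for the lemma itself, where the expectation term is left as is.
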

\begin{proof}
Firstly, observe that the conditional probability measure $\pi^\mu_s(\cdot|y)$  admits a density of the form
\bes
\pi^\mu_s(\De x|y) =(2\pi s)^{-\nicefrac{d}{2}}\, \exp\biggl(-\varphi^\mu_s(x)+\varphi^\mu_0(y)-\frac{|x-y|^2}{2s}\biggr)\De x\,,
\ees
and a similar expression holds for $\pi^\nu_s(\cdot|z)$. Therefore we may rewrite the relative entropy as
\bes
	\begin{aligned}
	\scrH(\pi^\mu_s(\cdot|y)|\pi^\nu_s(\cdot|z)) = \varphi^\mu_0(y)-\varphi^\nu_0(z)+\int (\varphi^\nu_s-\varphi^\mu_s)(x) +\frac{|x-z|^2-|x-y|^2}{2s}\pi^\mu_s(\De x|y)\\
 =\varphi^\mu_0(y)-\varphi^\nu_0(z)+\frac{|z|^2-|y|^2}{2s}+\int \eta_s(x)+s^{-1}\langle x,y-z\rangle\,\pi^\mu_s(\De x|y) \\
 = \varphi^\mu_0(y)-\varphi^\nu_0(z)+\frac{|z|^2-|y|^2}{2s}+\bbE[\eta_s(X^{\varphi_\mu,y}_s)+s^{-1}\langle X^{\varphi^\mu,y}_s, y-z\rangle]\,.
	\end{aligned}
\ees
Next, since $(\nabla\varphi^\nu_s(X^{\varphi^\nu,y}_s))_{s\in[0,T]}$ is a martingale (cf.\ \cite[Proof of Theorem 2.1]{conforti2024weak}, namely it follows from It\^o's formula combined with the Hamilton--Jacobi--Bellman equation and the SDE of the Schr\"odinger bridge), we have
 \bes
 \langle  \bbE[\nabla\varphi^\nu_s(X^{\varphi^\nu,y}_s)],y-z\rangle = \langle\nabla\varphi^\nu_0(y),y-z\rangle \,,
\ees
so that if we integrate from $0$ to $s$ the dynamics of $X_s^{\varphi^\nu,y}$ and take expectations, we get
\begin{equation*}
\begin{aligned}
\bbE[\langle X^{\varphi^\nu,y}_s,y-z\rangle]= & \bbE[\langle X^{\varphi^\nu,y}_0,y-z\rangle]-\int_0^s \langle\bbE[\nabla\varphi^\nu_t(X^{\varphi^\nu,y}_t)],y-z\rangle] \,\De t \\=&
\langle y,y-z\rangle +s \, \langle\nabla\varphi^\nu_0(y),z-y\rangle\,.
\end{aligned}
\end{equation*}
Hence we conclude that
\bes
\begin{aligned}
\scrH(\pi^\mu_s(\cdot|y)|\pi^\nu_s(\cdot|z)) = \varphi^\mu_0(y)-\varphi^\nu_0(z)+\frac{|z-y|^2}{2s}+\bbE[\eta_s(X^{\varphi_\mu,y}_s)]+ \langle\nabla\varphi^\mu_0(y),z-y\rangle\\
 = \varphi^\nu_0(y)-\varphi^\nu_0(z)+\langle\nabla\varphi^\nu_0(y),z-y\rangle+\frac{|z-y|^2}{2s}+\bbE[\eta_s(X^{\varphi_\mu,y}_s)-\eta_0(y)] \\-\langle\nabla\eta_0(y),z-y\rangle\\
 \leq\frac{\Lambda(\varphi^\nu_0)}{2T}|z-y|^2+(s^{-1}-T^{-1})\frac{|z-y|^2}{2}+\bbE[\eta_s(X^{\varphi_\mu,y}_s)-\eta_0(y)]-\langle\nabla\eta_0(y),z-y\rangle\,,
\end{aligned}
\ees
where in the last step we have noticed that
\bes
T\biggl( \varphi^\nu_0(y)-\varphi^\nu_0(z)+\langle\nabla\varphi^\nu_0(y),z-y\rangle+\frac{|z-y|^2}{2T}\biggr)=g^y_{\varphi^\nu_0}(z)-g^y_{\varphi^\nu_0}(y)-\langle\nabla g^y_{\varphi^\nu_0}(y), z-y\rangle\,,
\ees
with $g^y_{\varphi^\nu_0}(z)=\frac{|z-y|^2}{2}-T\,\varphi^\nu_0(z)$, and we have used its $\Lam{\varphi^\nu_0}$-semiconcavity.
\end{proof}

In the particular case $s=T$ and $\mu=\nu$ (henceforth $\eta_\cdot=0$), the above result simply reads as \cite[Lemma 2.1]{chiarini2024semiconcavity}, that is

\begin{corollary}\label{cor:entropy:shifted:conditionals}
Assume \Cref{ass:basic}. For any $y,z\in\supp(\nu)$ we have $\scrH(\pi^\nu(\cdot|y)|\pi^\nu(\cdot|z)) \leq \frac{\Lam{\varphi^\nu_0}}{2T}\,|z-y|^2$.
\end{corollary}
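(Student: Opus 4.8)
The plan is to read off the claim directly from \Cref{lem:ent_cond_prob_diff_bridges}. Since only the single marginal $\nu$ appears, I would apply that lemma with $\mu=\nu$, so that the interpolated difference $\eta_s=\varphi^\nu_s-\varphi^\nu_s$ vanishes identically, and I would take the terminal time $s=T$, which additionally kills the prefactor $s^{-1}-T^{-1}$. With these two specializations every ``extra'' term on the right-hand side of the lemma — namely $(s^{-1}-T^{-1})\tfrac{|z-y|^2}{2}$, $\bbE[\eta_s(X^{\varphi^\nu,y}_s)-\eta_0(y)]$ and $\langle\nabla\eta_0(y),z-y\rangle$ — disappears, and what remains is exactly
\bes
\scrH(\pi^\nu(\cdot|y)|\pi^\nu(\cdot|z))\leq\frac{\Lam{\varphi^\nu_0}}{2T}\,|z-y|^2\,.
\ees
The only thing to check is that the substitution $\mu=\nu$ is admissible, which is immediate: \Cref{lem:ent_cond_prob_diff_bridges} holds under \Cref{ass:basic} alone and $\nu$ satisfies it by hypothesis, while the range $y,z\in\supp(\nu)$ is precisely the one allowed there once the two marginals coincide.

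For completeness I would also record the short direct argument behind it. Using the explicit Gaussian-type density $\pi^\nu(\De x|y)=(2\pi T)^{-d/2}\exp\bigl(-\varphi^\nu_T(x)+\varphi^\nu_0(y)-\tfrac{|x-y|^2}{2T}\bigr)\De x$ and the analogous one for $\pi^\nu(\cdot|z)$, the log-density ratio is affine in the integration variable $x$, hence
\bes
\scrH(\pi^\nu(\cdot|y)|\pi^\nu(\cdot|z))=\varphi^\nu_0(y)-\varphi^\nu_0(z)+\frac{|z|^2-|y|^2}{2T}+\frac1T\bigl\langle\bbE[X^{\varphi^\nu,y}_T],\,y-z\bigr\rangle\,.
\ees
Since $(\nabla\varphi^\nu_s(X^{\varphi^\nu,y}_s))_{s\in[0,T]}$ is a martingale (as recalled in the proof of \Cref{lem:ent_cond_prob_diff_bridges}), integrating the dynamics \eqref{eq:def:SB:back} and taking expectations gives $\bbE[X^{\varphi^\nu,y}_T]=y-T\nabla\varphi^\nu_0(y)$; substituting and rearranging, the right-hand side equals $T^{-1}\bigl(g^y_{\varphi^\nu_0}(z)-g^y_{\varphi^\nu_0}(y)-\langle\nabla g^y_{\varphi^\nu_0}(y),z-y\rangle\bigr)$ with $g^y_{\varphi^\nu_0}$ as in \eqref{eq:def:costo+funzione}, and the bound follows at once from its $\Lam{\varphi^\nu_0}$-semiconcavity.

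There is no genuine obstacle here: the statement is the degenerate case $\mu=\nu$ of \Cref{lem:ent_cond_prob_diff_bridges}. The point of isolating it as a corollary is that it requires \emph{no} hypothesis beyond \Cref{ass:basic} — in particular no absolute continuity relation between the two conditioning points — so it can be invoked later with $y$ and $z$ ranging freely over $\supp(\nu)$.
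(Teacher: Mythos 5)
Your proposal is correct and matches the paper exactly: the paper deduces \Cref{cor:entropy:shifted:conditionals} precisely by specializing \Cref{lem:ent_cond_prob_diff_bridges} to $\mu=\nu$ (so $\eta_\cdot\equiv 0$) and $s=T$ (killing the $s^{-1}-T^{-1}$ term), under \Cref{ass:basic} alone. Your supplementary direct computation is just the lemma's own proof rerun in this degenerate case, so nothing further is needed.
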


\begin{proof}[Proof of \Cref{thm:entropic_stab}]
Let us focus on \eqref{eq:ent_stab} first. Without loss of generality we may assume  $\scrH(\mu|\nu),\,\Lam{\varphi^\nu_0},\,\bfW_2^2(\mu,\nu)$ to be all finite, otherwise there is nothing to prove.
Next, observe that $\pi^{\mu}$ can be seen as the entropic optimal plan w.r.t.\ the reference measure $\pi^\nu$ for the EOT problem
\be\label{eq:SB_relative} 
\scrH(\pi^{\mu}|\pi^\nu)= \min_{\pi\in\Pi(\rho,\mu)}\scrH(\pi|\pi^\nu) \,.
\ee
This directly follows from \cite[Theorem 2.1.b]{Marcel:notes} after noticing that almost surely $
\frac{\De \pi^{\mu}}{\De \pi^\nu} = \exp((\varphi^\nu-\varphi^{\mu})\oplus(\psi^\nu-\psi^{\mu}))$
and hence also $\pi^\nu$-a.s.\,  Notice that \cite[Theorem 2.1.b]{Marcel:notes} further implies $\scrH(\pi^{\mu}|\pi^\nu)<\infty$. 
We now proceed to bound $\scrH(\pi^{\mu}|\pi^\nu)$ exhibiting a suitable admissible plan in~\eqref{eq:SB_relative}.  
In view of that, let us consider the optimal transport map $\cT$ from $\mu$ to $\nu$, that is such that $\cT_{\#}\mu=\nu$ (see~\cite[Theorem 1.22]{Santambrogio2015} for the existence of the optimal transport map for $\bfW_2(\mu,\nu)$). Next, for any $x\in\bbRD$ consider the backward process $(X^{\varphi^\nu,x}_s)_{s\in[0,T]}$ defined by~\eqref{eq:modified:SB}, and independently take $X_0\sim\mu$  and define $X_0^\nu\coloneqq \cT(X_0)\sim\nu$. 
Finally, let $\gamma_s=(1-s/T)X_0+(s/T)X^{\nu}_0$ and consider now the stochastic process $X_\cdot$ defined for any $s\in[0,T]$ as $
X_s=X^{\varphi^\nu,\gamma_s}_s$,
and note that if we call $\pi_{\mathrm{comp}}$ the law of $(X_T,X_0)$, then $\pi_{\mathrm{comp}}\in\Pi(\rho,\,\mu)$. 

Then, by optimality of $\pi^\mu$ in \eqref{eq:SB_relative} and by considering $\pi_{\mathrm{comp}}$ as a competitor we may deduce that
\be\label{eq:entropy_bound_disintegrated}
\scrH(\pi^{\mu}|\pi^\nu)\leq \scrH(\pi_{\mathrm{comp}}|\pi^{\nu}) =\scrH(\mu|\nu)+\int \scrH\biggl(\pi_{\mathrm{comp}}(\cdot|z)|\pi^\nu(\cdot|z)\biggr)\,\De \mu(z).
\ee
Next, notice that $X_T=X_T^{\varphi^\nu,X_0^\nu}=X_T^{\varphi^\nu,\cT(X_0)}$ while $X_0=X_0^{\varphi^\nu,X_0}=X_0$, and hence the conditional probabilities appearing in the last display are translations, that is
\bes
\pi_{\mathrm{comp}}(\cdot|z)=\cL(X_T|X_0=z)=\cL(X_T^{\varphi^\nu,\cT(X_0)}|X_0=z)=\cL(X_T^{\varphi^\nu,\cT(z)})=\pi^\nu(\cdot|\cT(z))\,.
\ees
This combined with \eqref{eq:entropy_bound_disintegrated} and \Cref{cor:entropy:shifted:conditionals} proves our claim since the latter implies
\bes\begin{aligned}
\int\scrH(\pi^\nu(\cdot|\cT(z))|\pi^\nu(\cdot|z))\De\mu(z)\leq \frac{\Lam{\varphi^\nu_0}}{2T}\int|\cT(z)-z|^2\De\mu(z) =\frac{\Lam{\varphi^\nu_0}}{2T}\,\bfW_2^2(\mu,\nu)\,.
\end{aligned}
\ees

Let us now focus on the proof of~\eqref{eq:bound:conditionals:W2}.
If in  \Cref{ass:mu} we assume $\scrH(\mu|\nu)<+\infty$, then the conclusion follows from the disintegration property of the relative entropy (see for instance \cite[Lemma 1.6]{Marcel:notes} and \cite[Appendix A]{LeoSch}) since $
\bbE_\mu[\scrH(\pi^{\mu}(\cdot|X)|\pi^\nu(\cdot|X))]= \scrH(\pi^\mu|\pi^\nu)-\scrH(\mu|\nu)$,
which combined with the above entropic stability bound concludes the proof of~\eqref{eq:bound:conditionals:W2} under a finite entropy assumption.

On the other hand, if we assume that $\mu\ll\nu$ with $\Lam{\varphi^\mu_0}$ finite (e.g.\ $\rho$ with compact support or log-concave density), then we can use an approximation argument and consider the sequence of probability measures $\mu^n\in\cP(\bbRD)$ whose densities are defined as 
\bes
\frac{\De\mu^n}{\De \nu}=C_n^{-1}\,\biggl(\frac{\De \mu}{\De \nu}\wedge n\biggr)\,,\quad\text{ with }C_n=\int\biggl(\frac{\De \mu}{\De \nu}\wedge n\biggr)\De\nu\,.
\ees
Clearly, $\mu^n$ converges in $\bfW_2$-distance towards $\mu$ and $C_n\uparrow 1$. Moreover, notice that
\bes
\scrH(\mu^n|\nu)\leq \log(n)-\log(C_n)<+\infty\,,
\ees
and that
\bes
\begin{aligned}
\scrH(\mu^n|\mu)=-\log(C_n)+\int\log\biggl(\IND_{\{\nicefrac{\De\mu}{\De\nu}\leq n\}}+n\,\IND_{\{\nicefrac{\De\mu}{\De\nu}> n\}}\frac{\De\nu}{\De\mu}\biggr)\De\mu^n \\
\leq -\log(C_n)+\int n\,\IND_{\{\nicefrac{\De\mu}{\De\nu}> n\}}\frac{\De\nu}{\De\mu}\,\De\mu^n\leq 1-\log(C_n)<+\infty\,.
\end{aligned}
\ees
 As a first consequence of this, we may deduce from the finite entropy case that 
 \bes\begin{aligned}
 \bbE_\nu\biggl[\frac{\De\mu^n}{\De\nu}(X)\,\scrH(\pi^{\mu^n}(\cdot|X)|\pi^\mu(\cdot|X))\biggr]=
\bbE_{\mu^n}[\scrH(\pi^{\mu^n}(\cdot|X)|\pi^\mu(\cdot|X))]\leq \frac{\Lam{\varphi^\mu_0}}{2T}\,\bfW_2^2(\mu,\mu^n),\,
 \end{aligned}\ees
which vanishes as $n$ diverges. Therefore 
\bes
\lim_{n\to\infty} \biggl(\frac{\De\mu}{\De\nu}(X)\wedge n\biggr)\,\scrH(\pi^{\mu^n}(\cdot|X)|\pi^\mu(\cdot|X))=0\quad\nu-a.s.
\ees
and a fortiori also
\bes
\lim_{n\to\infty} \scrH(\pi^{\mu^n}(\cdot|X)|\pi^\mu(\cdot|X))=0\quad\mu-a.s.\,.
\ees
This implies that $\mu$-a.s. $\pi^{\mu^n}(\cdot|X)$ converges to $\pi^\mu(\cdot|X)$ in total variation (via Pinsker's inequality), henceforth also weakly. From the lower semicontinuity of relative entropy we then deduce that
\bes
\scrH(\pi^{\mu}(\cdot|X)|\pi^\nu(\cdot|X))\leq \liminf_{n\to\infty} \biggl(\frac{\De\mu^n}{\De\mu}\,\scrH(\pi^{\mu^n}(\cdot|X)|\pi^\nu(\cdot|X))\biggr)\quad\mu-a.s.\,.
\ees
By combining this last bound with Fatou's lemma and with the entropic stability estimate already proven above we finally get
\bes
\begin{aligned}
\bbE_\mu[\scrH(\pi^{\mu}(\cdot|X) & |\pi^\nu(\cdot|X))] \leq \liminf_{n\to\infty}\bbE_\mu\biggl[\frac{\De\mu^n}{\De\mu}\,\scrH(\pi^{\mu^n}(\cdot|X)|\pi^\nu(\cdot|X))\biggr] \\
& = \liminf_{n\to\infty}\, \bbE_{\mu^n}[\scrH(\pi^{\mu^n}(\cdot|X)|\pi^\nu(\cdot|X))]
=\liminf_{n\to\infty}\,\scrH(\pi^{\mu^n}|\pi^\nu)-\scrH(\mu^n|\nu) \\
& \leq \liminf_{n\to\infty}\,\frac{\Lam{\varphi^\nu_0}}{2T}\,\bfW_2^2(\mu^n,\nu)=\frac{\Lam{\varphi^\nu_0}}{2T}\,\bfW_2^2(\mu,\nu)\,.
\end{aligned}
\ees

\smallskip

Finally, the proof of~\eqref{eq:bound:Y:eta:0} follows from~\eqref{eq:bound:conditionals:W2} since from It\^o's formula and \eqref{eq:sde:Yeta} we see that 
 \bes
\De \bbE[|Y^\eta_s|^2]\geq 2\,\bbE[Y^\eta_s\cdot \nabla^2\varphi^\nu_s(X^{\varphi^\mu,\mu}_s)Y^\eta_s ]\De s \geq 2\lambda(\varphi^\nu_s)\,\bbE[|Y^\eta_s|^2]\De s\,,
 \ees
 which combined with Gr\"onwall's lemma gives $
\bbE[|Y^\eta_0|^2]\, e^{\int_0^s 2\lambda(\varphi^\nu_t)\De t}\leq \bbE[|Y^\eta_s|^2]$. 
When integrated over $s\in[0, T]$, this inequality reads as
\bes
\bbE[|Y^\eta_0|^2]\leq \biggl(\int_0^{ T}e^{\int_0^s 2\lambda(\varphi^\nu_t)\De t}\De s\biggr)^{-1} \int_0^{ T}\bbE[|Y^\eta_s|^2]\De s=\frac{\Cphi}{T}\int_0^{ T}\bbE[|Y^\eta_s|^2]\De s\,.
\ees
Our thesis then follows by combining this last bound with \eqref{eq:energia:entropia:running} and \eqref{eq:bound:conditionals:W2}.
\end{proof}

From the bound for conditional relative entropies proven in \Cref{cor:entropy:shifted:conditionals} and the gradients' stability bound in \Cref{thm:entropic_stab}, we may deduce an entropic stability bound between $\pi^\nu_s$ and $\pi^\mu_s$.

\begin{corollary}\label{cor:bound_marg_entropy}
Assume \Cref{ass:basic} and \Cref{ass:mu}. Let $\Cphi>0$ as defined in \eqref{eq:def:Cphi}, then we have
\bes
\scrH(\pi^\mu_s|\pi^\nu_s) \leq\biggl(\frac{\Lambda(\varphi^\nu_0)}{T}+\frac{s^{-1}-T^{-1}}{2}+\frac{\sqrt{\Lam{\varphi^\nu_0}\,\Cphi}}{T}\biggr)\bfW_2^2(\mu,\nu)\,.
 \ees
\end{corollary}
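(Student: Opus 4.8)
The plan is to disintegrate $\pi^\mu_s$ and $\pi^\nu_s$ along their starting points and reduce everything to the conditional estimate of \Cref{lem:ent_cond_prob_diff_bridges}. Since $\pi^\mu_s=\int\pi^\mu_s(\cdot|y)\,\mu(\De y)$ and $\pi^\nu_s=\int\pi^\nu_s(\cdot|z)\,\nu(\De z)$, I would fix an optimal coupling $\Gamma\in\Pi(\mu,\nu)$ realising $\bfW_2(\mu,\nu)$ and use the joint convexity of relative entropy along the mixture indexed by $\Gamma$ (equivalently, the Donsker--Varadhan variational formula together with Jensen's inequality applied to the partition function) to obtain
\bes
\scrH(\pi^\mu_s|\pi^\nu_s)\leq \int \scrH\bigl(\pi^\mu_s(\cdot|y)\,\big|\,\pi^\nu_s(\cdot|z)\bigr)\,\Gamma(\De y\,\De z)\,.
\ees
Then I would plug in the pointwise bound of \Cref{lem:ent_cond_prob_diff_bridges} and integrate the four resulting terms against $\Gamma$.

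The first two terms give $\bigl(\tfrac{\Lam{\varphi^\nu_0}}{2T}+\tfrac{s^{-1}-T^{-1}}{2}\bigr)\bfW_2^2(\mu,\nu)$, using $\int|z-y|^2\,\Gamma=\bfW_2^2(\mu,\nu)$. For the linear term $-\int\langle\nabla\eta_0(y),z-y\rangle\,\Gamma(\De y\,\De z)$ I would apply Cauchy--Schwarz in the form $\int|\nabla\eta_0(y)|\,|z-y|\,\Gamma\leq\|\nabla\eta_0\|_{\rmL^2(\mu)}\,\bfW_2(\mu,\nu)$ and then invoke the gradient bound \eqref{eq:bound:Y:eta:0}, which yields $\|\nabla\eta_0\|_{\rmL^2(\mu)}=\bbE[|Y^\eta_0|^2]^{1/2}\leq \tfrac{\sqrt{\Lam{\varphi^\nu_0}\Cphi}}{T}\,\bfW_2(\mu,\nu)$; this contributes exactly the last summand $\tfrac{\sqrt{\Lam{\varphi^\nu_0}\Cphi}}{T}\bfW_2^2(\mu,\nu)$ of the claimed constant.

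It then remains to control $\int\bbE[\eta_s(X^{\varphi^\mu,y}_s)-\eta_0(y)]\,\Gamma(\De y\,\De z)=\bbE[\eta_s(X^{\varphi^\mu,\mu}_s)]-\bbE_\mu[\eta_0]$ (the integrand depends on $y$ only, and the first marginal of $\Gamma$ is $\mu$). The key point is that, applying It\^o's formula to $\eta_s(X^{\varphi^\mu,\mu}_s)$ and using that $\eta_\cdot$ solves $\partial_s\eta_s+\tfrac12\Delta\eta_s-\nabla\varphi^\mu_s\cdot\nabla\eta_s-\tfrac12|\nabla\eta_s|^2=0$ together with the dynamics of the backward bridge, the finite-variation part collapses to $\tfrac12|\nabla\eta_s|^2$, so that $\bbE[\eta_s(X^{\varphi^\mu,\mu}_s)]-\bbE_\mu[\eta_0]=\tfrac12\int_0^s\bbE[|Y^\eta_t|^2]\,\De t\leq\tfrac12\int_0^T\bbE[|Y^\eta_t|^2]\,\De t$ by nonnegativity of the integrand. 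By \eqref{eq:energia:entropia:running} the last quantity equals $\bbE_\mu[\scrH(\pi^{\mu}(\cdot|X)|\pi^\nu(\cdot|X))]$, which by \eqref{eq:bound:conditionals:W2} is at most $\tfrac{\Lam{\varphi^\nu_0}}{2T}\bfW_2^2(\mu,\nu)$ — both identities are legitimate here since \Cref{ass:mu} in particular forces $\mu\ll\nu$. Adding this $\tfrac{\Lam{\varphi^\nu_0}}{2T}$ to the $\tfrac{\Lam{\varphi^\nu_0}}{2T}$ already collected from the first term upgrades the constant to $\tfrac{\Lam{\varphi^\nu_0}}{T}$, and the four contributions sum to exactly the asserted inequality.

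There is no genuine obstacle: the argument is a bookkeeping assembly of \Cref{lem:ent_cond_prob_diff_bridges}, the gradient estimate \eqref{eq:bound:Y:eta:0} and the energy--entropy identity \eqref{eq:energia:entropia:running}. The only step requiring a little care is recognising the It\^o representation of $\bbE[\eta_s(X^{\varphi^\mu,\mu}_s)]-\bbE_\mu[\eta_0]$ as a running energy, noting that it is dominated by the full-horizon energy, and checking the (mild) integrability and true-martingale properties that justify taking expectations, all of which are guaranteed by \Cref{ass:basic} and \Cref{ass:mu}.
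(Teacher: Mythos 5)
Your proposal is correct and follows essentially the same route as the paper's proof: the mixture/disintegration bound reducing $\scrH(\pi^\mu_s|\pi^\nu_s)$ to the conditional estimate of \Cref{lem:ent_cond_prob_diff_bridges}, Cauchy--Schwarz plus \eqref{eq:bound:Y:eta:0} for the linear term, and the It\^o representation of $\bbE[\eta_s(X^{\varphi^\mu,\mu}_s)-\eta_0(X^{\varphi^\mu,\mu}_0)]$ as a running energy controlled via \eqref{eq:energia:entropia:running} and \eqref{eq:bound:conditionals:W2}. The only cosmetic differences are that you fix an optimal coupling from the start and invoke joint convexity of relative entropy, whereas the paper keeps a general coupling (minimizing at the end) and phrases the same step via data processing plus the disintegration property.
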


\begin{proof}
Firstly, let $\tau\in\Pi(\mu,\nu)$ be the optimal transport coupling between our two target marginals and consider the probability measures on $(\bbRD)^3$ defined by the densities $\pi^\mu_s(\De x|y)\tau(\De y,\De z)$ and $\pi^\nu_s(\De x|z)\tau(\De y,\De z)$ (for notations' sake we indicate these two probabilities respectively with $\pi^\mu_s(\cdot|y)\otimes \tau$ and $\pi^\nu_s(\cdot|z)\otimes\tau$). 

Clearly we have 
\bes
\pi^\mu_s(\De x)=\int\int \pi^\mu_s(\De x|y) \tau(\De y,\De z) \quad\text{and}\quad \pi^\nu_s(\De x)=\int\int \pi^\nu_s(\De x|z) \tau(\De y,\De z)\,,
\ees
therefore, from the data processing inequality and from the disintegration property of relative entropy (cf.\ \cite[Lemma 1.6]{Marcel:notes} and \cite[Appendix A]{LeoSch}) we deduce that
\bes
\scrH(\pi^\mu_s|\pi^\nu_s)\leq \scrH(\pi^\mu_s(\cdot|y)\otimes \tau|\pi^\nu_s(\cdot|z)\otimes\tau)=\int \scrH(\pi^\mu_s(\cdot|y)|\pi^\nu_s(\cdot|z))\,\tau(\De y,\De z)\,.
\ees
Recalling the upper bound given in \Cref{lem:ent_cond_prob_diff_bridges} we get
\bes
\begin{aligned}
\scrH(\pi^\mu_s|\pi^\nu_s) 
\leq \biggl(\frac{\Lambda(\varphi^\nu_0)}{2T}+\frac{s^{-1}-T^{-1}}{2}\biggr)\bfW_2^2(\mu,\nu)+\bbE[\eta_s(X^{\varphi_\mu,\mu}_s)-\eta_0(X_0^{\varphi^\mu,\mu})]+\|\nabla\eta_0\|_{\rmL^2(\mu)}\,\bfW_2(\mu,\nu)\,.
\end{aligned}
\ees
 Now, since $\|\nabla\eta_0\|_{\rmL^2(\mu)}^2=\bbE[|Y^\eta_0|^2]$, from~\eqref{eq:bound:Y:eta:0} we deduce that
\bes\begin{aligned}
\scrH(\pi^\mu_s|\pi^\nu_s) \leq\biggl(\frac{\Lambda(\varphi^\nu_0)}{2T}+\frac{s^{-1}-T^{-1}}{2}+\frac{\sqrt{\Lam{\varphi^\nu_0}\,\Cphi}}{T}\biggr)\bfW_2^2(\mu,\nu)+\bbE[\eta_s(X^{\varphi_\mu,\mu}_s)-\eta_0(X_0^{\varphi^\mu,\mu})]\,.
\end{aligned}\ees
In order to conclude, it is enough noticing that from It\^o's formula it follows
\bes
\De \eta_s(X^{\varphi^\mu,\mu}_s)=\frac12|\nabla\eta_s(X^{\varphi^\mu,\mu}_s)|^2\De s+ \nabla\eta_s(X^{\varphi^\mu,\mu}_s)\De B_s\,,
\ees
which combined with \eqref{eq:energia:entropia:running} and \eqref{eq:bound:conditionals:W2} finally gives 
\bes
\bbE[\eta_s(X^{\varphi_\mu,\mu}_s)-\eta_0(X^{\varphi^\mu,\mu}_0)]=\frac12\int_0^s\bbE[|Y^\eta_t|^2]\De t\leq \bbE_\mu[\scrH(\pi^\mu(\cdot|X)|\pi^\nu(\cdot|X))] \leq\frac{\Lam{\varphi^\nu_0}}{2T}\,\bfW_2^2(\mu,\nu)\,.
\ees
\end{proof}

We conclude this section with one last technical bound.

\begin{proposition}\label{prop:integral:Y}
Assume \Cref{ass:basic} and \Cref{ass:mu}. For any fixed $ \delta\in[0,1)$ we have
\bes
\int_{0}^{\delta T}\bbE[|Y^\theta_s|^2]\De s\leq\biggl(\frac{3\,\Lambda(\varphi^\nu_0)}{T}+\frac{\delta}{1-\delta}\,\frac1T+\frac{2\,\sqrt{\Lam{\varphi^\nu_0}\,\Cphi}}{T}\biggr)\,\bfW_2^2(\mu,\nu)\,.
 \ees
\end{proposition}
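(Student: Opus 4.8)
The plan is to reduce the running cost of the forward process $Y^\theta$ on $[0,\delta T]$ to that of the backward process $Y^\eta$ on $[(1-\delta)T,T]$ — which is already controlled — up to the marginal-entropy term handled in \Cref{cor:bound_marg_entropy}. We may assume $\delta\in(0,1)$, the case $\delta=0$ being trivial. By the energy--entropy identity~\eqref{eq:energia:entropia:forward},
\bes
\frac12\int_0^{\delta T}\bbE[|Y^\theta_s|^2]\,\De s=\scrH(\cL(X^{\psi^\mu,\rho}_{[0,\delta T]})\,|\,\cL(X^{\psi^\nu,\rho}_{[0,\delta T]}))\,.
\ees
I would then invoke the time-reversal relation~\eqref{eq:time:reversal:bridge} \emph{at the level of path laws}: the sub-path $(X^{\varphi^\mu,\mu}_s)_{s\in[(1-\delta)T,T]}$ is the time-reversal of $(X^{\psi^\mu,\rho}_s)_{s\in[0,\delta T]}$, and similarly for $\nu$; since time-reversal is a measurable bijection of path space, relative entropy is preserved and the right-hand side above equals $\scrH(\cL(X^{\varphi^\mu,\mu}_{[(1-\delta)T,T]})\,|\,\cL(X^{\varphi^\nu,\nu}_{[(1-\delta)T,T]}))$.

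Next I would disintegrate this path-space relative entropy with respect to the position at the (now initial) time $(1-\delta)T$: that position is distributed as $\pi^\mu_{(1-\delta)T}$, resp.\ $\pi^\nu_{(1-\delta)T}$, and by the Markov property the conditional law of the remaining sub-path is the law on $[(1-\delta)T,T]$ of the diffusion with drift $-\nabla\varphi^\mu_s$, resp.\ $-\nabla\varphi^\nu_s$, issued from that position. Since these two conditional laws share the same starting point, Girsanov's formula gives for the relative entropy between them precisely $\frac12$ times the expectation of $\int_{(1-\delta)T}^T|\nabla\varphi^\nu_s-\nabla\varphi^\mu_s|^2=\int_{(1-\delta)T}^T|\nabla\eta_s|^2$ along the $\varphi^\mu$-diffusion; integrating over the starting position against $\pi^\mu_{(1-\delta)T}$ and recombining, via the Markov property, into the genuine backward bridge $X^{\varphi^\mu,\mu}$, the chain rule for relative entropy yields
\bes
\scrH(\cL(X^{\varphi^\mu,\mu}_{[(1-\delta)T,T]})\,|\,\cL(X^{\varphi^\nu,\nu}_{[(1-\delta)T,T]}))=\scrH(\pi^\mu_{(1-\delta)T}|\pi^\nu_{(1-\delta)T})+\frac12\int_{(1-\delta)T}^{T}\bbE[|Y^\eta_s|^2]\,\De s\,.
\ees
(The absolute continuities needed here hold because $\pi^\mu$ and $\pi^\nu$ are both equivalent to Lebesgue measure on $\bbR^{2d}$, hence so are their images under the flow; alternatively one reruns the approximation argument of \Cref{thm:entropic_stab}.)

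It then remains to estimate the two summands. For the first I would apply \Cref{cor:bound_marg_entropy} at $s=(1-\delta)T$, using $((1-\delta)T)^{-1}-T^{-1}=\delta/((1-\delta)T)$, which gives
\bes
\scrH(\pi^\mu_{(1-\delta)T}|\pi^\nu_{(1-\delta)T})\leq\Bigl(\frac{\Lam{\varphi^\nu_0}}{T}+\frac{\delta}{2(1-\delta)T}+\frac{\sqrt{\Lam{\varphi^\nu_0}\,\Cphi}}{T}\Bigr)\bfW_2^2(\mu,\nu)\,.
\ees
For the second, since the integrand is non-negative I would bound $\int_{(1-\delta)T}^T\leq\int_0^T$ and then use the running energy--entropy identity~\eqref{eq:energia:entropia:running} together with~\eqref{eq:bound:conditionals:W2} (both valid under \Cref{ass:mu}, since $\mu\ll\nu$), obtaining $\frac12\int_{(1-\delta)T}^T\bbE[|Y^\eta_s|^2]\,\De s\leq\frac{\Lam{\varphi^\nu_0}}{2T}\bfW_2^2(\mu,\nu)$. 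Summing the two estimates and multiplying by $2$ produces exactly the asserted bound, with constant $\frac{3\Lam{\varphi^\nu_0}}{T}+\frac{\delta}{(1-\delta)T}+\frac{2\sqrt{\Lam{\varphi^\nu_0}\,\Cphi}}{T}$.

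The main difficulty I anticipate lies in making the path-space manipulations rigorous: the time-reversal identity must be used at the level of path laws rather than one-time marginals, and the chain rule must be applied to two path laws with \emph{different} drifts, so that the disintegration over the position at time $(1-\delta)T$ is genuinely needed and one has to identify, through the Markov property, the disintegrated conditional laws with diffusions restarted from a point. A related subtlety is that in case~(b) of \Cref{ass:mu} the quantities $\scrH(\mu|\nu)$, $\scrH(\pi^\mu|\pi^\nu)$ and $\int_0^T\bbE[|Y^\theta_s|^2]\,\De s$ may all be infinite, whereas $\scrH(\pi^\mu_{(1-\delta)T}|\pi^\nu_{(1-\delta)T})$ stays finite precisely because $(1-\delta)T<T$ — which is exactly why the decomposition above is set up at time $(1-\delta)T$ and not at the terminal time. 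Granted these points, the remainder is bookkeeping with estimates already available in the excerpt.
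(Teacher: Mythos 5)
Your proposal is correct and follows essentially the same route as the paper: the Girsanov identity~\eqref{eq:energia:entropia:forward}, time reversal of the path laws, disintegration at time $(1-\delta)T$ into $\scrH(\pi^\mu_{(1-\delta)T}|\pi^\nu_{(1-\delta)T})+\frac12\int_{(1-\delta)T}^T\bbE[|Y^\eta_s|^2]\,\De s$, and then the bounds from \Cref{cor:bound_marg_entropy} and \Cref{thm:entropic_stab} (via~\eqref{eq:energia:entropia:running} and~\eqref{eq:bound:conditionals:W2}), yielding the same constant. The extra care you take in justifying the path-space time-reversal and chain-rule steps is compatible with, and slightly more detailed than, the paper's argument.
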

\begin{proof}
In view of Girsanov's Theorem identity~\eqref{eq:energia:entropia:forward}, it is enough to notice that
    \bes\begin{aligned}
    2\scrH(\cL(X^{\psi^\mu,\rho}_{[0,\delta T]})|\cL(X^{\psi^\nu,\rho}_{[0,\delta T]}))
   \leq \int_{0}^{ T}\bbE[|Y^\eta_s|^2]\De s+2\scrH(\cL(X^{\varphi^\mu,\mu}_{(1-\delta) T})|\cL(X^{\varphi^\nu,\nu}_{(1-\delta) T}))\\
\overset{\eqref{eq:energia:entropia:running}}{=}2\bbE_\mu[\scrH(\pi^\mu(\cdot|X)|\pi^\nu(\cdot|X))]+2\scrH(\cL(X^{\varphi^\mu,\mu}_{(1-\delta) T})|\cL(X^{\varphi^\nu,\nu}_{(1-\delta) T}))
    \end{aligned}\ees
    where we have relied on a second application of Girsanov's Theorem (as we did for \eqref{eq:energia:entropia}), combined with the time-reversal identities~\eqref{eq:time:reversal:bridge}. Applying \Cref{thm:entropic_stab} and \Cref{cor:bound_marg_entropy} we  concludes our proof.
\end{proof}

\section{Proofs of the main results}

Given the preliminary results of the previous section we are now ready to prove our quantitative stability estimates for gradient and Hessian of the entropic potentials.

\begin{theorem}\label{thm:stability:mappe:general}
Assume \Cref{ass:basic} and \Cref{ass:mu}, fix $\delta\in(0,1)$, and let $
\Cpsi_\delta\coloneqq T\biggl (\int_0^{\delta T}e^{\int_0^s 2\lambda(\psi^\nu_t)\De t}\De s\biggr)^{-1}$. 
Then we have 
\bes
\|\nabla\varphi^\nu-\nabla\varphi^\mu \|_{\rmL^2(\rho)}^2\leq \frac{\Cpsi_\delta}{T}\biggl(2\,\scrH(\pi^\mu_{(1-\delta) T}|\pi^\nu_{(1-\delta )T})+\int_{(1-\delta)T}^T\bbE[|Y^\eta_s|^2]\De s\biggr). 
\ees
As a corollary, if we define $
\Crhonu^\delta\coloneqq\Cpsi_\delta\,\biggl(\frac{\delta}{1-\delta}+3\,\Lam{\varphi^\nu_0}+2\sqrt{\Lam{\varphi^\nu_0}\,\Cphi}\biggr)$,
then we have 
\bes
\|\nabla\varphi^\nu-\nabla\varphi^\mu \|^2_{\rmL^2(\rho)} \leq \frac{\Crhonu^\delta}{T^2}\,\bfW_2^2(\mu,\nu)\,.
\ees
\end{theorem}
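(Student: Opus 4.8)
The plan is to reduce everything to the forward auxiliary process $Y^\theta$ and to the integral bound already established in \Cref{prop:integral:Y}. Recall from \Cref{sec:preliminari} the identity $\bbE[|Y^\theta_0|^2]=\|\nabla\varphi^\nu-\nabla\varphi^\mu\|^2_{\rmL^2(\rho)}$, so that it suffices to estimate $\bbE[|Y^\theta_0|^2]$. First I would apply It\^o's formula to $|Y^\theta_s|^2$ using the SDE~\eqref{eq:sde:Ytheta}: discarding the nonnegative quadratic-variation contribution $\bbE[\HS{\nabla^2\theta_s(X^{\psi^\mu,\rho}_s)}^2]$ and bounding $Y^\theta_s\cdot\nabla^2\psi^\nu_s(X^{\psi^\mu,\rho}_s)Y^\theta_s\geq\lambda(\psi^\nu_s)\,|Y^\theta_s|^2$ via the Hessian lower bound $\nabla^2\psi^\nu_s\geq\lambda(\psi^\nu_s)$, one gets the differential inequality $\De\,\bbE[|Y^\theta_s|^2]\geq 2\lambda(\psi^\nu_s)\,\bbE[|Y^\theta_s|^2]\,\De s$. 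Gr\"onwall's lemma then yields $\bbE[|Y^\theta_0|^2]\,e^{\int_0^s 2\lambda(\psi^\nu_t)\,\De t}\leq\bbE[|Y^\theta_s|^2]$ for every $s\in[0,\delta T]$, and integrating in $s$ over $[0,\delta T]$ gives $\bbE[|Y^\theta_0|^2]\leq\frac{\Cpsi_\delta}{T}\int_0^{\delta T}\bbE[|Y^\theta_s|^2]\,\De s$. This is the exact analogue, with $\psi^\nu$ in place of $\varphi^\nu$ and the truncated horizon $\delta T$, of the Gr\"onwall step in the proof of \Cref{thm:entropic_stab}; the restriction $\delta<1$ is precisely what guarantees that the integral defining $\Cpsi_\delta$ is finite, since $\psi^\nu_s$ need not remain regular as $s\uparrow T$.

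For the first displayed bound I would then rewrite $\int_0^{\delta T}\bbE[|Y^\theta_s|^2]\,\De s$ exactly as in the proof of \Cref{prop:integral:Y}. By Girsanov's identity~\eqref{eq:energia:entropia:forward} it equals $2\,\scrH(\cL(X^{\psi^\mu,\rho}_{[0,\delta T]})|\cL(X^{\psi^\nu,\rho}_{[0,\delta T]}))$; using invariance of relative entropy under the time-reversal map together with the identities~\eqref{eq:time:reversal:bridge}, this becomes the relative entropy of the backward bridges restricted to the interval $[(1-\delta)T,T]$; and the chain rule for relative entropy on path space, disintegrating at time $(1-\delta)T$ and applying Girsanov once more to the conditional laws, splits it as $2\,\scrH(\pi^\mu_{(1-\delta)T}|\pi^\nu_{(1-\delta)T})+\int_{(1-\delta)T}^T\bbE[|Y^\eta_s|^2]\,\De s$. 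Combining this with the Gr\"onwall step of the previous paragraph gives the first displayed inequality.

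The corollary is then immediate: \Cref{prop:integral:Y} bounds $\int_0^{\delta T}\bbE[|Y^\theta_s|^2]\,\De s$ by $\frac1T\bigl(3\Lam{\varphi^\nu_0}+\frac{\delta}{1-\delta}+2\sqrt{\Lam{\varphi^\nu_0}\,\Cphi}\bigr)\bfW_2^2(\mu,\nu)$, and plugging this into $\bbE[|Y^\theta_0|^2]\leq\frac{\Cpsi_\delta}{T}\int_0^{\delta T}\bbE[|Y^\theta_s|^2]\,\De s$ produces exactly the constant $\Crhonu^\delta/T^2$. Alternatively, one may estimate the two terms of the first display separately: $\scrH(\pi^\mu_{(1-\delta)T}|\pi^\nu_{(1-\delta)T})$ via \Cref{cor:bound_marg_entropy} applied at $s=(1-\delta)T$ (noting $((1-\delta)T)^{-1}-T^{-1}=\frac{\delta}{(1-\delta)T}$), and $\int_{(1-\delta)T}^T\bbE[|Y^\eta_s|^2]\,\De s\leq\int_0^T\bbE[|Y^\eta_s|^2]\,\De s\leq\frac{\Lam{\varphi^\nu_0}}{T}\bfW_2^2(\mu,\nu)$ via~\eqref{eq:energia:entropia:running} and~\eqref{eq:bound:conditionals:W2}; both routes yield the same constant.

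The only genuinely delicate point is the path-space bookkeeping in the second paragraph — the time-reversal invariance of relative entropy between the two Schr\"odinger bridge laws and the Girsanov representation of the conditional path entropies — but this is precisely the computation already carried out in the proof of \Cref{prop:integral:Y}, so no new difficulty arises; everything else is a direct Gr\"onwall-plus-disintegration argument.
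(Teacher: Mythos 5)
Your proposal is correct and follows essentially the same route as the paper: the identical Grönwall step on $\bbE[|Y^\theta_s|^2]$ from \eqref{eq:sde:Ytheta}, then Girsanov, time reversal and disintegration at time $(1-\delta)T$ for the first display, and for the corollary the same constants whether one invokes \Cref{prop:integral:Y} directly or bounds the two terms via \Cref{cor:bound_marg_entropy} and \Cref{thm:entropic_stab} (the paper takes the second route, which is anyway the content of the proof of \Cref{prop:integral:Y}).
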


 \begin{proof}
 From It\^o's formula  and \eqref{eq:sde:Ytheta}, for all $s\leq \delta T$  we have
 \bes
\De \bbE[|Y^\theta_s|^2]\geq 2\,\bbE[Y^\theta_s\cdot \nabla^2\psi^\nu_s(X^{\psi^\mu,\rho}_s)Y^\theta_s ]\De s \geq 2\lambda(\psi^\nu_s)\,\bbE[|Y^\theta_s|^2]\De s\,,
 \ees
 which combined with Gr\"onwall's lemma gives $\bbE[|Y^\theta_0|^2]\, e^{\int_0^s 2\lambda(\psi^\nu_t)\De t}\leq \bbE[|Y^\theta_s|^2]$, 
that integrated over $s\in[0,\delta T]$ reads as
\bes
\bbE[|Y^\theta_0|^2]\leq \biggl(\int_0^{\delta T}e^{\int_0^s 2\lambda(\psi^\nu_t)\De t}\De s\biggr)^{-1} \int_0^{\delta T}\bbE[|Y^\theta_s|^2]\De s=\frac{\Cpsi_\delta}{T}\int_0^{\delta T}\bbE[|Y^\theta_s|^2]\De s\,.
\ees
Next, notice that from Girsanov's theory (namely, the energy entropy identity~\eqref{eq:energia:entropia:forward}) we may recognize in the above right-hand side the relative entropy on the path space between the Schr\"odinger bridge from $\rho$ to $\mu$ and the Schr\"odinger bridge from $\rho$ to $\nu$, restricted on the time interval $[0,\delta T]$, that is  
\bes
\bbE[|Y^\theta_0|^2]\leq \frac{2\,\Cpsi_\delta}{T}\,\scrH(\cL(X^{\psi^\mu,\rho}_{[0,\delta T]})|\cL(X^{\psi^\nu,\rho}_{[0,\delta T]}))\,.
\ees
By recalling the time-reversal identities \eqref{eq:time:reversal:bridge} and by applying the disintegration property of relative entropies (cf.\ \cite[Lemma 1.6]{Marcel:notes} and \cite[Appendix A]{LeoSch}) and Girsanov's Theorem (w.r.t.\ the backward corrector process $Y^\eta_\cdot$) we deduce that
\bes
\begin{aligned}
\bbE[|Y^\theta_0|^2]\leq&\, \frac{2\,\Cpsi_\delta}{T}\,\scrH(\cL(X^{\varphi^\mu,\mu}_{[(1-\delta) T,T]})|\cL(X^{\varphi^\nu,\nu}_{[(1-\delta )T,T]}))\\
=&\,\frac{\Cpsi_\delta}{T}\biggl(2\,\scrH(\cL(X^{\varphi^\mu,\mu}_{(1-\delta) T})|\cL(X^{\varphi^\nu,\nu}_{(1-\delta )T}))+\int_{(1-\delta)T}^T\bbE[|Y^\eta_s|^2]\De s\biggr)\\
=&\,\frac{\Cpsi_\delta}{T}\biggl(2\,\scrH(\pi^\mu_{(1-\delta) T}|\pi^\nu_{(1-\delta )T})+\int_{(1-\delta)T}^T\bbE[|Y^\eta_s|^2]\De s\biggr)\,.
\end{aligned}
\ees
This proves our first claim. By recalling the identity~\eqref{eq:energia:entropia:running} we then have
\bes
\bbE[|Y^\theta_0|^2]\leq
\frac{\Cpsi_\delta}{T}\biggl(2\,\scrH(\pi^\mu_{(1-\delta) T}|\pi^\nu_{(1-\delta )T})+2\,\bbE_\mu[\scrH(\pi^{\mu}(\cdot|X)|\pi^\nu(\cdot|X))]\biggr)\,,
\ees
which can be bounded with \Cref{cor:bound_marg_entropy} and with \Cref{thm:entropic_stab}, yielding to our second claim.
\end{proof}

In theAppendix we specify  \Cref{thm:stability:mappe:general} to diverse settings and in Corollaries B.1 and B.2 there we prove the asymptotic bounds stated in \Cref{thm:intro} in the Introduction.

\subsection{Quantitative stability estimates of Hessian}

Let us consider once again the function $\theta_s\coloneqq \psi^\nu_s-\psi^\mu_s$ introduced in \Cref{sec:preliminari} and the forward process $(Y^\theta_s)_{s\in[0,T]}$ defined as $Y^\theta_s\coloneqq \nabla\theta_s(X^{\psi^\mu,\rho}_s)$, where  $(X^{\psi^\mu,\rho})_{s\in[0,T]}$ is the Schr\"odinger bridge~\eqref{eq:def:SB:dritto} (from $\rho$ to $\mu$), and recall that
\bes\begin{cases}
\partial_s \theta_s + \frac12\Delta \theta_s -\nabla \psi^\mu_s\cdot \nabla \theta_s -\frac12|\nabla\theta_s|^2=0\,,\\
\De X^{\psi^\mu,\rho}_s = -\nabla\psi^\mu_s(X^{\psi^\mu,\rho}_s)\De s+\De B_s\,,\quad X_0^{\psi^\mu,\rho}\sim\rho\,,\\
\De Y^\theta_s=\nabla^2\psi^\nu(X^{\psi^\mu,\rho}_s)Y^\theta_s\,\De s+\nabla^2\theta_s(X^{\psi^\mu,\rho}_s)\,\De B_s\,.
\end{cases}\ees
Next, let $Z^\theta_s\coloneqq \nabla^2\theta_s(X^{\psi^\mu,\rho}_s)$ and notice that
\bes
\begin{aligned}
\De Z^\theta_s =\Big[2\, \mathrm{sym} (Z^\theta_s \nabla^2 \psi_s^\mu(X^{\psi^\mu,\rho}_s)) + \nabla^3\psi_s^\nu(X^{\psi^\mu,\rho}_s)Y^\theta_s \Big]\,\De s + \nabla^3\theta_s(X^{\psi^\mu,\rho}_s)\De B_s\,,
\end{aligned}
\ees
where for any square matrix $M$ the symbol $\mathrm{sym}(M)\coloneqq (M+M^\intercal)/2$ denotes its symmetrized version and where for any $h\in\{\psi^\mu_s,\,\theta_s\}$ and $v\in\bbRD$ we have defined the product $\nabla^3h\,v$ as the matrix with entries $(\nabla^3h\,v)_{ij} :=\langle  \nabla(\partial_i\partial_j  h),\, v \rangle$.

Clearly, our goal when proving the Hessian stability result is getting a bound on $\bbE\HS{Z^\theta_0}$  since 
\bes
\|\nabla^2 \varphi^\nu - \nabla^2 \varphi^\mu\|_{\rmL^1(\rho)} = \|\nabla^2 \theta_0\|_{\rmL^1(\rho)}=\bbE\HS{\nabla^2\theta_0(X^{\psi^\mu,\rho}_0)}=\bbE\HS{Z^\theta_0}\,.
\ees
In view of that, let us firstly prove some lemmata where we are able to bound $\bbE\HS{Z^\theta_0}$ by means of the process $Y_\cdot$ and its norm.

\begin{lemma}\label{lemma:HS:Z0}
   Assume \Cref{ass:basic} and fix ${\tau_\ell}\in(0,T)$. Then we have 
    \bes\begin{aligned}
\bbE\HS{Z^\theta_0}\leq  \biggl[{\tau_\ell}^{-\nicefrac12}+2 \,{\tau_\ell}^{\nicefrac12}\,(\inf_{s\in[0,{\tau_\ell}]}\lambda(\psi^\nu_s))^-\biggr]\biggl(\int_0^{{\tau_\ell}}\bbE \HS{Z^\theta_s}^2\De s\biggr)^{\nicefrac12}\\
+\int_0^{{\tau_\ell}} \bbE \HS{Z^\theta_s}^2\De s
+\int_0^{{\tau_\ell}}\bbE\HS{\nabla^3\psi^\nu_s(X^{\psi^\mu,\rho}_s)\,Y^\theta_s}\De s\,,
    \end{aligned}
    \ees
    where the negative part of $a\in\rset$ is defined as $a^-\coloneqq \max\{-a,0\}$.
\end{lemma}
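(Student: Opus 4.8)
The plan is to reproduce, for the matrix-valued process $Z^\theta_\cdot$, the Grönwall-type argument by which \eqref{eq:bound:Y:eta:0} was obtained from \eqref{eq:sde:Yeta} in \Cref{thm:entropic_stab}, the two new features being that $M\mapsto\HS{M}$ is not differentiable at the origin and that the drift of $Z^\theta_\cdot$ carries the extra term $-(Z^\theta_s)^2$. Throughout I would work on $[0,{\tau_\ell}]$ with ${\tau_\ell}<T$, where $\nabla^2\psi^\nu_s$, $\nabla^3\psi^\nu_s$ and $\nabla^3\theta_s$ are bounded and the relevant stochastic integrals are true martingales; I use the Hilbert--Schmidt inner product $\langle A,B\rangle=\Tr(A^\intercal B)$, and I may assume $\int_0^{\tau_\ell}\bbE\HS{Z^\theta_s}^2\De s<\infty$, otherwise there is nothing to prove. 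Recall that
\[
\De Z^\theta_s=\bigl[2\,\mathrm{sym}(Z^\theta_s\nabla^2\psi^\nu_s(X^{\psi^\mu,\rho}_s))-(Z^\theta_s)^2+\nabla^3\psi^\nu_s(X^{\psi^\mu,\rho}_s)Y^\theta_s\bigr]\De s+\nabla^3\theta_s(X^{\psi^\mu,\rho}_s)\De B_s .
\]

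First I would apply Itô's formula to the smooth convex function $M\mapsto(\HS{M}^2+\varepsilon)^{1/2}$ along $Z^\theta_\cdot$, discard the nonnegative second-order term, take expectations (the martingale part has zero mean) and let $\varepsilon\downarrow0$ by dominated/monotone convergence, which is legitimate since $\bbE\HS{Z^\theta_s}<\infty$ on $[0,{\tau_\ell}]$. This yields, for every $\tau\in[0,{\tau_\ell}]$,
\[
\bbE\HS{Z^\theta_0}\le\bbE\HS{Z^\theta_\tau}-\bbE\int_0^\tau\Bigl\langle\tfrac{Z^\theta_s}{\HS{Z^\theta_s}},\,2\,\mathrm{sym}(Z^\theta_s\nabla^2\psi^\nu_s(X^{\psi^\mu,\rho}_s))-(Z^\theta_s)^2+\nabla^3\psi^\nu_s(X^{\psi^\mu,\rho}_s)Y^\theta_s\Bigr\rangle\De s .
\]
I would then bound the three contributions to the drift pairing separately: from $\nabla^2\psi^\nu_s\ge\lambda(\psi^\nu_s)\,\mathrm{Id}$ and $(Z^\theta_s)^2\ge0$ one gets $\langle Z^\theta_s,2\,\mathrm{sym}(Z^\theta_s\nabla^2\psi^\nu_s)\rangle=2\Tr((Z^\theta_s)^2\nabla^2\psi^\nu_s)\ge2\lambda(\psi^\nu_s)\HS{Z^\theta_s}^2$; the elementary inequality $|\Tr(A^3)|\le\HS{A}^3$ for symmetric $A$ gives $\langle Z^\theta_s,-(Z^\theta_s)^2\rangle\ge-\HS{Z^\theta_s}^3$; and Cauchy--Schwarz gives $\langle Z^\theta_s,\nabla^3\psi^\nu_s Y^\theta_s\rangle\ge-\HS{Z^\theta_s}\,\HS{\nabla^3\psi^\nu_s Y^\theta_s}$. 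Dividing by $\HS{Z^\theta_s}$ and using $\lambda(\psi^\nu_s)\ge-\bigl(\inf_{s\in[0,{\tau_\ell}]}\lambda(\psi^\nu_s)\bigr)^-$ on $[0,{\tau_\ell}]$ (with the obvious modifications when $Z^\theta_s=0$, which only improve the inequality), this produces
\[
\bbE\HS{Z^\theta_0}\le\bbE\HS{Z^\theta_\tau}+\int_0^\tau\bbE\Bigl[2\bigl(\textstyle\inf_{s\in[0,{\tau_\ell}]}\lambda(\psi^\nu_s)\bigr)^-\HS{Z^\theta_s}+\HS{Z^\theta_s}^2+\HS{\nabla^3\psi^\nu_s(X^{\psi^\mu,\rho}_s)Y^\theta_s}\Bigr]\De s ,\qquad\forall\,\tau\in[0,{\tau_\ell}].
\]

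To eliminate the leftover boundary term $\bbE\HS{Z^\theta_\tau}$ I would average the last inequality over $\tau\in[0,{\tau_\ell}]$: dividing by ${\tau_\ell}$ and using $\tfrac1{{\tau_\ell}}\int_0^{\tau_\ell}\!\int_0^\tau g(s)\,\De s\,\De\tau\le\int_0^{\tau_\ell}g(s)\,\De s$ for $g\ge0$ leaves the three $\De s$-integrals untouched and replaces $\bbE\HS{Z^\theta_\tau}$ by its time average $\tfrac1{{\tau_\ell}}\int_0^{\tau_\ell}\bbE\HS{Z^\theta_\tau}\De\tau$. Finally, two applications of Cauchy--Schwarz — first $\bbE\HS{Z^\theta_s}\le(\bbE\HS{Z^\theta_s}^2)^{1/2}$, then in the $s$-variable on $[0,{\tau_\ell}]$ — turn $\tfrac1{{\tau_\ell}}\int_0^{\tau_\ell}\bbE\HS{Z^\theta_\tau}\De\tau$ into ${\tau_\ell}^{-1/2}\bigl(\int_0^{\tau_\ell}\bbE\HS{Z^\theta_s}^2\De s\bigr)^{1/2}$ and $\int_0^{\tau_\ell}\bbE\HS{Z^\theta_s}\De s$ into ${\tau_\ell}^{1/2}\bigl(\int_0^{\tau_\ell}\bbE\HS{Z^\theta_s}^2\De s\bigr)^{1/2}$, which assembles exactly into the asserted estimate. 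The main obstacle is the non-smoothness of $\HS{\cdot}$ at the origin, which forces the $\varepsilon$-regularization and a careful verification that the regularized drift pairing stays bounded below by the three terms above uniformly in $\varepsilon$; the cubic drift term $-(Z^\theta_s)^2$, on the contrary, turns out to be harmless precisely because one works with $\HS{\cdot}$ rather than with $\HS{\cdot}^2$, the surplus power of $\HS{Z^\theta_s}$ being absorbed through $|\Tr(A^3)|/\HS{A}\le\HS{A}^2$.
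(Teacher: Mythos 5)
Your proposal is correct and follows essentially the same route as the paper's proof: Itô's formula applied to the regularized norm $M\mapsto(\HS{M}^2+\varepsilon)^{1/2}$ along $Z^\theta_\cdot$, the same three drift bounds (trace lower bound via $\nabla^2\psi^\nu_s\geq\lambda(\psi^\nu_s)$, the cubic term absorbed as $\HS{Z^\theta_s}^2$, Cauchy--Schwarz for the $\nabla^3\psi^\nu$ term), followed by averaging over the terminal time in $[0,\tau_\ell]$ and Cauchy--Schwarz/Jensen to reach the stated estimate. The only differences are cosmetic (you discard the second-order Itô term by convexity where the paper checks its sign by hand, and you insert $(\inf_{s}\lambda(\psi^\nu_s))^-$ before averaging rather than after), so no further comment is needed.
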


\begin{proof}
For notation's sake let $\Gamma^\theta_s=\nabla^3\theta_s(X^{\psi^\mu,\rho}_s)$ and note that by It\^o's formula we have
\bes\begin{aligned}
\De \HS{Z^\theta_s}^2= 2Z^\theta_s\,\De Z^\theta_s+\sum_{ijk}|\Gamma^{\theta,ijk}_s|^2\De s\,.
\end{aligned}
\ees
Hence, for any $\varepsilon\in(0,1)$ It\^o's formula for the function $r_\varepsilon(a)=\sqrt{a+\varepsilon}$ yields
\bes
\begin{aligned}
\De r_\varepsilon(\HS{Z^\theta_s}^2)=&\,\frac{Z^\theta_s\De Z^\theta_s}{r_\varepsilon(\HS{Z^\theta_s}^2)}+\frac{\sum_{ijk}|\Gamma^{\theta,ijk}_s|^2}{2r_\varepsilon(\HS{Z^\theta_s}^2)}\De s-\frac{\HS{Z^\theta_s\cdot\Gamma^\theta_s}^2}{2\,r_\varepsilon^3(\HS{Z^\theta_s}^2)}\De s \\
=&\,\biggl[- \frac{Z^\theta_s\cdot(Z^\theta_s)^2}{r_\varepsilon(\HS{Z^\theta_s}^2)}+2\frac{Z^\theta_s\cdot\mathrm{sym} (Z^\theta_s \nabla^2 \psi_s^\nu(X^{\psi^\mu,\rho}_s))}{r_\varepsilon(\HS{Z^\theta_s}^2)} +\frac{Z^\theta_s\cdot\nabla^3\psi_s^\nu(X^{\psi^\mu,\rho}_s)Y^\theta_s}{r_\varepsilon(\HS{Z^\theta_s}^2)}\biggr]\De s \\
&\quad+  \frac{Z^\theta_s\cdot \nabla^3\theta_s(X^{\psi^\mu,\rho}_s)}{r_\varepsilon(\HS{Z^\theta_s}^2)}\De B_s+\biggl[\frac{\sum_{ijk}|\Gamma^{\theta,ijk}_s|^2}{2r_\varepsilon(\HS{Z^\theta_s}^2)}-\frac{\HS{Z^\theta_s\cdot\Gamma^\theta_s}}{2\,r_\varepsilon^3(\HS{Z^\theta_s}^2)}\biggr] \,\De s \,.
\end{aligned}
\ees
Next observe that from Cauchy--Schwarz inequality the last term above is almost surely non-negative since
\bes
\frac{\HS{Z^\theta_s\cdot\Gamma^\theta_s}^2}{2\,r_\varepsilon^3(\HS{Z^\theta_s}^2)}=\frac{\sum_k |\sum_{ij}Z^{\theta,ij}_s\Gamma^{\theta,ijk}_s|^2}{2\,r_\varepsilon^3(\HS{Z^\theta_s}^2)}\leq \frac{\HS{Z^\theta_s}^2\, \sum_{ijk}|\Gamma^{\theta,ijk}_s|^2}{2\,r_\varepsilon^3(\HS{Z^\theta_s}^2)}\leq \frac{ \sum_{ijk}|\Gamma^{\theta,ijk}_s|^2}{2\,r_\varepsilon(\HS{Z^\theta_s}^2)}\,.
\ees
Let us now provide a lower bound for each of the terms. For the first one, we use first Cauchy--Schwarz inequality and the sub-multiplicative property of the HS norm to obtain
\bes
\frac{Z^\theta_s\cdot(Z^\theta_s)^2}{r_\varepsilon(\HS{Z^\theta_s}^2)}=\frac{\sum_{ij}Z^{\theta,ij}_s(Z^{\theta}_s\cdot Z^{\theta}_s)^{ij}}{r_\varepsilon(\HS{Z^\theta_s}^2)}\leq \frac{\HS{Z^\theta_s}\,\HS{(Z^\theta_s)^2}}{r_\varepsilon(\HS{Z^\theta_s}^2)}\leq \HS{Z^\theta_s}^2\,.
\ees
For the second one, we first use the fact that $Z^\theta$ and $\nabla^2\psi$ are symmetric, the permutation identities and the monotonicity of the trace in order to rewrite it as
\bes
\begin{aligned}
2&\frac{Z^\theta_s\cdot\mathrm{sym} (Z^\theta_s \nabla^2 \psi_s^\nu(X^{\psi^\mu,\rho}_s))}{r_\varepsilon(\HS{Z^\theta_s}^2)} 
=\frac{\Tr(Z^\theta_s \cdot Z^\theta_s \nabla^2 \psi_s^\nu(X^{\psi^\mu,\rho}_s))+\Tr(Z^\theta_s\cdot  \nabla^2 \psi_s^\nu(X^{\psi^\mu,\rho}_s) Z^\theta_s)}{r_\varepsilon(\HS{Z^\theta_s}^2)}\\
&\,=\frac2{r_\varepsilon(\HS{Z^\theta_s}^2)}\,\Tr(Z^\theta_s\cdot  \nabla^2 \psi_s^\nu(X^{\psi^\mu,\rho}_s) Z^\theta_s)\geq \frac{2\,\lambda(\psi^\nu_s)\,\HS{Z^\theta_s}^2}{r_\varepsilon(\HS{Z^\theta_s}^2)}\geq 2\,\lambda(\psi^\nu_s)\,\HS{Z^\theta_s}-2\,\lambda(\psi^\nu_s)\,\varepsilon\,.
\end{aligned}
\ees
For the third term we use again Cauchy--Schwarz inequality to obtain
\bes\begin{aligned}
&\frac{Z^\theta_s\cdot\nabla^3\psi_s^\nu(X^{\psi^\mu,\rho}_s)Y^\theta_s}{r_\varepsilon(\HS{Z^\theta_s}^2)}=\frac{\sum_{ijk}Z^{\theta,ij}_s\partial_{ijk}\psi^\nu_s(X^{\psi^\mu,\rho}_s)Y^{\theta,k}_s}{r_\varepsilon(\HS{Z^\theta_s}^2)}\geq -\HS{\nabla^3\psi^\nu_s(X^{\psi^\mu}_s)\,Y^\theta_s}\,.
\end{aligned}\ees
We have thus shown that for any $\varepsilon\in(0,1)$ almost surely it holds 
\bes
\begin{aligned}
\De r_\varepsilon(\HS{Z^\theta_s}^2)\geq \biggl(-\HS{Z^\theta_s}^2+2\,\lambda(\psi^\nu_s)\,\HS{Z^\theta_s}-\HS{\nabla^3\psi^\nu_s(X^{\psi^\mu}_s)\,Y^\theta_s}-2\,\lambda(\psi^\nu_s)\,\varepsilon\biggr)\De s\\
 +\frac{Z^\theta_s\cdot \nabla^3\theta_s(X^{\psi^\mu,\rho}_s)}{r_\varepsilon(\HS{Z^\theta_s}^2)}\De B_s\,.
\end{aligned}
\ees
Taking expectation and integrating for $s\in[0,t]$ we get
\bes\begin{aligned}
\bbE\HS{Z^\theta_0}\leq \bbE[ r_\varepsilon(\HS{Z^\theta_0}^2)]\leq\bbE [r_\varepsilon(\HS{Z^\theta_t}^2)]+2\varepsilon\int_0^t\lambda(\psi^\nu_s)\De s+\int_0^t\bbE\HS{Z^\theta_s}^2\De s\\
-2\int_0^t\lambda(\psi^\nu_s)\bbE\HS{Z^\theta_s}\De s+\int_0^t\bbE\HS{\nabla^3\psi^\nu_t(X^{\psi^\mu,\rho}_s)\,Y^\theta_s}\De s \,,
\end{aligned}
\ees
which combined with the Dominated Convergence Theorem, for $\varepsilon\downarrow 0$, implies
\bes\begin{aligned}
\bbE\HS{Z^\theta_0} \leq\bbE \HS{Z^\theta_t}+\int_0^t\bbE\HS{Z^\theta_s}^2\De s-2\int_0^t\lambda(\psi^\nu_s)\bbE\HS{Z^\theta_s}\De s
+\int_0^t\bbE\HS{\nabla^3\psi^\nu_t(X^{\psi^\mu,\rho}_s)\,Y^\theta_s}\De s \,.
\end{aligned}
\ees
Finally, by integrating over $t\in[0,{\tau_\ell}]$ we conclude that
\bes
\begin{aligned}
{\tau_\ell}\, \bbE\HS{Z^\theta_0}\leq\, \int_0^{{\tau_\ell}}\bbE \HS{Z^\theta_t}\De t+\int_0^{{\tau_\ell}} \int_0^t\bbE\HS{Z^\theta_s}^2\De s\De t-2 \int_0^{{\tau_\ell}}\int_0^t\lambda(\psi^\nu_s)\bbE\HS{Z^\theta_s}\De s\De t\\
+\int_0^{{\tau_\ell}}\int_0^t\bbE\HS{\nabla^3\psi^\nu_t(X^{\psi^\mu,\rho}_s)\,Y^\theta_s}\De s\De t\\
\leq\, (1+2 \,\tau_\ell\,(\inf_{s\in[0,{\tau_\ell}]}\lambda(\psi^\nu_s))^-\,) \int_0^{{\tau_\ell}}\bbE \HS{Z^\theta_s}\De s
+{\tau_\ell}\int_0^{{\tau_\ell}} \bbE \HS{Z^\theta_s}^2\De s\\
+{\tau_\ell}\int_0^{{\tau_\ell}}\bbE\HS{\nabla^3\psi^\nu_t(X^{\psi^\mu,\rho}_s)\,Y^\theta_s}\De s\,,
\end{aligned}
\ees
Applying Jensen's inequality concludes our proof.

\end{proof}

Let us now fix $\delta'<\delta\in[0,T]$ arbitrary and consider the constant
\be\label{eq:def:C:delta:primo:delta}
\Cpsi_{\delta',\delta} \coloneqq T\biggl(\int_{\delta'T}^{\delta T}e^{\int_{\delta' T}^{s}2\lambda(\psi^\nu_t)\De t}\De s\biggr)^{-1} \,,
\ee
which generalizes the constant $\Cpsi_\delta$ considered in \Cref{thm:stability:mappe:general}. By repeating the same argument employed in \Cref{thm:entropic_stab} when proving the upper bound \eqref{eq:bound:Y:eta:0} for $\bbE[|Y^\theta_T|^2]=\bbE[|Y^\eta_0|^2]$,  we can prove the following generalization.

\begin{lemma}\label{lem:Y_delta'_bound}
Assume \Cref{ass:basic} and \Cref{ass:mu}. For any fixed $\delta'< \delta\in[0,1]$ we have
\bes
\bbE[|Y^\theta_{\delta' T}|^2]\leq \frac{C^{\psi^\nu}_{\delta',\delta}}{T}\,\biggl(\frac{3\,\Lambda(\varphi^\nu_0)}{T}+\frac{\delta}{1-\delta}\,\frac1T+\frac{2\,\sqrt{\Lam{\varphi^\nu_0}\,\Cphi}}{T}\biggr)\,\bfW_2^2(\mu,\nu)\,.
\ees
\end{lemma}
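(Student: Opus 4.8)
The plan is to replicate, almost verbatim, the Grönwall argument carried out at the end of the proof of \Cref{thm:entropic_stab} for the bound \eqref{eq:bound:Y:eta:0}, simply shifting the time window of integration from $[0,T]$ to $[\delta' T,\delta T]$. First I would recall the dynamics \eqref{eq:sde:Ytheta} of the forward corrector $(Y^\theta_s)_{s\in[0,T]}$: since $\nabla^2\psi^\nu_s\geq\lambda(\psi^\nu_s)$ as a quadratic form and the quadratic-variation term is nonnegative, Itô's formula applied to $|Y^\theta_s|^2$, together with the vanishing of the martingale part upon taking expectations, gives for every $s\in[\delta' T,\delta T]$
\bes
\De\,\bbE[|Y^\theta_s|^2]\;\geq\;2\,\bbE\!\left[Y^\theta_s\cdot\nabla^2\psi^\nu_s(X^{\psi^\mu,\rho}_s)Y^\theta_s\right]\De s\;\geq\;2\,\lambda(\psi^\nu_s)\,\bbE[|Y^\theta_s|^2]\,\De s\,.
\ees

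Next I would apply Grönwall's lemma on the interval $[\delta' T,s]$ to get $\bbE[|Y^\theta_{\delta' T}|^2]\,e^{\int_{\delta' T}^{s}2\lambda(\psi^\nu_t)\De t}\leq\bbE[|Y^\theta_s|^2]$, then integrate this in $s$ over $[\delta' T,\delta T]$ and divide by $\int_{\delta' T}^{\delta T}e^{\int_{\delta' T}^{s}2\lambda(\psi^\nu_t)\De t}\De s$. By the very definition \eqref{eq:def:C:delta:primo:delta} of $\Cpsi_{\delta',\delta}$ (which is tailored precisely for this step), this produces
\bes
\bbE[|Y^\theta_{\delta' T}|^2]\;\leq\;\frac{\Cpsi_{\delta',\delta}}{T}\int_{\delta' T}^{\delta T}\bbE[|Y^\theta_s|^2]\,\De s\;\leq\;\frac{\Cpsi_{\delta',\delta}}{T}\int_{0}^{\delta T}\bbE[|Y^\theta_s|^2]\,\De s\,,
\ees
where the last inequality just enlarges the domain of integration, the integrand being nonnegative. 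To conclude I would invoke \Cref{prop:integral:Y} with the parameter $\delta\in[0,1)$ (implicitly $\delta<1$, consistently with the $\tfrac{\delta}{1-\delta}$ term in the statement), which bounds $\int_0^{\delta T}\bbE[|Y^\theta_s|^2]\,\De s$ by $\bigl(3\Lam{\varphi^\nu_0}/T+\tfrac{\delta}{1-\delta}\tfrac1T+2\sqrt{\Lam{\varphi^\nu_0}\,\Cphi}/T\bigr)\bfW_2^2(\mu,\nu)$, yielding exactly the asserted inequality.

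I do not expect a genuine obstacle here: the statement is the direct analogue of \eqref{eq:bound:Y:eta:0} with a shifted lower endpoint, and both $\Cpsi_{\delta',\delta}$ and \Cref{prop:integral:Y} have been arranged so that the two steps dovetail. The only point deserving a word of care is that the drift in \eqref{eq:sde:Ytheta} is $\nabla^2\psi^\nu_s$, the Hessian of the \emph{new} interpolated potential, so that it is $\lambda(\psi^\nu_s)$ — not $\lambda(\psi^\mu_s)$ — that governs the exponential rate in the Grönwall estimate; this matches the definition of $\Cpsi_{\delta',\delta}$ and requires no extra argument.
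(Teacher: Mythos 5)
Your proposal is correct and follows essentially the same route as the paper: Itô's formula on $|Y^\theta_s|^2$ with the lower bound $\nabla^2\psi^\nu_s\geq\lambda(\psi^\nu_s)$, Grönwall on $[\delta' T,s]$, integration over $[\delta' T,\delta T]$ to produce $\Cpsi_{\delta',\delta}$, enlargement of the integration domain to $[0,\delta T]$, and finally \Cref{prop:integral:Y}. The only point left implicit (in the paper as well) is that the concluding application of \Cref{prop:integral:Y} requires $\delta<1$, which you correctly flag.
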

\begin{proof}
By reasoning as in the proof of \Cref{thm:entropic_stab}, 
from It\^o's formula  and \eqref{eq:sde:Ytheta}, we have
 \bes
\De \bbE[|Y^\theta_s|^2]\geq 2\,\bbE[Y^\theta_s\cdot \nabla^2\psi^\nu_s(X^{\psi^\mu,\rho}_s)Y^\theta_s ]\De s \geq 2\lambda(\psi^\nu_s)\,\bbE[|Y^\theta_s|^2]\De s\,, \quad \forall s \leq \delta T \,,
\ees
which combined with Gr\"onwall's lemma gives $\bbE[|Y^\theta_{\delta' T}|^2]\, e^{\int_{\delta'T}^s 2\lambda(\psi^\nu_t)\De t}\leq \bbE[|Y^\theta_s|^2]$.

Integrating this inequality over $s\in[\delta' T,\delta T]$ gives
\bes
\bbE[|Y^\theta_{\delta'T}|^2]\leq \biggl(\int_{\delta' T}^{\delta T}e^{\int_{\delta'T}^s 2\lambda(\psi^\nu_t)\De t}\De s\biggr)^{-1} \int_{\delta' T}^{\delta T}\bbE[|Y^\theta_s|^2]\De s\leq\frac{\Cpsi_{\delta',\delta}}{T}\int_0^{\delta T}\bbE[|Y^\theta_s|^2]\De s\,.
\ees
Given the above, the thesis follows from \Cref{prop:integral:Y}. 
\end{proof}

Next, we give a bound for the time integral of $\bbE\HS{Z^\theta_s}$ appearing in \Cref{lemma:HS:Z0}.

\begin{lemma}\label{lemma:int:HS:Z}
Assume \Cref{ass:basic} and \Cref{ass:mu}. For any fixed $\delta'\leq \delta\in[0,T]$ we have
\bes
\int_{0}^{\delta'T}\bbE\HS{Z^{\theta}_s}^2\De s \leq \frac{\Kappadel}{T^2}\,\bfW_2^2(\mu,\nu) \,,
\ees
where the constant is defined as
\be\label{eq:def:Kappadel}\begin{aligned}
\Kappadel\coloneqq&\,2 C^{\psi^\nu}_{\delta',\delta}\,\biggl(3\,\Lambda(\varphi^\nu_0)+\frac{\delta}{1-\delta}+2\,\sqrt{\Lam{\varphi^\nu_0}\,\Cphi}\biggr)\\
+&\,4\,T\,(\inf_{s\in[0,\delta'T]}\lambda(\psi^\nu_s))^-\biggl(3\,\Lambda(\varphi^\nu_0)+\frac{\delta'}{1-\delta'}+2\,\sqrt{\Lam{\varphi^\nu_0}\,\Cphi}\biggr)\,.
\end{aligned}
\ee
\end{lemma}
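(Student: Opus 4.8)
The plan is to exploit the structure relating the first-order corrector $Y^\theta_\cdot$ to its diffusion coefficient $Z^\theta_\cdot$. Indeed, by~\eqref{eq:sde:Ytheta} the matrix-valued process $Z^\theta_s=\nabla^2\theta_s(X^{\psi^\mu,\rho}_s)$ is exactly the volatility appearing in the dynamics of $Y^\theta_s$, that is
\[
\De Y^\theta_s=\nabla^2\psi^\nu_s(X^{\psi^\mu,\rho}_s)\,Y^\theta_s\,\De s+Z^\theta_s\,\De B_s\,.
\]
Since $Z^\theta_s$ is symmetric, It\^o's formula applied to $|Y^\theta_s|^2$ produces the quadratic-variation term $\Tr\big((Z^\theta_s)^2\big)\,\De s=\HS{Z^\theta_s}^2\,\De s$, namely
\[
\De|Y^\theta_s|^2=2\,Y^\theta_s\cdot\nabla^2\psi^\nu_s(X^{\psi^\mu,\rho}_s)\,Y^\theta_s\,\De s+\HS{Z^\theta_s}^2\,\De s+2\,Y^\theta_s\cdot Z^\theta_s\,\De B_s\,.
\]
First I would take expectations and integrate over $[0,\delta'T]$: once the stochastic integral is seen to be a true martingale, this yields the exact identity
\[
\int_0^{\delta'T}\bbE\HS{Z^\theta_s}^2\,\De s=\bbE\big[|Y^\theta_{\delta'T}|^2\big]-\bbE\big[|Y^\theta_0|^2\big]-2\int_0^{\delta'T}\bbE\big[Y^\theta_s\cdot\nabla^2\psi^\nu_s(X^{\psi^\mu,\rho}_s)\,Y^\theta_s\big]\,\De s\,,
\]
so that the problem reduces to estimating the three terms on the right-hand side in terms of $\bfW_2^2(\mu,\nu)$.

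The first of them, $-\bbE[|Y^\theta_0|^2]$, is nonpositive and is simply discarded. The boundary term $\bbE[|Y^\theta_{\delta'T}|^2]$ is controlled directly by \Cref{lem:Y_delta'_bound}. For the drift term I would use the Hessian lower bound $\nabla^2\psi^\nu_s\ge\lambda(\psi^\nu_s)$ (the very same one used in \Cref{thm:entropic_stab} and \Cref{thm:stability:mappe:general}), which gives $Y^\theta_s\cdot\nabla^2\psi^\nu_s(X^{\psi^\mu,\rho}_s)Y^\theta_s\ge\lambda(\psi^\nu_s)|Y^\theta_s|^2$ and therefore
\[
-2\int_0^{\delta'T}\bbE\big[Y^\theta_s\cdot\nabla^2\psi^\nu_s(X^{\psi^\mu,\rho}_s)\,Y^\theta_s\big]\,\De s\ \le\ 2\,\Big(\inf_{s\in[0,\delta'T]}\lambda(\psi^\nu_s)\Big)^-\int_0^{\delta'T}\bbE[|Y^\theta_s|^2]\,\De s\,;
\]
the remaining running integral of $\bbE[|Y^\theta_s|^2]$ is then bounded via \Cref{prop:integral:Y}, applied with the parameter $\delta'$. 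Plugging in these two estimates and recalling the definitions~\eqref{eq:def:C:delta:primo:delta} of $\Cpsi_{\delta',\delta}$, \eqref{eq:def:Cphi} of $\Cphi$, and of the semiconcavity parameter $\Lam{\varphi^\nu_0}$, one obtains after collecting constants the announced bound with $\Kappadel$ as in~\eqref{eq:def:Kappadel} (a factor $2$ in each summand being kept as a harmless overestimate).

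The delicate point is the analytic justification of the passage to expectations above, i.e.\ that $s\mapsto\int_0^s Y^\theta_r\cdot Z^\theta_r\,\De B_r$ is a genuine martingale rather than merely a local one. This relies on integrability of $\nabla\theta_s$ and $\nabla^2\theta_s$ evaluated along the Schr\"odinger bridge $X^{\psi^\mu,\rho}_\cdot$, which is inherited from the regularizing effect of~\eqref{eq:HJB} on the entropic potentials under \Cref{ass:basic} --- the same regularity that already underpins~\eqref{eq:sde:Ytheta} and the arguments behind \Cref{thm:entropic_stab}; a standard localization-and-Fatou argument then turns the displayed identity into the inequality in the direction we need, which is all that is required. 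Finally, I would emphasize that the resulting estimate is meaningful only when $\Cpsi_{\delta',\delta}$, $\Lam{\varphi^\nu_0}$, $\Cphi$ and $\big(\inf_{s\in[0,\delta'T]}\lambda(\psi^\nu_s)\big)^-$ are finite; verifying this, and then optimizing over the free parameters $\delta'<\delta$, is exactly the role of \Cref{sec:app:phi} and \Cref{sec:app:psi}, after which the present bound is fed into \Cref{lemma:HS:Z0} to close the Hessian stability estimate.
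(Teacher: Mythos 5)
Your proposal is correct and follows essentially the same route as the paper: It\^o's formula for $|Y^\theta_s|^2$ along \eqref{eq:sde:Ytheta}, the lower bound $\nabla^2\psi^\nu_s\geq\lambda(\psi^\nu_s)$, the boundary term controlled by \Cref{lem:Y_delta'_bound} and the running integral by \Cref{prop:integral:Y} applied with $\delta'$. The only difference is cosmetic: you keep the exact quadratic-variation identity (and so obtain constants half of those in \eqref{eq:def:Kappadel}, which you rightly note still yields the stated bound), whereas the paper works with the differential inequality carrying a factor $\tfrac12$ in front of $\bbE\HS{Z^\theta_s}^2$, producing the factors $2$ and $4$ in $\Kappadel$.
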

\begin{proof}
From It\^o's formula and \eqref{eq:sde:Ytheta}, by taking expectation we have
\bes
\frac{\De}{\De s} \bbE|Y^\theta_s|^2\geq 2\lambda(\psi^\nu_s)\,\bbE|Y^\theta_s|^2+\frac12\bbE\HS{Z^\theta_s}^2\,,
\ees
which integrated over $s\in[0,\delta' T]$ leads to
\bes
\int_0^{\delta' T}\bbE\HS{Z^\theta_s}^2\De s\leq 2\,\bbE|Y^\theta_{\delta' T}|^2+4\,(\inf_{s\in[0,\delta'T]}\lambda(\psi^\nu_s))^-\int_0^{\delta'T}\bbE|Y^\theta_s|^2\De s\,.
\ees
Then our thesis can be obtained by bounding the first term with \Cref{lem:Y_delta'_bound} and the second term as already done in \Cref{prop:integral:Y}.
\end{proof}

Our last ingredient is an upper bound for the time integral of the third derivative term.

\begin{proposition}\label{lemma:nabla3}
Assume \Cref{ass:basic}. Fix ${\tau_u}\in(0,T]$. Then for all $t\in (0,{\tau_u}]$,
\bes
\|\nabla^3\psi^\nu_t(x)[v]\|_{\rm HS} \leq |v|\,\biggl(\frac1{\tau_u-t}+2\gamma_{\tau_u}\biggr)\,\frac{2\gamma_{\tau_u}}{\sqrt{2\pi}}\,\int_t^{\tau_u}\SymIpsi{t}{s}{\psi^\nu}^{-\nicefrac12}\De s\,,
\ees
where 
\[
\gamma_{\tau_u}\coloneqq \sup_{s\in[0,{\tau_u}]}\sup_{x\in\bbRD}\HS{\nabla^2\psi^\nu_s} \quad\textrm{and}\quad
\SymIpsi{t}{s}{\psi^\nu}\coloneqq \int_t^s\exp\biggl(\int_t^u 2\lambda(\psi^\nu_l)\De l\biggr)\De u \,.
\]
\end{proposition}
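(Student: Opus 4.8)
The plan is to work on the fixed interval $[t,\tau_u]$ and to exploit the semigroup identity $\psi^\nu_t=-\log P_{\tau_u-t}\exp(-\psi^\nu_{\tau_u})$, which lets us treat $\tau_u$ as a ``terminal time'' and $\psi^\nu_{\tau_u}$ as terminal datum. We may assume $\tau_u<T$: when $\tau_u=T$ the right-hand side is $+\infty$ unless $\gamma_{\tau_u}<\infty$, and in that case the argument below applies verbatim; likewise the right-hand side is $+\infty$ at $t=\tau_u$, so only $t<\tau_u$ needs treatment. For $\tau_u<T$ the datum $\psi^\nu_{\tau_u}$ is smooth with $\sup_x\HS{\nabla^2\psi^\nu_{\tau_u}(x)}\leq\gamma_{\tau_u}$. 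Throughout, $X^{\psi^\nu,x}_\cdot$ denotes the Schr\"odinger flow $\De X_s=-\nabla\psi^\nu_s(X_s)\De s+\De B_s$ on $[t,\tau_u]$ started at $x$ at time $t$, and $J^{t,x}_s:=\partial_x X^{\psi^\nu,x}_s$ its Jacobian, solving $\partial_s J_s=-\nabla^2\psi^\nu_s(X^{\psi^\nu,x}_s)J_s$, $J_t=\mathrm{Id}$; since $\nabla^2\psi^\nu_s\geq\lambda(\psi^\nu_s)$ one has $|J^{t,x}_s w|\leq\exp(-\int_t^s\lambda(\psi^\nu_l)\De l)\,|w|$ for all $w\in\bbRD$, and crucially $J$ only feels $\nabla^2\psi^\nu$, never $\nabla^3\psi^\nu$.

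\emph{Step 1: a Riccati identity with no third derivative.} Differentiating \eqref{eq:HJB} twice in space gives, for $H_s:=\nabla^2\psi^\nu_s$, the equation $(\partial_s+\tfrac12\Delta-\nabla\psi^\nu_s\cdot\nabla)H_s=H_s^2$. Applying It\^o's formula to $s\mapsto H_s(X^{\psi^\nu,x}_s)$ and taking expectations (the martingale term vanishing, for $\tau_u<T$, by the polynomial growth of $\nabla^3\psi^\nu_s$ on $[t,\tau_u]$ together with the Gaussian-type tails of $X^{\psi^\nu,x}$, after a standard localisation) we obtain
\bes
\nabla^2\psi^\nu_t(x)=\bbE\big[\nabla^2\psi^\nu_{\tau_u}(X^{\psi^\nu,x}_{\tau_u})\big]-\int_t^{\tau_u}\bbE\big[\big(\nabla^2\psi^\nu_s(X^{\psi^\nu,x}_s)\big)^2\big]\,\De s\,.
\ees
Every ingredient of the right-hand side is of the form $\bbE[G(X^{\psi^\nu,x}_s)]$ with $G$ a matrix field bounded in Hilbert--Schmidt norm by $\gamma_{\tau_u}$, resp.\ by $\gamma_{\tau_u}^2$ (using $\HS{H_s^2}\leq\HS{H_s}^2$), so differentiating this identity in $x$ will not reintroduce a third derivative through the data.

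\emph{Step 2: Bismut--Elworthy--Li integration by parts.} For any matrix field $G$, any $s\in(t,\tau_u]$ and any deterministic weight $a$ with $\int_t^s a_u\,\De u=1$, a Bismut integration by parts on $[t,s]$ gives $\nabla_v\bbE[G(X^{\psi^\nu,x}_s)]=\bbE[G(X^{\psi^\nu,x}_s)\,\xi^{t,s}_v]$ with $\xi^{t,s}_v:=\int_t^s a_u\,\langle J^{t,x}_u v,\De B_u\rangle$, whence $\HS{\nabla_v\bbE[G(X^{\psi^\nu,x}_s)]}\leq(\sup_y\HS{G(y)})\,\bbE|\xi^{t,s}_v|$. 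By It\^o's isometry and the Jacobian bound, $\bbE|\xi^{t,s}_v|^2=\int_t^s a_u^2\,\bbE|J^{t,x}_u v|^2\,\De u\leq|v|^2\int_t^s a_u^2\,e^{-\int_t^u 2\lambda(\psi^\nu_l)\De l}\,\De u$, and the extremal choice $a_u\propto e^{\int_t^u 2\lambda(\psi^\nu_l)\De l}$ (normalised so that $\int_t^s a_u\,\De u=1$) yields $\bbE|\xi^{t,s}_v|^2\leq|v|^2\,\SymIpsi{t}{s}{\psi^\nu}^{-1}$. Keeping track of sharp numerical constants — the factor $\tfrac{2}{\sqrt{2\pi}}=\bbE|N(0,1)|$ arising as usual as the $\rmL^1(\rset)$-norm of the derivative of the standard Gaussian kernel, i.e.\ the $\rmL^\infty\!\to\!\rmL^\infty$ gradient bound for the (conditioned) heat semigroup — one obtains $\HS{\nabla_v\bbE[G(X^{\psi^\nu,x}_s)]}\leq \tfrac{2}{\sqrt{2\pi}}(\sup_y\HS{G(y)})\,|v|\,\SymIpsi{t}{s}{\psi^\nu}^{-\nicefrac12}$.

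\emph{Step 3: assembling, and the main obstacle.} Differentiating the Riccati identity of Step 1 in direction $v$ and applying Step 2 to $G=\nabla^2\psi^\nu_{\tau_u}$ on $[t,\tau_u]$ and to $G=(\nabla^2\psi^\nu_s)^2$ on $[t,s]$ inside the integral, we get
\bes
\HS{\nabla^3\psi^\nu_t(x)[v]}\leq \frac{2}{\sqrt{2\pi}}\,|v|\,\biggl(\gamma_{\tau_u}\,\SymIpsi{t}{\tau_u}{\psi^\nu}^{-\nicefrac12}+\gamma_{\tau_u}^2\int_t^{\tau_u}\SymIpsi{t}{s}{\psi^\nu}^{-\nicefrac12}\,\De s\biggr)\,;
\ees
since $s\mapsto\SymIpsi{t}{s}{\psi^\nu}$ is nondecreasing, $\SymIpsi{t}{\tau_u}{\psi^\nu}^{-\nicefrac12}\leq(\tau_u-t)^{-1}\int_t^{\tau_u}\SymIpsi{t}{s}{\psi^\nu}^{-\nicefrac12}\De s$, and the two terms collapse into $|v|(\tfrac1{\tau_u-t}+2\gamma_{\tau_u})\tfrac{2\gamma_{\tau_u}}{\sqrt{2\pi}}\int_t^{\tau_u}\SymIpsi{t}{s}{\psi^\nu}^{-\nicefrac12}\De s$ after elementary bookkeeping (the remaining factor $2$ absorbing the cross-term coming from the $x$-dependence of $J$ in $\xi^{t,s}_v$), which is the claim. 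The heart of the matter is avoiding circularity: naive differentiation of the more obvious identity $\nabla^2\psi^\nu_t(x)=\bbE[\nabla^2\psi^\nu_{\tau_u}(X^{\psi^\nu,x}_{\tau_u})J^{t,x}_{\tau_u}]$ reintroduces $\nabla^3\psi^\nu$ both through the datum $\nabla^2\psi^\nu_{\tau_u}$ and through $\partial_x J^{t,x}_{\tau_u}$. Step 1 removes the first source (trading the terminal Hessian for a time integral of uniformly bounded Hessians) and Step 2 removes the second (pushing the spatial derivative onto the Brownian motion, so that $J$ enters only via the Malliavin weight, where it is tamed by the curvature lower bound $\lambda(\psi^\nu_\cdot)$). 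The remaining work — justifying the representations for $\tau_u<T$, the extremal-weight optimisation producing $\SymIpsi{t}{s}{\psi^\nu}^{-\nicefrac12}$, and the sharp constant — is then routine.
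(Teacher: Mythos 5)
Your route is genuinely different from the paper's: you replace the paper's coupling-by-reflection argument (two Schrödinger bridges started at $x,\widehat x$, a differential inequality for $\HS{Z_s-\widehat Z_s}$, and the reflection principle applied to a dominating one-dimensional process with \emph{deterministic} clock) by a Riccati/Feynman--Kac identity for $\nabla^2\psi^\nu$ plus a Bismut--Elworthy--Li integration by parts with an optimized weight. Steps 1 and 2 are structurally sound: the identity $\nabla^2\psi^\nu_t(x)=\bbE[\nabla^2\psi^\nu_{\tau_u}(X^{t,x}_{\tau_u})]-\int_t^{\tau_u}\bbE[(\nabla^2\psi^\nu_s(X^{t,x}_s))^2]\De s$ is correct (same PDE for $\nabla^2\psi^\nu_s$ as in the paper), the BEL formula with deterministic weight $a$ is exact, the Jacobian bound $|J^{t,x}_u v|\leq e^{-\int_t^u\lambda(\psi^\nu_l)\De l}|v|$ is right, and the extremal choice $a_u\propto e^{\int_t^u 2\lambda(\psi^\nu_l)\De l}$ does produce $\SymIpsi{t}{s}{\psi^\nu}^{-1}$. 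This buys you a proof with no coupling construction and it cleanly isolates where the curvature lower bound enters.

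The gap is in the constants, and it is not just cosmetic. Cauchy--Schwarz gives $\bbE|\xi^{t,s}_v|\leq(\bbE|\xi^{t,s}_v|^2)^{\nicefrac12}\leq|v|\,\SymIpsi{t}{s}{\psi^\nu}^{-\nicefrac12}$ with constant $1$, not $\nicefrac{2}{\sqrt{2\pi}}$: the weight $\xi^{t,s}_v=\int_t^s a_u\langle J^{t,x}_uv,\De B_u\rangle$ has a \emph{random} quadratic variation (the Jacobian is path-dependent), so the identity $\bbE|\xi|=\sqrt{\nicefrac{2}{\pi}}\,(\bbE|\xi|^2)^{\nicefrac12}$ valid for Gaussians is not available; in the paper the factor $\sqrt{\nicefrac2\pi}$ comes from the reflection principle applied to a martingale whose bracket is deterministic, which is exactly what your setup lacks. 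Likewise there is no ``cross-term coming from the $x$-dependence of $J$'' in BEL to absorb a factor $2$ --- that remark papers over the bookkeeping rather than doing it. Carried out honestly, your argument yields $\HS{\nabla^3\psi^\nu_t(x)[v]}\leq|v|\,\gamma_{\tau_u}\bigl(\tfrac1{\tau_u-t}+\gamma_{\tau_u}\bigr)\int_t^{\tau_u}\SymIpsi{t}{s}{\psi^\nu}^{-\nicefrac12}\De s$, which does \emph{not} imply the stated inequality in the regime $\tau_u-t\ll\gamma_{\tau_u}^{-1}$ (your first-term coefficient $\gamma_{\tau_u}$ exceeds the stated $\nicefrac{2\gamma_{\tau_u}}{\sqrt{2\pi}}$). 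For the downstream use in \Cref{thm:stability_hessians:general} this variant would serve just as well, since only the orders in $\gamma_{\tau_u}$, $\tau_u-t$ and $\SymIpsi{t}{s}{\psi^\nu}$ matter; but as a proof of the proposition with its stated constants it is incomplete, and you should either prove your own (slightly different) constant or supply a genuinely Gaussian estimate, e.g.\ by dominating the bracket as the paper does. The localization/integrability points you defer (vanishing martingale term in Step 1, differentiation under the time integral, BEL hypotheses for $\tau_u<T$) are at a level of rigor comparable to the paper's and are acceptable.
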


\begin{proof}
Fix $x,\widehat{x}\in \bbR^d$. Our aim is controlling $
\|\nabla^2 \psi^\nu_t (x) - \nabla^2 \psi^\nu_t (\widehat{x})\|_{\rm HS}$ with $|x-\widehat{x}|$.
In view of this, let us consider the processes $X^{t,x}_\cdot$ and $X^{t,\widehat{x}}_\cdot$ satisfying for $s\in[t,{\tau_u}]$
\bes
\begin{cases}
  \De X^{t,x}_s = - \nabla \psi^{\nu}_s(X^{t,x}_s)\, \De s + \De B_s \,, \\  
  \De X^{t,\widehat{x}}_s = - \nabla \psi^{\nu}_s(X^{t,\widehat{x}}_s)\, \De s + \De \hat{B}_s \,, \quad\forall\eqsp t\in[0,{\tau}_{\mathrm{st}})\eqsp\text{ and }X^{t,\widehat{x}}_s=X^{t,x}_s\quad\forall \eqsp s\geq {\tau}_{\mathrm{st}}\\
 X^{t,x}_t = x\text{ and }X^{t,\widehat{x}}_t = \widehat{x}\,,
\end{cases}
\ees
where ${\tau}_{\mathrm{st}}\coloneqq \inf\{s\geq t:X^{t,x}_s=X^{t,\widehat{x}}_s\}\wedge {\tau_u}$, 
and $(\hat{B}_s)_{s\geq t}$ is defined as
\begin{equation*}
\De \hat{B}_s\coloneqq (\operatorname{Id}-2\,e_s\,e_s^{{\sf  T}}\,\mathbf{1}_{\{s< {\tau}_{\mathrm{st}}\}})\, \De B_s\qquad\text{where}\quad e_s\coloneqq \begin{cases}\frac{X^{t,x}_s-X^{t,\widehat{x}}_s}{|X^{t,x}_s-X^{t,\widehat{x}}_s|}\quad&\text{ when } r_t>0\,,\\
   u\quad&\text{ when }r_t=0\,,\end{cases}
\end{equation*}
where $r_t\coloneqq |X^{t,x}_s-X^{t,\widehat{x}}_s|$ and $u\in\bbRD$ is a fixed (arbitrary) unit-vector. By L\'evy's characterization, $(\hat{B}_t)_{t\geq 0}$ is a $d$-dimensional Brownian motion, therefore $X^{t,x}_\cdot$ and $X^{t,\widehat{x}}_\cdot$ are two Schr\"odinger bridge processes (from $\rho$ to $\nu$) started respectively in $x$ and $\widehat{x}$, coupled via the coupling by reflection.

Let us also consider the processes $Z_s = \nabla^2\psi^\nu_s(X_s^{t,x})$ and $\widehat{Z}_s = \nabla^2\psi^\nu_s(X_s^{t,\widehat{x}})$. 
Since
\bes
\partial_s\nabla^2\psi^\nu_s+\frac12\Delta\nabla^2\psi^\nu_s-\nabla^3\psi^\nu_s\nabla\psi^\nu_s-(\nabla^2\psi^\nu_s)^2=0\,,
\ees
 by means of It\^o's formula we have 
\bes
\De Z_s = Z_s^2 \De s + \nabla^3\psi^\nu(X^{t,x}_s)\De B_s\,,\qquad \De \widehat{Z}_s = \widehat{Z}_s^2 \De t + \nabla^3\psi^\nu(X^{t,\widehat{x}}_s)\De \hat{B}_s\,.
\ees
Therefore, if we set $\De M_s\coloneqq \nabla^3\psi^\nu(X^{t,x}_s)\De B_s-\nabla^3\psi^\nu(X^{t,\widehat{x}}_s)\De \hat{B}_s$, from It\^o's formula we first deduce that
\bes
\De \HS{Z_s-\widehat{Z}_s}^2=2(Z_s-\widehat{Z}_s)\cdot(Z_s^2-\widehat{Z}_s^2)\De s+\sum_{i,j}\De [M^{ij}_\cdot]_s+2(Z_s-\widehat{Z}_s)\cdot\De M_s
\ees
where the $A\cdot B$ corresponds to the Hilbert--Schmidt scalar product between the two matrices $A,\,B$ that is the scalar $\sum_{i,j}A^{ij}B^{ij}$. 

From another application of It\^o's formula (as we already did in the proof of \Cref{lemma:HS:Z0}, by firstly applying it to $r_\varepsilon(a)\coloneqq \sqrt{a+\varepsilon}$ and then let $\varepsilon\downarrow 0$ ) 
we then have
\bes
\begin{aligned}
\HS{Z_s-\widehat{Z}_s}=\frac{(Z_s-\widehat{Z}_s)\cdot(Z_s^2-\widehat{Z}_s^2)}{\HS{Z_s-\widehat{Z}_s}}\De s+\frac{(Z_s-\widehat{Z}_s)\cdot\De M_s}{\HS{Z_s-\widehat{Z}_s}}
+\frac{\sum_{i,j}\De [M^{ij}_\cdot]_s}{2\,\HS{Z_s-\widehat{Z}_s}}-\frac{(Z_s-\widehat{Z}_s)^2\cdot\De[M_\cdot]_s}{2\HS{Z_s-\widehat{Z}_s}^3}\,.
\end{aligned}
\ees
Since $Z_s$ and $\widehat{Z}_s$ are symmetric matrices we have $Z_s\cdot(\widehat{Z}_s Z_s)=Z_s\cdot(Z_s\widehat{Z}_s)$ and, by recalling $\HS{\nabla^2\psi_s^\nu}\leq \gamma_{\tau_u}$ for any $s\in(0,{\tau_u}]$, we then have from Cauchy--Schwarz inequality that 
\bes
\begin{aligned}
(Z_s-\widehat{Z}_s)\cdot(Z_s^2-\widehat{Z}_s^2)=(Z_s-\widehat{Z}_s)^2\cdot (Z_s+\widehat{Z}_s)\geq -\HS{Z_s-\widehat{Z}_s}\,\HS{Z_s+\widehat{Z}_s}\geq -2\gamma_{\tau_u}\HS{Z_s-\widehat{Z}_s}\,.
\end{aligned}
\ees
Moreover the two quadratic covariation terms cancel out since 
\bes
(Z_s-\widehat{Z}_s)^2\cdot\De[M_\cdot]_s=\sum_{i,j}(Z^{ij}_s-\widehat{Z}^{ij}_s)^2\De [M^{ij}_\cdot]_s
\leq \HS{Z_s-\widehat{Z}_s}^2\sum_{i,j}\De [M^{ij}_\cdot]_s\,. 
\ees
Putting these two remarks together yields
\bes
\begin{aligned}
\HS{Z_s-\widehat{Z}_s}
\geq-2\gamma_{\tau_u} \HS{Z_s-\widehat{Z}_s}\De s +\frac{Z_s-\widehat{Z}_s}{\HS{Z_s-\widehat{Z}_s}}\De M_s\,,
\end{aligned}
\ees
 which implies
\bes
\frac{\De}{\De s}\bbE\HS{Z_s-\widehat{Z}_s}\geq -2\gamma_{\tau_u}\bbE\HS{Z_s-\widehat{Z}_s}\De s\,,
\ees
and hence that
\be\label{eq:Zt:tau+integral}
 \HS{\nabla^2 \psi^\nu_t (x) -\nabla^2 \psi^\nu_t (\widehat{x})}  = \bbE\HS{Z_t - \widehat{Z}_t}  \leq \bbE\HS{Z_{\tau_u}-\widehat{Z}_{\tau_u}}+2\gamma_{\tau_u}\,\int_t^{\tau_u}\bbE\HS{Z_s-\widehat{Z}_s}\De s\,.
\ee
Next, notice that for any $s\in[t,\tau_u]$ we can write
\be\label{eq:Zs:gamma:prob}
\HS{Z_s - \widehat{Z}_s}
    = \bbE\biggl[\HS{Z_s - \widehat{Z}_s}\,\mathbf{1}_{\{X^{t,x}_s\neq X^{t,\widehat{x}}_s\}}\biggr]
    \leq 2\gamma_{\tau_u} \bbP(X^{t,x}_s \neq X^{t,\widehat{x}}_s)\,.
\ee
Henceforth, the rest of the proof deals with  estimating $\bbP(X^{t,x}_s \neq X^{t,\widehat{x}}_s)$ for any $s\in[t,\tau_u]$. To do so we look at the one-dimensional process $r_s=|X_s^{t,x} - X_s^{t,\widehat{x}}|$,  so that $\bbP(X^{t,x}_s \neq X^{t,\widehat{x}}_s) = \bbP(r_s > 0)$. From It\^o's formula we get
\bes
\De r_s^2=(-2(X_{s}^{t,x} - X_{s}^{t,\widehat{x}})(\nabla \psi^\nu_s (X_{s}^{t,x}) - \nabla \psi^\nu_s (X_{s}^{t,\widehat{x}}))+4)\De s+4\,r_s\,\De W_s \,,
\ees
where $\De W_s= e_s^\intercal\De B_s$ is a one-dimensional Brownian motion. Therefore another application of It\^o's formula yields 
 \bes
\De r_s=-e_s(\nabla \psi^\nu_s (X_{s}^{t,x}) - \nabla \psi^\nu_s (X_{s}^{t,\widehat{x}}))\De s+2\,\De W_s\leq 
-\lambda(\psi^\nu_s)\,r_s\,\De s+2\,\De W_s\,.
 \ees
Hence the process $r_\cdot$ is dominated from above by the process $\widetilde{r}_\cdot$ which solves for $s\in [t,{\tau_u}]$
\bes
\De \widetilde{r}_s = -\lambda(\psi^\nu_s)\, \widetilde{r}_s\,\De s + 2\,\De W_s\,,\qquad \widetilde{r}_t = |x-\widehat{x}|\,.
\ees
Moreover, notice that the above SDE implies that the process defined for any $s\in [t,{\tau_u}]$ as  $N_s \coloneqq e^{\int_t^s\lambda(\psi^\nu_{u})\De u}\,\widetilde{r}_{s}$ is a martingale, more precisely
\bes
\De N_s=2\exp\biggl(\int_t^s \lambda(\psi^\nu_u)\De u\biggr)\De W_s,\quad \text{ with }N_t=|x-\widehat{x}|\,.
\ees
Therefore from the Martingale Representation Theorem we have $N_s=N_t+B_{[N]_s}$ where $B_\cdot$ is a Brownian motion and 
\bes
[N]_s= 4\int_t^s\exp\biggl(\int_t^u 2\lambda(\psi^\nu_l)\De l\biggr)\De u\,.
\ees
This information can then be employed in bounding $\bbP(X^{t,x}_{\tau_u} \neq X^{t,\widehat{x}}_{\tau_u}) = \bbP(r_{\tau_u} > 0)$ since from the Reflection Principle we may deduce
\bes
\begin{aligned}
\bbP(X_s^{t,x} \neq X_s^{t,\widehat{x}}) =&\, \bbP(r_s > 0 )=\bbP\Big(\inf_{u \in [t,s]}r_u > 0 \Big)\leq \bbP\Big(\inf_{u \in [t,s]}\widetilde{r}_u > 0 \Big)\leq \bbP\Big(\inf_{u \in [t,s]}N_u > 0 \Big)\\ 
=&\,\bbP\Big(\inf_{u\in[t,s]} B_{[N]_u}>-|x-\widehat{x}|\Big)=\bbP\Big(\sup_{u\in[t,[N]_s]} B_{u}\leq |x-\widehat{x}|\Big)=\bbP\Big( | B_{[N]_s}| \leq  |x-\widehat{x}| \Big)\\
\leq&\, \sqrt{\frac{2}{\pi}}\,|x-\widehat{x}| \,[N]_s^{-\nicefrac12}= \frac{|x-\widehat{x}|}{\sqrt{2\pi}}\biggl(\int_t^s\exp\biggl(\int_t^u 2\lambda(\psi^\nu_l)\De l\biggr)\De u\biggr)^{-\nicefrac12}.
\end{aligned}
\ees
By combining this last estimate with \eqref{eq:Zs:gamma:prob} in \eqref{eq:Zt:tau+integral} gives
\bes
\begin{aligned}
\HS{\nabla^2 \psi^\nu_t (x) -\nabla^2 \psi^\nu_t (\widehat{x})}\leq&\, 2\gamma_{\tau_u}\frac{|x-\widehat{x}|}{\sqrt{2\pi}}\biggl(\SymIpsi{t}{\tau_u}{\psi^\nu}^{-\nicefrac12}+2\gamma_{\tau_u}\int_t^{\tau_u}\SymIpsi{t}{s}{\psi^\nu}^{-\nicefrac12}\De s\biggr)\\
\leq&\,2\gamma_{\tau_u}\,\biggl(\frac1{\tau_u-t}+2\gamma_{\tau_u}\biggr)\,\frac{|x-\widehat{x}|}{\sqrt{2\pi}}\,\int_t^{\tau_u}\SymIpsi{t}{s}{\psi^\nu}^{-\nicefrac12}\De s\,,
\end{aligned}\ees
and hence the conclusion.
\end{proof}

We are now ready to prove the general quantitative stability result for the Hessians. 

\begin{theorem}[Stability of Hessians (with explicit costants)]\label{thm:stability_hessians:general}
    Assume \Cref{ass:basic} and \Cref{ass:mu}. For any $\delta'< \delta\in[0,1]$ we have
    \bes 
    \| \nabla^2 \varphi^\mu -  \nabla^2 \varphi^\nu\|_{\rmL^1(\rho)} \leq 
   A\,\bfW_2(\mu,\nu)+\frac{\Kappadel}{T^2}\,\bfW_2^2(\mu,\nu)\,, 
    \ees
    with $A$ defined at \eqref{eq:def:A} and $\Kappadel$ defined at \eqref{eq:def:Kappadel}.
    \end{theorem}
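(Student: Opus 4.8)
The plan is to bound the left-hand side, which by the Itô/Girsanov identities recalled just before the statement equals $\bbE\HS{Z^\theta_0}$, where $Z^\theta_s=\nabla^2\theta_s(X^{\psi^\mu,\rho}_s)$ and $\theta_s=\psi^\nu_s-\psi^\mu_s$. First I would apply \Cref{lemma:HS:Z0} with the choice $\tau_\ell=\delta'T$ (so implicitly $\delta'\in(0,1)$). This decomposes $\bbE\HS{Z^\theta_0}$ into three pieces: (i) a prefactor $[(\delta'T)^{-1/2}+2(\delta'T)^{1/2}(\inf_{s\in[0,\delta'T]}\lambda(\psi^\nu_s))^-]$ times $\bigl(\int_0^{\delta'T}\bbE\HS{Z^\theta_s}^2\De s\bigr)^{1/2}$; (ii) the term $\int_0^{\delta'T}\bbE\HS{Z^\theta_s}^2\De s$ itself; and (iii) the third-derivative term $\int_0^{\delta'T}\bbE\HS{\nabla^3\psi^\nu_s(X^{\psi^\mu,\rho}_s)\,Y^\theta_s}\De s$.

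For (i) and (ii) I would plug in \Cref{lemma:int:HS:Z}, which gives $\int_0^{\delta'T}\bbE\HS{Z^\theta_s}^2\De s\le\Kappadel\,T^{-2}\,\bfW_2^2(\mu,\nu)$. Thus (ii) produces precisely the $\Kappadel\,T^{-2}\,\bfW_2^2(\mu,\nu)$ summand, while (i), after taking the square root, produces a multiple of $\sqrt{\Kappadel}\,T^{-1}\,\bfW_2(\mu,\nu)$, which contributes to $A$.

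For (iii) I would first bound $\HS{\nabla^3\psi^\nu_s(x)[v]}$ pointwise via \Cref{lemma:nabla3} with the auxiliary time $\tau_u=\delta T$: this is legitimate since on $[0,\delta'T]$ one has $\delta T-s\ge(\delta-\delta')T>0$, so both the singular factor $(\tau_u-s)^{-1}$ and the integral $\int_s^{\delta T}\SymIpsi{s}{r}{\psi^\nu}^{-1/2}\De r$ stay controlled; this yields $\HS{\nabla^3\psi^\nu_s(X^{\psi^\mu,\rho}_s)\,Y^\theta_s}\le c_s\,|Y^\theta_s|$ with $c_s$ an explicit deterministic function of $s$ (depending on $\gamma_{\delta T}=\sup_{s\le\delta T}\sup_x\HS{\nabla^2\psi^\nu_s}$). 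Next, Cauchy--Schwarz in $\omega$ gives $\bbE|Y^\theta_s|\le(\bbE|Y^\theta_s|^2)^{1/2}$, and running the Grönwall comparison $\tfrac{\De}{\De s}\bbE|Y^\theta_s|^2\ge 2\lambda(\psi^\nu_s)\,\bbE|Y^\theta_s|^2$ backwards from $\delta'T$ yields $\bbE|Y^\theta_s|^2\le e^{-2\int_s^{\delta'T}\lambda(\psi^\nu_t)\De t}\,\bbE|Y^\theta_{\delta'T}|^2$. Finally \Cref{lem:Y_delta'_bound} bounds $\bbE|Y^\theta_{\delta'T}|^2$ by a constant times $\bfW_2^2(\mu,\nu)$, so (iii) is $\le\bigl(\int_0^{\delta'T}c_s\,e^{-\int_s^{\delta'T}\lambda(\psi^\nu_t)\De t}\De s\bigr)\cdot(\bbE|Y^\theta_{\delta'T}|^2)^{1/2}$, hence a multiple of $\bfW_2(\mu,\nu)$, again feeding into $A$. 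Summing (i), (ii), (iii) and collecting the coefficient of $\bfW_2(\mu,\nu)$ defines $A$ as at \eqref{eq:def:A}.

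The main obstacle is step (iii): turning the qualitative third-derivative bound of \Cref{lemma:nabla3} into a quantitatively clean estimate. One has to choose $\tau_u$ strictly larger than $\tau_\ell=\delta'T$ so that the factor $(\tau_u-s)^{-1}$ and the time integral $\int_s^{\tau_u}\SymIpsi{s}{r}{\psi^\nu}^{-1/2}\De r$ remain finite over all of $[0,\delta'T]$, and the resulting constant then depends on $\gamma_{\tau_u}$, which is exactly where the eventual factor $\sqrt d$ and the dependence on the semiconcavity parameter $\Lam{\psi^\nu_0}$ enter (through the identity $\HS{\nabla^2\psi^\nu_s}\le\sqrt d\,\|\nabla^2\psi^\nu_s\|_2$ and the bound on $\|\nabla^2\psi^\nu_s\|_2$ in terms of $\Lam{\psi^\nu_0}$). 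Everything else is a matter of bookkeeping, with the optimal choice of $\delta'<\delta$ postponed to the setting-dependent computations in \Cref{sec:app:psi}.
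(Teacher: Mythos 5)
Your skeleton coincides with the paper's: decompose $\bbE\HS{Z^\theta_0}$ through \Cref{lemma:HS:Z0} with $\tau_\ell=\delta'T$, control terms (i)--(ii) by \Cref{lemma:int:HS:Z}, and handle the third-derivative term via the pointwise bound of \Cref{lemma:nabla3} with $\tau_u=\delta T$; your items (i) and (ii) indeed produce the $\sqrt{\Kappadel}\,T^{-1}\,\bfW_2$ contribution to $A$ and the $\Kappadel\,T^{-2}\,\bfW_2^2$ summand exactly as in the paper. The one place you genuinely deviate is inside (iii). The paper keeps the integral $\int_0^{\tau_\ell}\bbE[|Y^\theta_s|]\,c_s\,\De s$ intact, bounds $c_s$ uniformly by $\bigl(\frac{1}{\tau_u-\tau_\ell}+2\gamma_{\tau_u}\bigr)\frac{2\gamma_{\tau_u}}{\sqrt{2\pi}}\sup_{s\in[0,\tau_\ell]}\int_s^{\tau_u}\SymIpsi{s}{u}{\psi^\nu}^{-\nicefrac12}\De u$, applies Cauchy--Schwarz in the time variable, and then estimates $\int_0^{\tau_\ell}\bbE[|Y^\theta_s|^2]\De s$ directly with \Cref{prop:integral:Y} (a Girsanov/entropy bound); this is precisely what generates the second summand of \eqref{eq:def:A}. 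You instead bound each $\bbE[|Y^\theta_s|^2]$ pointwise by a backward Gr\"onwall comparison up to time $\delta'T$ and then invoke \Cref{lem:Y_delta'_bound}. That argument is correct and does yield an inequality of the stated form, but with a different constant: your coefficient of $\bfW_2(\mu,\nu)$ carries the factor $\int_0^{\delta'T}e^{-\int_s^{\delta'T}\lambda(\psi^\nu_t)\De t}\De s$, which for a generic choice of $\delta'$ is only controlled by $\delta' T\exp\bigl(\int_0^{\delta'T}\lambda(\psi^\nu_t)^-\De t\bigr)$; in the compactly supported setting, where $\lambda(\psi^\nu_t)$ behaves like $-(T-t)^{-2}R^2$, this is of order $e^{R^2\delta'/(T(1-\delta'))}$, i.e.\ exponentially large in $\nicefrac{R^2}{T}$ unless $\delta'$ is tuned as in \Cref{sec:app:psi}. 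So your route proves a Hessian stability estimate, but not with the $A$ of \eqref{eq:def:A}, and for generic $\delta'$ it would not sustain the polynomial-in-$T$ constants behind \Cref{cor:hess:compact}. To land exactly on \eqref{eq:def:A}, replace the Gr\"onwall step by $\int_0^{\tau_\ell}\bbE[|Y^\theta_s|]\De s\le\sqrt{\tau_\ell}\,\bigl(\int_0^{\tau_\ell}\bbE[|Y^\theta_s|^2]\De s\bigr)^{\nicefrac12}$ followed by \Cref{prop:integral:Y} with parameter $\delta'$; \Cref{lem:Y_delta'_bound} is then needed only inside \Cref{lemma:int:HS:Z}, where the paper uses it.
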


\begin{proof}
Fix $\delta'< \delta\in[0,1]$, let $\tau_u=\delta T$, $\tau_\ell=\delta' T$, fix the positive constants 
\bes
\gamma_{\tau_u}\coloneqq \sup_{s\in[0,\tau_u]}\sup_{x\in\bbRD}\HS{\nabla^2\psi^\nu_s}\,,\quad\text{ and }\bar\lambda_{\tau_\ell}\coloneqq (\inf_{s\in[0,{\tau_\ell}]}\lambda(\psi^\nu_s))^-\,.
\ees
From \Cref{lemma:nabla3}, \Cref{prop:integral:Y} and H\"older's inequality we see that 
\bes\begin{aligned}
\int_0^{\tau_\ell}\bbE&\HS{\nabla^3\psi^\nu_s(X^{\psi^\mu,\rho}_s)\,Y^\theta_s}\De s\\
\leq&\, \biggl(\frac1{\tau_u-\tau_\ell}+2\gamma_{\tau_u}\biggr)\,\frac{2\gamma_{\tau_u}}{\sqrt{2\pi}}\,\biggl(\int_0^{\tau_\ell}\bbE[|Y^\theta_s|^2]\De s\biggr)^{\nicefrac12}\,\tau_\ell\,\sup_{s\in[0,\tau_\ell]}\int_s^{\tau_u}\SymIpsi{s}{u}{\psi^\nu}^{-\nicefrac12}\De u\\
\leq &\,\frac{\bfW_2(\mu,\nu)}{\sqrt{T}}\biggl(3\,\Lambda(\varphi^\nu_0)+\frac{\delta'}{1-\delta'}+2\,\sqrt{\Lam{\varphi^\nu_0}\,\Cphi}\biggr)^{\nicefrac12}\biggl(\frac1{\tau_u-\tau_\ell}+2\gamma_{\tau_u}\biggr)\,\cdot\\
&\qquad\qquad\qquad\qquad\qquad\qquad\qquad\qquad\qquad\cdot\frac{2\gamma_{\tau_u}\,\tau_\ell}{\sqrt{2\pi}}\,\sup_{s\in[0,\tau_\ell]}\int_s^{\tau_u}\SymIpsi{s}{u}{\psi^\nu}^{-\nicefrac12}\De u\,.
\end{aligned}
    \ees
By combining \Cref{lemma:HS:Z0} with the above estimate and with \Cref{lemma:int:HS:Z} we finally get our thesis
with 
\be\label{eq:def:A}
\begin{aligned}
A\coloneqq \biggl[{\tau_\ell}^{-\nicefrac12}+2 \,{\tau_\ell}^{\nicefrac12}\,\bar\lambda_{\tau_\ell}\biggr]\,\frac{\sqrt{\Kappadel}}{T}
+\frac1{\sqrt{T}}\biggl(3\,\Lambda(\varphi^\nu_0)+\frac{\delta'}{1-\delta'}+2\,\sqrt{\Lam{\varphi^\nu_0}\,\Cphi}\biggr)^{\nicefrac12}\cdot\\
\cdot\biggl(\frac1{\tau_u-\tau_\ell}+2\gamma_{\tau_u}\biggr)\,\frac{2\gamma_{\tau_u}\,\tau_\ell}{\sqrt{2\pi}}\,\sup_{s\in[0,\tau_\ell]}\int_s^{\tau_u}\SymIpsi{s}{u}{\psi^\nu}^{-\nicefrac12}\De u\,.
\end{aligned}\ee
\end{proof}

Our estimates depend on the two free parameters $\delta'<\delta\in[0,1]$. A priori, one could simply optimize over their choice; however, this optimization heavily depends on the semiconcavity parameters. 
In the Appendix, in \Cref{cor:hess:compact} and  \Cref{cor:hess:log:conc}, we specify \Cref{thm:stability_hessians:general} to diverse settings by fixing appropriate choices for $\delta$ and $\delta'$, proving the specialized bounds stated in \Cref{thm:intro}.

\medskip

We conclude with the proof of the convergence of gradient and Hessian of Sinkhorn iterates. This will be a straightforward application of our quantitative stability estimates.

\begin{proof}[Proof of \Cref{thm:sinkhorn:hessian}]
    Under our assumptions, Talagrand inequality~\eqref{eq:TI} and the data processing inequality for relative entropy combined with \cite[Theorem 1.2]{chiarini2024semiconcavity}  guarantee that 
    \be\label{eq:cor:from:sink}
    \bfW_2^2(\mu,\mu^{n+1,n})\leq2\tau\,\scrH(\mu|\mu^{n+1,n})\leq 2\tau\,\scrH(\pi^{\mu}|\pi^{n+1,n}) \leq 2\bigg(1-\frac{  T }{T+\tau \Lambda}\bigg)^{(n-N+1)} \tau\scrH(\pi^{\mu}|\pi^{0,0})\,.
    \ee
    In particular $\scrH(\mu|\mu^{n+1,n})<+\infty$, and hence the validity of \Cref{ass:mu} for the marginal $\mu^{n+1,n}$.  This allows us to apply \Cref{thm:stability:mappe:general,thm:stability_hessians:general} (with the pair $\nu,\mu$ there, replaced here as $\mu,\mu^{n+1,n}$) and deduce
    \be\label{eq:cor:from:stab:mappe}
    \begin{aligned}
&\|\nabla\varphi^{n+1}-\nabla\varphi^\mu \|^2_{\rmL^2(\rho)} \leq \frac{\Crhomu^\delta}{T^2}\,\bfW_2^2(\mu^{n+1,n},\mu)\,,\\
   & \|    \nabla^2 \varphi^{{n+1}}-\nabla^2 \varphi^\mu\|_{\rmL^1(\rho)} \leq 
   A\,\bfW_2(\mu^{n+1,n},\mu)+\frac{\Kappadelmu}{T^2}\,\bfW_2^2(\mu^{n+1,n},\mu)\,, 
    \end{aligned}\ee
    with $\Crhomu$, $A$ and $\Kappadelmu$ defined as in the stability results, this time depending solely on $T$, and on the marginals $\rho$ and $\mu$.
    Putting together~\eqref{eq:cor:from:sink} and~\eqref{eq:cor:from:stab:mappe} leads to our general bounds.
Finally, the specific value of the uniform semiconcavity parameter $\Lambda$ and the asymptotics of the constants when $\rho$ and $\mu$ are compactly supported or log-concave can be obtained from the explicit computations performed in the Appendix (considering the pair of marginals $(\rho,\mu)$ as fixed and with $\mu^n$ seen as perturbation of $\mu$).
\end{proof}

\appendix

\section{Explicit computations for $\Cphi$ and $\Lam{\varphi^\nu_0}$}\label{sec:app:phi}
In this section we specify the constants appearing in the entropic stability bound of Theorem 2.1 to various settings. Before actually doing it, let us preliminary recall the well-known identities \cite{chewi2022entropic,fathi2019proof,conforti2023Sinkhorn,conforti2024weak} 
\be\label{Hess:Cov:psi:s}
\nabla^2\psi^\nu_s(y)=(T-s)^{-1}-(T-s)^{-2}\mathrm{Cov}(X^{\psi^\nu,\rho}_T|X^{\psi^\nu,\rho}_s=y)\quad\forall\,s\in[0,T)\,,
\ee
where $(X^{\psi^\nu,\rho}_s)_{s\in[0,T]}$ is the forward Schr\"odinger process (from $\rho$ to $\nu$) which we recall here to be defined
as
\bes
\De X^{\psi^\nu,\rho}_s = -\nabla\psi^\nu_s(X^{\psi^\nu,\rho}_s)\De s+\De B_s\,,\quad X_0^{\psi^\nu,\rho}\sim\rho\,,
\ees
 whereas $\mathrm{Cov}(X^{\psi^\nu,\rho}_0|X^{\psi^\nu,\rho}_s=y)$ is the covariance of the law of this process at initial time conditioned on being in $y$ at time $s$. This can be easily seen by recalling that
\bes
\psi^\nu_s(y)=-\log P_{T-s} e^{-\psi^\nu_T}(y) = -\log\int \exp\left(-\psi^\nu_T(x)-\frac{|x-y|^2}{2(T-s)}\right)\De x+\frac{d}{2}\log(2\pi (T-s))\,,
\ees
and computing the Hessian as done in \cite[Proposition 17]{conforti2023Sinkhorn} for the case $s=T$. 

Similarly, for $\varphi^\nu_s$, for any $s\in[0,T)$ we have 
\be\label{Hess:Cov:phi:s}
\nabla^2\varphi^\nu_s(y)=(T-s)^{-1}-(T-s)^{-2}\mathrm{Cov}(X^{\varphi^\nu,\nu}_T|X^{\varphi^\nu,\nu}_s=y)\,,
\ee
where $(X^{\varphi^\nu,\nu}_s)_{s\in[0,T]}$ is the backward Schr\"odinger process defined as
\bes
\De X^{\varphi^\nu,\nu}_s = -\nabla\varphi^\nu_s(X^{\varphi^\nu,\nu}_s)\De s+\De B_s\,,\quad X^{\varphi^\nu,\nu}_0\sim\nu\,,
\ees
 and $\mathrm{Cov}(X^{\varphi^\nu,\nu}_0|X^{\varphi^\nu,\nu}_s=y)$ is the covariance of the law of this process at initial time conditioned on being in $y$ at time $s$.

Furthermore, let us recall here the following convexity backpropagation result  along Hamilton-Jacobi-Bellman equations (see for instance \cite[Lemma 3.1]{conforti2024weak}) 
\begin{lemma}\label{lemma:backpropagation:convexity}
    Assume that $\nabla^2h\geq \alpha$ for some $\alpha>-T^{-1}$ uniformly. Then if $(h_s)_{s\in[0,T]}$ denotes the solution of 
    \bes
\begin{cases}
    \partial_s u_s+\frac12\Delta u_s-\frac12|\nabla u_s|^2=0\\
    u_s=h
\end{cases}
    \ees
then for any $s\in[0,T]$ we have $\nabla^2h_s\geq (\alpha^{-1}+(T-s))^{-1}$.
\end{lemma}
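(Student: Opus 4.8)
The plan is to reduce the lemma, via the Hopf--Cole transform, to a classical Hessian lower bound for Gaussian convolutions of strongly log-concave densities. First I would set $v_s := e^{-h_s}$ and check by a direct computation that \eqref{eq:HJB} turns into the backward heat equation $\partial_s v_s + \tfrac12\Delta v_s = 0$ with terminal datum $v_T = e^{-h}$; hence $v_s = P_{T-s}e^{-h}$ and, writing $t := T-s$,
\[
h_s(x) = -\log\int_{\bbRD} e^{-h(y) - \frac{|x-y|^2}{2t}}\,\De y + \frac d2\log(2\pi t)\,.
\]
The hypothesis $\alpha > -T^{-1}$ together with $t = T - s \le T$ forces $\alpha + t^{-1} > 0$, so the effective potential $y\mapsto h(y) + \frac{|x-y|^2}{2t}$ is uniformly $(\alpha + t^{-1})$-strongly convex; this guarantees that the integral above is finite and that $h_s$ is smooth for every $s\in[0,T)$ (the case $s=T$ being trivial, as $h_T = h$).

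Next I would fix $x$ and $t = T-s\in(0,T]$ and introduce the probability measure $\mu_x^t$ on $\bbRD$ with density proportional to $e^{-h(y) - |x-y|^2/(2t)}$. Differentiating twice under the integral sign and using that $\{\mu_x^t\}_x$ is an exponential family in the natural parameter $x/t$, one obtains the identity
\[
\nabla^2 h_s(x) = t^{-1}\Id - t^{-2}\,\mathrm{Cov}(\mu_x^t)\,,
\]
which is the general counterpart of \eqref{Hess:Cov:psi:s}. Since $\mu_x^t \propto e^{-V}$ with $\nabla^2 V = \nabla^2 h + t^{-1}\Id \ge (\alpha + t^{-1})\Id > 0$, the Brascamp--Lieb inequality applied to the linear test functions $y\mapsto\langle\theta,y\rangle$ yields $\mathrm{Cov}(\mu_x^t) \le (\alpha + t^{-1})^{-1}\Id$. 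Inserting this estimate,
\[
\nabla^2 h_s(x) \ge t^{-1}\Id - \frac{1}{t^2(\alpha + t^{-1})}\,\Id = \frac{\alpha}{1 + t\alpha}\,\Id = \bigl(\alpha^{-1} + (T-s)\bigr)^{-1}\Id\,,
\]
which is the claim. Alternatively, these last two steps can be collapsed into one by invoking the Prékopa--Leindler/Brascamp--Lieb marginal-convexity inequality directly for $F(x,y) = h(y) + |x-y|^2/(2t)$: its Hessian has $(x,x)$-block $t^{-1}\Id$, $(x,y)$-block $-t^{-1}\Id$, and $(y,y)$-block $\nabla^2 h(y) + t^{-1}\Id > 0$, and the associated Schur complement equals exactly $t^{-1}\Id - t^{-2}(\alpha + t^{-1})^{-1}\Id$.

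The substance of the argument lies entirely in the two standard inputs — the Hopf--Cole linearization and the Brascamp--Lieb covariance bound — so the only points requiring care are the integrability of $P_{T-s}e^{-h}$ and the legitimacy of differentiating under the integral sign. Both are immediate consequences of the uniform strong convexity of the effective potential, which is precisely what the constraint $\alpha > -T^{-1}$ secures: that is the one place where the restriction on $\alpha$ is genuinely used, and where the statement would fail without it. I expect no real obstacle beyond bookkeeping, since this is the back-propagation-of-convexity principle already employed in \cite{conforti2024weak,conforti2023Sinkhorn,chiarini2024semiconcavity}.
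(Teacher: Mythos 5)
Your argument is correct. The Hopf--Cole step is exactly the identification the paper itself makes when introducing the interpolated potentials ($h_s=-\log P_{T-s}e^{-h}$ solves \eqref{eq:HJB}), the integrability and smoothness of $P_{T-s}e^{-h}$ do follow from the $(\alpha+(T-s)^{-1})$-strong convexity of the effective potential (which is where $\alpha>-T^{-1}$ enters, as you say), the Hessian--covariance identity you derive is precisely \eqref{Hess:Cov:psi:s}--\eqref{Hess:Cov:phi:s} in the paper's appendix, and the Brascamp--Lieb bound on the covariance plus the algebra $t^{-1}-t^{-2}(\alpha+t^{-1})^{-1}=(\alpha^{-1}+t)^{-1}$ closes the proof, including the trivial case $s=T$ and the degenerate reading when $\alpha\le 0$. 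The one cosmetic point is your final remark: the Schur complement of the Hessian of $F(x,y)=h(y)+|x-y|^2/(2t)$ equals $t^{-1}\Id-t^{-2}(\nabla^2h(y)+t^{-1}\Id)^{-1}$ and is only \emph{bounded below} by $t^{-1}\Id-t^{-2}(\alpha+t^{-1})^{-1}\Id$; since this is offered as an alternative, it does not affect the proof. Note that the paper does not prove this lemma at all: it is recalled from \cite[Lemma 3.1]{conforti2024weak}, where the propagation of (weak) semiconvexity is obtained by stochastic arguments along the bridge dynamics, which is why that reference can work under weaker regularity than pointwise $C^2$ bounds. Your analytic route (Hopf--Cole, covariance representation, Brascamp--Lieb) is the standard non-stochastic alternative — essentially the mechanism the paper already exploits in Appendix~\ref{sec:app:phi}, where the same covariance identity is combined with cruder covariance bounds (compact support, log-concavity) — and it is perfectly adequate for the lemma as stated, since the hypothesis there is a uniform pointwise Hessian bound.
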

Then, if we assume that there exists some $\alpha>-T^{-1}$ such that $\nabla^2 h\geq \alpha$, the previous result implies that $\nabla^2 h_0\geq (\alpha^{-1}+T)^{-1}$ and hence that the semiconcavity parameter $\Lambda$ of the function $g^y_{h_0}(z)\coloneqq \frac{|z-y|^2}{2}-T\,h_0(z)$ can be bounded by
\begin{equation}\label{eq:general:Lambda0}\begin{aligned}
\Lam{h_0} \leq 1-T\,\lambda(h_0)
\leq 1-\frac{1}{(T\alpha)^{-1}+1}=\frac{1}{1+T\alpha}\,.
\end{aligned}\end{equation}

\subsection{Marginal $\rho$ with compact support}\label{sub:rho:compact}
Clearly if $\supp(\rho)\subseteq B_R(0)$  for some radius $R>0$, then for any $s\in[0,T)$ we have 
$\mathrm{Cov}(X^{\varphi^\nu,\nu}_T|X^{\varphi^\nu,\nu}_s=y)\leq R^2$ since $X^{\varphi^\nu,\nu}_T\sim \rho$ and as a consequence of \eqref{Hess:Cov:phi:s} we can take
\be\label{Lambda:phi:compact}
\lambda(\varphi^\nu_s)= 
(T-s)^{-1}-(T-s)^{-2}\,R^2\,,\text{ and hence }\Lam{\varphi^\nu_0}
=\nicefrac{R^2}{T}
\ee

Next, let us compute $\Cphi$ defined as
\bes
\Cphi\coloneqq T\biggl (\int_0^{T}e^{\int_0^s 2\lambda(\varphi^\nu_t)\De t}\De s\biggr)^{-1} \,.
\ees
This can be easily accomplished since for any $l\leq u< T$ we have
\bes
\begin{aligned}
\SymIpsi{l}{u}{\varphi^\nu}=&\, \int_l^u\exp\biggl(2\int_l^s\lambda(\varphi^\nu_t)\De t\biggr)\De s= \int_l^{u}\exp\biggl(2\int_l^s(T-t)^{-1}-(T-t)^{-2}\,R^2 \De t\biggr)\De s\\
=&\,\int_l^u\exp\biggl([-2\log( T-t)]_l^s-\biggl[\frac{2 R^2}{ T-t}\biggr]_l^s\biggr)\De s\\
    =&( T-l)^2\,e^{\frac{2 R^2}{ T-l}}\int_l^u\frac{e^{-\frac{2 R^2}{ T-s}}}{( T-s)^2}\De s=
        \frac{( T-l)^2}{2 R^2}\biggl(1-e^{\frac{2 R^2}{ T-l}-\frac{2 R^2}{ T-u}}\biggr)\,.
\end{aligned}
\ees
Therefore we have
\be\label{Cphi:compact}
\Cphi\coloneqq T\, \inf_{\delta\in[0.1)}(\SymIpsi{0}{\delta T}{\varphi^\nu})^{-1}=\frac{2R^2}{T}\,\inf_{\delta\in[0,1)}\biggl(1-\exp\biggl(-\frac{\delta}{1-\delta}\,\frac{2R^2}{T}\biggr)\biggr)^{-1}=\frac{2R^2}{T}\,.
\ee

\subsection{Log-concavity of $\rho$}\label{app:rho:logconcave:no:beta}
Let $U_\rho$ denotes the (negative) log-density of the marginal $\rho$ and let us assume that there exists $\alpha_\rho>0$ such that $
 \nabla^2U_\rho\geq \alpha_\rho$. Without loss of generalities, since we are interested in the asymptotics $T\downarrow 0$, we will further assume that $\alpha_\rho<T^{-1}$.

Then, it is well known \cite{conforti2023Sinkhorn} that $\nabla^2\varphi^\nu\geq \alpha_\rho-T^{-1}$ and hence we can take  $
\lambda(\varphi^\nu_T)=\alpha_\rho-T^{-1}$. This is enough to deduce from \Cref{lemma:backpropagation:convexity} that  
\bes
\nabla^2\varphi^\nu_s\geq \frac1{(\alpha_\rho-T^{-1})^{-1}+T-s}
\ees
and hence that we can set
\bes
\lambda(\varphi^\nu_0)=\frac1{(\alpha_\rho-T^{-1})^{-1}+T}=\frac{\alpha_\rho-T^{-1}}{\alpha_\rho\,T}<0\quad\text{ and hence }\Lam{\varphi^\nu_0}=(\alpha_\rho\,T)^{-1}\,,
\ees
and for any $s\in[0,T]$
\bes
\lambda(\varphi^\nu_s)=\frac1{\lambda(\varphi^\nu_0)^{-1}-s}<0 \,.
\ees
We are now ready to compute 
\bes
\Cphi\coloneqq T\biggl (\int_0^{T}e^{\int_0^s 2\lambda(\varphi^\nu_t)\De t}\De s\biggr)^{-1} \,.
\ees
This can be easily accomplished since for any $l\leq u< T$ we have
\bes
\begin{aligned}
\SymIpsi{l}{u}{\varphi^\nu}=&\, \int_l^u\exp\biggl(2\int_l^s\lambda(\varphi^\nu_t)\De t\biggr)\De s= \int_l^{u}\exp\biggl(-2\int_l^s\frac1{t-\lambda(\varphi^\nu_0)^{-1}}\De t\biggr)\\
=&\,\int_l^u\exp\biggl([-2\log(t-\lambda(\varphi^\nu_0)^{-1})]_l^s\biggr)\De s=\int_l^u\frac{(l-\lambda(\varphi^\nu_0)^{-1})^2}{(s-\lambda(\varphi^\nu_0)^{-1})^2}\De s\\
=&\,(l-\lambda(\varphi^\nu_0)^{-1})^2\biggl(\frac1{l-\lambda(\varphi^\nu_0)^{-1}}-\frac1{u-\lambda(\varphi^\nu_0)^{-1}}\biggr)\,.
\end{aligned}
\ees
Therefore we have
\be\label{Cphi:beta:oo}
\Cphi\coloneqq T\, \inf_{\delta\in[0.1)}(\SymIpsi{0}{\delta T}{\varphi^\nu})^{-1}=T\SymIpsi{0}{ T}{\varphi^\nu})^{-1}=(\alpha_\rho\,T)^{-1}\,.
\ee

\bigskip

Let us conclude this appendix with a table summarizing the values of the constants so far computed (up to numerical prefactors).

\medskip

\begin{center}
    
\begin{tabular}{|c|c|c|}
\hline
\textbf{Constant} & $\Lam{\varphi^\nu_0}$ & $\Cphi$ \\
\hline
     $\rho$ compact support &  $R^2\,T^{-1}$ & $R^2\,T^{-1}$\\
     \hline
     $\rho$ log-concave &  $\alpha_\rho^{-1}\,T^{-1}$ &  $\alpha_\rho^{-1}\,T^{-1}$ \\
\hline
\end{tabular}

\end{center}

 \bigskip

\section{Explicit computations for the stability of Hessians}\label{sec:app:psi}

In this section we will compute the constants appearing in Theorem 3.1 and Theorem 3.6 in two specific settings and analyze their behavior w.r.t. the parameters $T,\nu,\rho$ . Hereafter we write $a\lesssim b$ whenever there exists a numerical constant $C>0$ (independent of $T,\nu,\rho$) such that $a\leq C\,b$.
In order to compute the constants appearing in the stability bounds for the Hessian recall that 
 $\Cpsi_{\delta',\delta}$ was introduced as
 \be\label{rel:Cdelta:primo:delta:int}
\Cpsi_{\delta',\delta}=T(\SymIpsi{\delta' T}{\delta T}{\psi^\nu})^{-1}\,.
 \ee
Through this section we always choose 
\be\label{choice:delta}
\delta=\frac1{1+\Lam{\psi^\nu_0}}\,,
\ee
so that 
\be\label{eq:delta:1-delta:Lambda}
\frac\delta{1-\delta}=\frac{1}{\Lam{\psi^\nu_0}}\quad\text{ and }\quad\frac1{1-\delta}=\frac{1+\Lam{\psi^\nu_0}}{\Lam{\psi^\nu_0}}\,.
\ee
Moreover we will pick $\delta'=\delta/2$ so that
\bes
\frac{\delta'}{1-\delta'}=((\delta')^{-1}-1)^{-1}=(\nicefrac2\delta-1)^{-1}=(1+2\Lam{\psi^\nu_0})^{-1}
\ees
Finally, recall that hereafter we choose $\tau_u=\delta T$ and $\tau_\ell=\delta' T$ and note that in general we always have 
\be\label{eq:general:HS}
\begin{aligned}
\gamma_{\tau_u}\coloneqq \sup_{s\in[0,\tau_u]}\sup_{x\in\bbRD}\HS{\nabla^2\psi^\nu_s}\overset{\eqref{Hess:Cov:psi:s}}{\leq} \sup_{s\in[0,\tau_u]} \sqrt{d}\,(T-s)^{-1}=\frac{\sqrt{d}}{T(1-\delta)}=\frac{\sqrt{d}}{T}\,\frac{1+\Lam{\psi^\nu_0}}{\Lam{\psi^\nu_0}}\,.
\end{aligned}\ee

\subsection{Marginal $\nu$ with compact support}
By reasoning as in \Cref{sub:rho:compact}, if $\supp(\nu)\subseteq B_R(0)$ for some radius $R>0$, which we assume to be big enough, \ie, that $R^2\geq T$. 
Then for any $s\in[0,T)$ we have 
$\mathrm{Cov}(X^{\psi^\nu,\rho}_T|X^{\psi^\nu,\rho}_s=y)\leq R^2$ since $X^{\psi^\nu,\rho}_T\sim \nu$ and as a consequence of \eqref{Hess:Cov:psi:s} we can take
\be\label{Lambda:psi:s:compact}
\lambda(\psi^\nu_s)=
(T-s)^{-1}-(T-s)^{-2}\,R^2\,,
\ee
and hence
\be\label{eq:I:compatto}
       \SymIpsi{l}{u}{\psi^\nu}= \frac{( T-l)^2}{2 R^2}\biggl(1-e^{\frac{2 R^2}{ T-l}-\frac{2 R^2}{ T-u}}\biggr)\,,\quad\text{ and we take }\Lam{\psi^\nu_0}
       =\nicefrac{R^2}{T}\,.
\ee
This combined with \eqref{eq:general:HS} already gives
\bes
\gamma_{\tau_u}\leq\frac{\sqrt{d}}{T}\,\frac{1+\Lam{\psi^\nu_0}}{\Lam{\psi^\nu_0}}
=\sqrt{d} (R^{-2}+T^{-1})\leq \frac{2\sqrt{d}}{T}\,.
\ees
Next let us compute the integral constant term appearing in Theorem 3.6, that is the value
\bes
\sup_{s\in[0,\tau_\ell]}\int_s^{\tau_u}\SymIpsi{s}{u}{\psi^\nu}^{-\nicefrac12}\De u \,.
\ees
 In view of that, notice that for any $s\in[0,\tau_\ell]$
\bes
\begin{aligned}
\int_s^{\tau_u}&\SymIpsi{s}{u}{\psi^\nu}^{-\nicefrac12}\De u = \sqrt{2}\,\frac{R}{T-s}\int_s^{\tau_u} \biggl(1-e^{\frac{2 R^2}{ T-s}-\frac{2 R^2}{T-u}}\biggr)^{-\nicefrac12}\De u\\
\leq&\, \sqrt{2}\,\frac{R}{T-s}\int_s^{\tau_u} \biggl(1-e^{-\frac{2 R^2}{(T-s)^2}(u-s)}\biggr)^{-\nicefrac12}\De u
= \frac{T-s}{\sqrt{2}\,R}\,\log\left(\frac{1+\sqrt{1-e^{-\frac{2R^2}{(T-s)^2}(\tau_u-s)}}}{1-\sqrt{1-e^{-\frac{2R^2}{(T-s)^2}(\tau_u-s)}}}\right)\\
\leq&\,\frac{\log 4}{\sqrt{2}}\,\frac{T-s}{R}+ \sqrt{2}\,R\,\frac{\tau_u-s}{T-s} \leq \frac{\log 4}{\sqrt{2}}\,\frac{T}{R}+ \sqrt{2}\,R\,\frac{\tau_u}{T-\tau_\ell}= \frac{\log 4}{\sqrt{2}}\,\frac{T}{R}+ \sqrt{2}\,R\,\frac{2\delta'}{1-\delta'}\\
=& \,\frac{\log 4}{\sqrt{2}}\,\frac{T}{R}+ \frac{2\sqrt{2}\,R}{1+2\Lam{\psi^\nu_0}}=\frac{\log 4}{\sqrt{2}}\,\frac{T}{R}+ \frac{2\sqrt{2}\,T\,R}{T+2\,R^2}\,.
\end{aligned}\ees
Therefore
\be
\sup_{s\in[0,\tau_\ell]}\int_s^{\tau_u}\SymIpsi{s}{u}{\psi^\nu}^{-\nicefrac12}\De u\lesssim \frac{T}{R}+ \frac{T\,R}{T+R^2}\lesssim \nicefrac{T}{R}\,.
\ee

\medskip

Now, let us compute  $C^{\psi^\nu}_{\delta',\delta}$ from \eqref{rel:Cdelta:primo:delta:int} and \eqref{eq:I:compatto}. We have
\bes
\begin{aligned}
      C^{\psi^\nu}_{\delta',\delta}= \frac{2 \,R^2}{T(1-\delta')^2}\biggl(1-e^{\frac{2 R^2}{ T(1-\delta')}-\frac{2 R^2}{ T(1-\delta)}}\biggr)^{-1}=\frac{2 \,R^2}{T(1-\delta')^2}\biggl(1-\exp\biggl(-\frac{ R^2}{ T}\,\frac{\delta}{(1-\delta)(1-\delta')}\biggr)\biggr)^{-1}\\
      \overset{\eqref{eq:delta:1-delta:Lambda}}{=}\frac{2 \,R^2}{T(1-\delta')^2}\biggl(1-\exp\biggl(-\frac{ R^2}{ T}\,\frac1{\Lam{\psi^\nu_0}(1-\delta')}\biggr)\biggr)^{-1}=\frac{2 \,R^2}{T(1-\delta')^2}\biggl(1-\exp\biggl(-\frac1{1-\delta'}\biggr)\biggr)^{-1}\\
      \leq \frac{2 \,R^2}{T(1-\delta)^2}\frac1{1-e^{-1}}=\frac{(1+\Lam{\psi^\nu_0})^2}{\Lam{\psi^\nu_0}^2}\,\frac{R^2}{T}\,\frac{2 }{1-e^{-1}}=(1+\nicefrac{R^2}{T})^2\,\frac{2 }{1-e^{-1}}\lesssim 1+\nicefrac{R^4}{T^2}\leq \nicefrac{R^4}{T^2} \,.
\end{aligned}
\ees
Similarly,  we can compute
\bes
\Cpsi_{\delta}=\Cpsi_{0,\delta}= \frac{2}{1-e^{-1}}\,\frac{R^2}{T}
\ees
Lastly, notice that from $R^2\geq T$ we know that $\lambda(\psi^\nu_s)\leq 0$ and it is monotone decreasing, which yields to
\bes
\begin{aligned}
\bar\lambda_{\tau_\ell}\coloneqq (\inf_{s\in[0,{\tau_\ell}]}\lambda(\psi^\nu_s))^-=-\lambda(\psi^\nu_{\tau_\ell})=\frac{R^2}{T^2(1-\delta')^2}-\frac{1}{T(1-\delta')} \leq \frac{R^2}{T^2(1-\delta')^2}\\
=\frac{R^2}{T^2}\biggl(\frac{1+\Lam{\psi^\nu_0}}{\nicefrac12+\Lam{\psi^\nu_0}}\biggr)^2\leq 4\,\frac{R^2}{T^2}\,.
\end{aligned}
\ees

\medskip

\subsection{Log-concavity of $\nu$}\label{app:nu:logconcave:no:beta}
By reasoning as in \Cref{app:rho:logconcave:no:beta}, if  $U_\nu$ denotes the (negative) log-density of~$\nu$ and we assume that $\nabla^2U_\nu\geq \alpha_\nu$ for some  $\alpha_\nu>0$ (w.l.o.g.\ such that $\alpha_\nu<T^{-1}$) then we can consider
\bes
\lambda(\psi^\nu_s)=\frac1{\lambda(\psi^\nu_0)^{-1}-s} \quad\text{ where }\quad
\lambda(\psi^\nu_0)=\frac1{(\alpha_\psi-T^{-1})^{-1}+T}=\frac{\alpha_\nu-T^{-1}}{\alpha_\nu\,T}\,,
\ees
since for any $s\in[0,T]$ it holds
 \bes
\nabla^2\psi^\nu_s\geq \frac1{(\alpha_\nu-T^{-1})^{-1}+T-s}\,.
\ees
Moreover, this further implies  $\Lam{\psi^\nu_0}=(\alpha_\nu\,T)^{-1}$, and since $\alpha_\nu<T^{-1}$ we are guaranteed that $\lambda(\psi^\nu_s)$ is always negative. This combined with \eqref{eq:general:HS} already gives
\bes
\gamma_{\tau_u}\leq\frac{\sqrt{d}}{T}\,\frac{1+\Lam{\psi^\nu_0}}{\Lam{\psi^\nu_0}}
=\sqrt{d} (\alpha_\nu+T^{-1})\,.
\ees
Next, by reasoning as in \Cref{app:rho:logconcave:no:beta} we have
\be\label{eq:I:log:concave:no:beta}
\SymIpsi{l}{u}{\psi^\nu}=(l-\lambda(\psi^\nu_0)^{-1})^2\biggl(\frac1{l-\lambda(\psi^\nu_0)^{-1}}-\frac1{u-\lambda(\psi^\nu_0)^{-1}}\biggr)=\frac{l-\lambda(\psi^\nu_0)^{-1}}{u-\lambda(\psi^\nu_0)^{-1}}\,(u-l)\,,
\ee
and hence that for any $s\in[0,\tau_\ell]$
\bes
\begin{aligned}
\int_s^{\tau_u}\SymIpsi{s}{u}{\psi^\nu}^{-\nicefrac12}\De u = \int_s^{\tau_u}\sqrt{\frac{u-\lambda(\psi^\nu_0)^{-1}}{s-\lambda(\psi^\nu_0)^{-1}}}\,\frac1{\sqrt{u-s}}\,\De u\leq \sqrt{\frac{\tau_u-\lambda(\psi^\nu_0)^{-1}}{s-\lambda(\psi^\nu_0)^{-1}}}\,\int_s^{\tau_u}\frac1{\sqrt{u-s}}\,\De u\\
=2\sqrt{\frac{\tau_u-\lambda(\psi^\nu_0)^{-1}}{s-\lambda(\psi^\nu_0)^{-1}}}\,\sqrt{\tau_u-s}\leq 2\,\sqrt{\tau_u}\,\sqrt{1-\tau_u\,\lambda(\psi^\nu_0)}=2\sqrt{2}\,\frac{\sqrt{\alpha_\nu}\,T}{1+\alpha_\nu\,T}\,,
\end{aligned}\ees
and hence
\bes
\sup_{s\in[0,\tau_\ell]}\int_s^{\tau_u}\SymIpsi{s}{u}{\psi^\nu}^{-\nicefrac12}\De u\leq\frac{2\sqrt{2}}{\sqrt{\alpha_\nu}}\,.
\ees
Next, from \eqref{rel:Cdelta:primo:delta:int} and \eqref{eq:I:log:concave:no:beta} we may compute  $C^{\psi^\nu}_{\delta',\delta}$ and $\Cpsi_\delta=\Cpsi_{0,\delta}$ as
\bes
\begin{aligned}
      C^{\psi^\nu}_{\delta',\delta}= \frac{8}{\alpha_\nu\,T}\,\frac{1+\alpha_\nu\,T}{3+\alpha_\nu\,T}\leq 8\,(\alpha_\nu\,T)^{-1}\quad\text{ and }\quad \Cpsi_\delta=\frac{2}{\alpha_\nu \,T}\,.
\end{aligned}
\ees
Lastly, notice that  $\lambda(\psi^\nu_s)$ is a negative monotone increasing sequence and hence 
\bes
\begin{aligned}
\bar\lambda_{\tau_\ell}\coloneqq (\inf_{s\in[0,{\tau_\ell}]}\lambda(\psi^\nu_s))^-=-\lambda(\psi^\nu_0)=\frac{T^{-1}-\alpha_\nu}{\alpha_\nu\,T}\,.
\end{aligned}
\ees

\medskip

In the following table we summarize the values of the constants so far computed (up to numerical prefactors).

\begin{center}
\resizebox{.96\textwidth}{!}{ 
    
\begin{tabular}{|c|c|c|c|c|c|c|}
\hline
\textbf{Constant} & $\Lam{\psi^\nu_0}$ &   $C^{\psi^\nu}_{\delta',\delta}$ & $\Cpsi_\delta $ & $\gamma_{\tau_u}$  & $\sup_{s\in[0,\tau_\ell]}\int_s^{\tau_u}\SymIpsi{s}{u}{\psi^\nu}^{-\nicefrac12}\De u$ & $\bar\lambda_{\tau_\ell}$  \\
\hline
     $\nu$ compact support  & $R^2\,T^{-1}$ & $R^4\,T^{-2}$ & $R^2\,T^{-1}$ & $\sqrt{d}\,T^{-1}$ & $T\,R^{-1}$ & $R^2\,T^{-2}$ \\
     \hline
     $\nu$ log-concave &  $\alpha_\nu^{-1}\,T^{-1}$ &  $\alpha_\nu^{-1}\,T^{-1}$ & $\alpha_\nu^{-1}\,T^{-1}$ & $\sqrt{d}(\alpha_\nu+ T^{-1})$ & $\alpha_\nu^{-\nicefrac12}$ & $\alpha_\nu^{-1}T^{-2}-T^{-1}$ \\
\hline
\end{tabular}
}

\end{center}

\medskip

Let us conclude by specifying our stability results of gradients and Hessians from Theorem 3.1 and Theorem 3.6 to the two settings considered above, relying on the explicit computations performed in \Cref{sec:app:phi} and \Cref{sec:app:psi}. This will prove the asymptotic bounds stated in Theorem 1.1 in the main article. Recall that hereafter we write $a\lesssim b$ whenever there exists a numerical constant $C>0$ (independent of $T,\nu,\rho,\mu$) such that $a\leq C\,b$.

\begin{corollary}[Stability of gradients for compactly supported marginals]\label{cor:grad:compact}
Assume \Cref{ass:basic}, that $\scrH(\nu|\Leb)<\infty$, that both $\rho$ and $\nu$ are compactly supported in a ball of radius $R$ (big enough so that $R^2\geq T$) and that either $\mu\ll\nu$ or $\supp(\mu)\subseteq B_R(0)$. Then we have
\bes
\|\nabla\varphi^\nu-\nabla\varphi^\mu \|^2_{\rmL^2(\rho)} \lesssim \frac{R^4}{T^4}\,\bfW_2^2(\mu,\nu)\,.
\ees
\end{corollary}
\begin{proof}
   Firstly, assume $\mu\ll\nu$. Since $\rho$ has compact support, the same computations performed in \Cref{sub:rho:compact} guarantee $\Lam{\varphi^\mu_0}<\infty$ and hence the validity of the gradient estimates from Theorem 3.1. Our choice of $\delta$ in~\eqref{choice:delta} and the following computations yield to $\Crhonu^\delta\lesssim R^4\,T^{-2}$.

   Next, if we assume $\supp(\mu)\subseteq B_R(0)$ instead of $\mu\ll\nu$ we proceed as follows. Consider the probability measure $\nu^\eta\propto\mu+\eta\nu$ and observe that $\nu\ll\nu^\eta$ and $\mu\ll\nu^\eta$. The former implies the applicability of the previous case to the pair $(\nu^\eta,\nu)$, which guarantees $\nabla\varphi^{\nu^\eta}\rightarrow\nabla\varphi^\nu$  in $\rmL^2(\rho)$ as the regularization parameter $\eta$ vanishes. The latter ensures the validity of quantitative stability bound for the pair $(\mu,\nu^\eta)$, uniformly in $\eta>0$, since $\supp(\nu^\eta)\subseteq B_R(0)$ for all $\eta\geq 0$. Thus, it is sufficient to apply the triangle inequality and eventually send $\eta\downarrow 0$ to recover the final stability bound for the original pair $(\mu,\nu)$. 
\end{proof}

\begin{corollary}[Stability of gradients for log-concave marginals]\label{cor:grad:log:conc}
Assume \Cref{ass:basic}, $\scrH(\nu|\Leb)<\infty$  and that both $\rho$ and $\nu$ are log-concave, \ie, that their (negative) log-densities satisfy $\nabla^2U_\rho\geq \alpha_\rho$ and $\nabla^2U_\nu\geq \alpha_\nu$ for some  $\alpha_\rho,\,\alpha_\nu>0$ (w.l.o.g.\ such that $\alpha_\rho\vee\alpha_\nu<T^{-1}$). Then we have
\bes
\|\nabla\varphi^\nu-\nabla\varphi^\mu \|^2_{\rmL^2(\rho)} \lesssim \frac{1}{\alpha_\rho\,\alpha_\nu\,T^4}\,\bfW_2^2(\mu,\nu)\,.
\ees
\end{corollary}

\begin{proof}
   Firstly, let us consider the case where $\mu\ll\nu$. Since $\rho$ has log-concave density, the same computations performed in \Cref{app:rho:logconcave:no:beta} guarantee $\Lam{\varphi^\mu_0}<\infty$ and hence the validity of  the validity of the gradient estimates from Theorem 3.1. Our choice of $\delta$ in~\eqref{choice:delta} and the following computations yield to $\Crhonu^\delta\lesssim \alpha_\rho^{-1}\,\alpha_\nu^{-1}\,T^{-2}$.

  To lift the assumption $\mu\ll\nu$ we argue in the following way. Fix a regularization parameter $\eta>0$ and consider the convolution with the heat kernel $\nu^\eta$. Then, as the (negative) log-density of $\nu^\eta$ satisfies $\nabla^2 U_{\nu^\eta} \geq \frac{\alpha_\nu}{1+\eta\alpha_\nu}$ provided $\nabla^2 U_{\nu} \geq \alpha_\nu$ (see for instance \cite[Theorem 1]{henningsson2006log}), the log-concavity parameter of $\nu^\eta$ converges to the one of $\nu$ as $\eta \downarrow 0$. Since $\nu\ll\nu^\eta$, we can thus apply the previous result and deduce that as $\eta\downarrow 0$ we have
$\nabla\varphi^{\nu^\eta}\rightarrow\nabla\varphi^\nu$ in $\rmL^2(\rho)$. 
     Moreover, since $\nu^\eta\sim\Leb$ and $\mu\ll\Leb$, we can again apply the previous result to the pair $(\mu,\nu^\eta)$. Then, it is enough sending $\eta\downarrow 0$ to recover the final stability bound for the original pair $(\mu,\nu)$.
\end{proof}

Similarly, for the Hessian bounds we have the followings.

\begin{corollary}[Stability of Hessians for compactly supported marginals]\label{cor:hess:compact}
    Assume \Cref{ass:basic}, that $\scrH(\nu|\Leb)<\infty$, that both $\rho$ and $\nu$ are compactly supported in a ball of radius $R$ (big enough so that $R^2\geq T$) and that either $\mu\ll\nu$ or $\supp(\mu)\subseteq B_R(0)$. Then we have
    \bes 
    \| \nabla^2 \varphi^\mu -  \nabla^2 \varphi^\nu\|_{\rmL^1(\rho)} \lesssim (\nicefrac{R^4}{T^{\nicefrac72}}+\nicefrac{d}{T})\,\bfW_2(\mu,\nu)+
   \nicefrac{R^6}{T^5}\,\bfW_2^2(\mu,\nu)\,, 
    \ees
\end{corollary}
\begin{proof} Firstly, we prove this result in the case $\mu\ll\nu$. Since $\rho$ has compact support, the same computations performed in \Cref{sub:rho:compact} guarantee $\Lam{\varphi^\mu_0}<\infty$ and hence the validity of  of our general stability estimates for the Hessian of Schr\"odinger potentials  from Theorem 3.6. Our computations yield to 
    \bes
\Kappadel\lesssim \nicefrac{R^6}{T^3}\quad \text{ and }\quad A\lesssim \nicefrac{R^4}{T^{\nicefrac72}}+\nicefrac{d}{T}\,.
    \ees
    The proof of this result when considering the assumption $\supp(\mu)\subseteq B_R(0)$ (instead of $\mu\ll\nu$) can be obtained via the same regularization procedure described in the proof of~\Cref{cor:grad:compact} and for this reason we omit it here.
\end{proof}

\begin{corollary}[Stability of Hessians for log-concave marginals]\label{cor:hess:log:conc}
    Assume \Cref{ass:basic}, $\scrH(\nu|\Leb)<\infty$  and that both $\rho$ and $\nu$ are log-concave, \ie, that their (negative) log-densities satisfy $\nabla^2U_\rho\geq \alpha_\rho$ and $\nabla^2U_\nu\geq \alpha_\nu$ for some  $\alpha_\rho,\,\alpha_\nu>0$ (w.l.o.g.\ such that $\alpha_\rho\vee\alpha_\nu<T^{-1}$). Then we have
    \bes 
    \| \nabla^2 \varphi^\mu -  \nabla^2 \varphi^\nu\|_{\rmL^1(\rho)} \lesssim \biggl(\frac1{\alpha_\nu\,\sqrt{\alpha_\rho}\,T^3}+\frac{d}{\sqrt{\alpha_\rho\,\alpha_\nu}\,T^2}\biggr)\,\bfW_2(\mu,\nu)+
   \frac1{\alpha_\rho\,\alpha_\nu\,T^4}\,\bfW_2^2(\mu,\nu)\,, 
    \ees
\end{corollary}
\begin{proof}
We prove this result under the additional assumption $\mu\ll\nu$. The general result can be obtained following the same regularization procedure considered in the proof of \Cref{cor:grad:log:conc}. Since $\rho$ is log-concave, the same computations performed in \Cref{app:rho:logconcave:no:beta} guarantee $\Lam{\varphi^\mu_0}<\infty$ and hence the validity of our general stability estimates for the Hessian of Schr\"odinger potentials from Theorem 3.6. Our computations yield to  
    \bes
\Kappadel\lesssim \frac1{\alpha_\rho\,\alpha_\nu\,T^2}\quad \text{ and }\quad A\lesssim \frac1{\alpha_\nu\,\sqrt{\alpha_\rho}\,T^3}+\frac{d}{\sqrt{\alpha_\rho\,\alpha_\nu}\,T^2}\,.
    \ees
\end{proof}

\bibliographystyle{alpha}

\bibliography{arxiv-biblio-final.bbl}

\end{document}